\documentclass[11pt,a4paper]{amsart}

\usepackage[T1]{fontenc}
\usepackage{lmodern}
\usepackage[utf8]{inputenc}
\usepackage[english]{babel}
\usepackage[a4paper,hmargin=2.4cm,vmargin=2cm]{geometry}

\usepackage{tikz}
\usetikzlibrary{arrows.meta} 
\usetikzlibrary{calc}

\usepackage[backend=biber,
            style=numeric,
            sorting=nyt]{biblatex}
\usepackage{microtype}

\usepackage{amsmath,amssymb,amsthm,mathtools}
\usepackage{enumitem}

\usepackage{xcolor}
\definecolor{LinkBlue}{HTML}{1F3A5F}
\usepackage[
    colorlinks=true,
    linkcolor=LinkBlue,
    citecolor=LinkBlue,
    urlcolor=LinkBlue,
    hypertexnames=false
]{hyperref}

\usepackage{fancyhdr}
\fancypagestyle{plain}{%
  \fancyhf{}%
}



\theoremstyle{plain}
\newtheorem{theorem}{Theorem}[section]
\newtheorem{lemma}[theorem]{Lemma}
\newtheorem{proposition}[theorem]{Proposition}
\newtheorem{corollary}[theorem]{Corollary}

\theoremstyle{definition}

\newtheoremstyle{remarkstyle}
  {1.2\topsep}{1.2\topsep}   
  {\normalfont\small}        
  {}                         
  {\itshape}                 
  {.}{.5em}{}

\theoremstyle{remarkstyle}
\newtheorem{remark}[theorem]{Remark}

\theoremstyle{plain}
\newtheorem{theoremA}{Theorem}

\newtheorem{theoremB}{Theorem}

\newtheorem{theoremC}{Theorem}

\newtheorem{theoremD}{Theorem}

\makeatletter
\def\orig@section{\@startsection{section}{1}%
  \z@{.7\linespacing\@plus\linespacing}{.5\linespacing}%
  {\normalfont\bfseries\centering}}

\def\section{%
  \@ifstar{\uc@sect@star}{\uc@sect@nostar}}

\newcommand{\uc@sect@star}[1]{%
  \orig@section*{\MakeUppercase{#1}}}

\newcommand{\uc@sect@nostar}{%
  \@ifnextchar[{\uc@sect@opt}{\uc@sect@plain}}

\def\uc@sect@opt[#1]#2{%
  \orig@section[#1]{\MakeUppercase{#2}}}

\newcommand{\uc@sect@plain}[1]{%
  \orig@section[#1]{\MakeUppercase{#1}}}
\makeatother

\newcommand{\R}{\mathbb{R}}

\newcommand{\HH}{\mathbb{H}}
\newcommand{\Sph}{\mathbb{S}}

\DeclareMathOperator{\Ric}{Ric}

\title{Congruence theorems for capillary hypersurfaces and a substatic
Heintze--Karcher inequality}

\author[F. Guimar\~aes]{Felippe Guimar\~aes}
\address{Departamento de Matem\'atica,
         Universidade Federal do Rio de Janeiro,
         Av.\ Athos da Silveira Ramos, 149, Bloco C,
         21941-909 Rio de Janeiro, Brazil}
\email{felippe@im.ufrj.br}

\subjclass[2020]{Primary 53C42, 53C24; Secondary 53C21, 35J25}
\keywords{Capillary hypersurface, congruence theorem, Reilly's formula, Heintze--Karcher inequality, substatic manifold}

\date{\today}

\begin{document}

\begin{abstract}
We prove congruence theorems for capillary hypersurfaces of Euclidean space supported on a hyperplane or on a sphere, extending a result of Ros for closed hypersurfaces. We also establish a Heintze--Karcher type inequality for capillary hypersurfaces in substatic manifolds.
\end{abstract}

\maketitle
\thispagestyle{empty}

\section{Introduction}

An interesting problem in submanifold theory is to study the uniqueness of
an isometric immersion $\psi\colon \Sigma^n\to\R^{n+p}$ of a Riemannian manifold,
that is, when $\psi$ is unique up to isometries of the ambient space, in which
case the two immersions are said to be congruent. We say that $\psi$ is
\emph{isometrically rigid} if every other isometric immersion of $\Sigma^n$ into
$\R^{n+p}$ is congruent to it. Locally, the non-degeneracy of the second fundamental form gives information
about rigidity. A classical result in this direction, stated by Beez and proved
by Killing~\cite{KillingRigidity}, asserts that a hypersurface whose second
fundamental form has rank at least three at every point is rigid (we refer to
Chapter~4 in~\cite{DajczerTojeiroBook} for a discussion). In the global setting, additional hypotheses are usually needed. For instance, Sacksteder~\cite{SackstederRigidity}
proved that a compact hypersurface of Euclidean space is isometrically
rigid, provided its totally geodesic points do not disconnect it (see also \cite{DajczerGromoll85b,FloritGuimaraes20}). We point out that some of the techniques behind these
results, more precisely the flat bilinear forms introduced by
Moore~\cite{Moore77}, require the hypersurface to have dimension at least three
and the isometric immersion to be of relatively low codimension, as can be seen
from the \emph{Main Lemma} in~\cite{DajczerTojeiroBook}.

A global rigidity result of a different nature is the following classical
statement about curves due to Schur~\cite{Schur21}, which is also one of the motivations for this work.

\begin{theorem}[\cite{Schur21}]\label{thm:schur}
Let $\gamma$ be a smooth closed convex plane curve and let $\tilde\gamma$
be a smooth closed space curve of the same length, both parametrised by
arclength. If the curvature of $\tilde\gamma$ is at most the curvature of
$\gamma$ at corresponding values of arclength, then the two curves are
congruent.
\end{theorem}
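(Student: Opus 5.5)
The plan is to use the classical chord comparison: show that one fixed chord of $\tilde\gamma$ is at least as long as the corresponding chord of $\gamma$, and then force equality through the unit tangents. Let $L$ be the common length, write $\kappa$ and $\tilde\kappa$ for the curvatures, and let $T(s)=\gamma'(s)$, $\tilde T(s)=\tilde\gamma'(s)$ be the unit tangents.

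First fix the base point. Since $\gamma$ is a closed convex plane curve, its tangent angle $\theta(s)$ is monotone, with total turning $\int_0^L\kappa\,ds=2\pi$. Choose $s_0$ so that the tangent at $s_0$ is antiparallel to the tangent at $s_0+L/2$ in the following sense: there is a parameter $s_1$ with $\int_{s_0}^{s_1}\kappa=\pi$. A cleaner choice is to pick $u$ with $\theta(u)$ equal to the average of $\theta$ over the relevant range. Concretely, pick $u\in[0,L)$ such that $\int_u^{u+L/2}$ and $\int_{u-L/2}^{u}$ of $\kappa$ are each $\le\pi$, or more simply choose $u$ so that the angle $\theta(s)-\theta(u)\in[-\pi,\pi]$ for all $s\in[u-L/2,u+L/2]$. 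Such a $u$ exists by an intermediate-value argument applied to $\int_{u}^{u+L/2}\kappa-\pi$, which changes sign as $u$ moves by $L/2$. Reparametrise so that $u=0$, with $s\in[-L/2,L/2]$.

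Next, for the space curve, set $e=\tilde T(0)$. The spherical distance on $\mathbb S^2$ satisfies $d(\tilde T(s),e)\le\int_0^{|s|}\tilde\kappa\le\int_0^{|s|}\kappa=|\theta(s)-\theta(0)|\le\pi$, because $\tilde T$ is a curve on the unit sphere of speed $\tilde\kappa$. Since cosine is decreasing on $[0,\pi]$, this gives $\langle\tilde T(s),e\rangle\ge\cos(\theta(s)-\theta(0))=\langle T(s),T(0)\rangle$. Integrating over $[-L/2,L/2]$ yields
\[
0=\Big\langle\int_{-L/2}^{L/2}\tilde T\,ds,e\Big\rangle\ge\int_{-L/2}^{L/2}\langle T(s),T(0)\rangle\,ds=0,
\]
where both integrals vanish because the curves are closed. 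Hence equality holds pointwise, so $d(\tilde T(s),e)=\int_0^{|s|}\tilde\kappa=\int_0^{|s|}\kappa$ for all $s$.

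For rigidity, equality in $\int_0^{|s|}\tilde\kappa\le\int_0^{|s|}\kappa$ at every $s$ gives $\tilde\kappa\equiv\kappa$. Equality in the spherical length bound means $\tilde T|_{[0,L/2]}$ and $\tilde T|_{[-L/2,0]}$ are minimising great-circle arcs from $e$, as long as the distance stays below $\pi$. Each half of $\tilde\gamma$ is then planar with curvature $\kappa$. Finally, one must show the two halves lie in the same plane, and this gluing is \textbf{the main obstacle}. If one of the arcs reaches antipodal distance $\pi$ only at an endpoint, the two great circles could a priori differ, and smoothness of $\tilde T$ at $0$ alone does not force them to agree. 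I would handle this by rerunning the argument with a perturbed base point $u'$ near $u$; such a $u'$ still satisfies the angle condition in the generic case, and the resulting planes must coincide on overlaps. Alternatively, I would use closure: $\int\tilde T=0$ with $\tilde T$ lying on two great semicircles through $e$ forces them to be coplanar. Once both halves are known to lie in one plane with the same signed curvature $\kappa$, the fundamental theorem of plane curves gives that $\tilde\gamma$ is congruent to $\gamma$.
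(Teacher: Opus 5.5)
The paper gives no proof of this statement. It is quoted from Schur only as motivation, so there is nothing in the paper to compare your argument with, and it has to stand on its own.

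Your main line is correct:
\begin{itemize}
\item the choice of $u$ with $\int_u^{u+L/2}\kappa=\pi$, which forces $\int_{u-L/2}^{u}\kappa=\pi$ as well;
\item the spherical comparison $d(\tilde T(s),e)\le\int\tilde\kappa\le\int\kappa=|\theta(s)-\theta(0)|\le\pi$;
\item the observation that closedness makes both integrated sides vanish, so no chord comparison is actually needed.
\end{itemize}
Together these give $\tilde\kappa\equiv\kappa$. They also show that on $[0,L/2]$ and on $[-L/2,0]$ the tangent $\tilde T$ runs monotonically along a great semicircle from $e$ to $-e$, with initial directions $w_\pm\perp e$. One notational fix: for $s<0$ the integrals should be over $[s,0]$, not $[0,|s|]$.

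You are right that the gluing needs an argument. Matching one-sided derivatives of $\tilde T$ at $s=0$ and $s=\pm L/2$ only gives $\kappa(0)(w_++w_-)=0$ and $\kappa(L/2)(w_++w_-)=0$. This says nothing if $\kappa$ vanishes at both points, which can happen for a convex curve.

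\emph{Drop the perturbation remedy.} The condition $\int_{u'}^{u'+L/2}\kappa=\pi$ is not stable under moving $u'$, since its zeros can be isolated. So there is no reason a nearby base point is admissible.

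\emph{The closure remedy works, and gives more than coplanarity.} Put $\phi(s)=\int_0^s\kappa$. The equality case gives $\tilde T(s)=\cos\phi(s)\,e+\sin|\phi(s)|\,w_\pm$ for $\pm s\ge0$. The $e^\perp$-component of $0=\int_{-L/2}^{L/2}\tilde T\,ds$ is then $a_+w_++a_-w_-$, where
$a_+=\int_0^{L/2}\sin\phi\,ds>0$ and $a_-=\int_{-L/2}^{0}\sin|\phi|\,ds>0$,
because $|\phi|$ moves continuously from $0$ to $\pi$ on each half. Since $w_\pm$ are unit vectors, this forces $w_-=-w_+$. Hence $\tilde T=\cos\phi\,e+\sin\phi\,w_+$ on all of $[-L/2,L/2]$. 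This is exactly the form $T=\cos\phi\,T(0)+\sin\phi\,n(0)$ of the plane curve, and integrating gives the congruence directly.

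This is what your last sentence ("same signed curvature $\kappa$") really needs. Mere coplanarity would still allow $w_-=w_+$, in which case the signed curvature is $-\kappa$ on one half; the closure computation rules this out.

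For comparison, there is a shorter route. Convexity gives $\int\kappa=2\pi$, so $\int\tilde\kappa\le2\pi$. The equality case of Fenchel's theorem then forces $\tilde\gamma$ to be a planar convex curve with $\tilde\kappa\equiv\kappa$, and the fundamental theorem of plane curves finishes. Your argument, once completed as above, is a self-contained proof of the part of that equality case you need.
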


An extension of Theorem~\ref{thm:schur} to hypersurfaces was obtained by
Ros~\cite{Ros88}. There the curvature of a curve is replaced by the mean curvature
of a hypersurface, the hypothesis on the length by the requirement that both
immersions induce the same metric, and the proof relies on integral inequalities
derived from Reilly's formula. The convexity of the comparison
object becomes mean convexity, and the conclusion is actually stronger than isometric
rigidity, since the codimension of the second immersion is arbitrary and the
theorem also provides a reduction of the codimension.

\begin{theorem}[Theorem 2 in~\cite{Ros88}]\label{thm:ros}
Let $\psi\colon \Sigma^n\to\R^{n+1}$ be a compact embedded hypersurface whose mean
curvature $H_\Sigma$, with respect to the outward normal, is nonnegative. If
$\psi'\colon \Sigma^n\to\R^{n+p}$, $p\geq 1$, is an isometric immersion with mean
curvature vector $\vec H'$ satisfying $|\vec H'|\leq H_\Sigma$ everywhere, then
$\psi'=\iota\circ\psi$ for a totally geodesic inclusion
$\iota\colon\R^{n+1}\to\R^{n+p}$. In particular $\psi$ and $\psi'$ are
congruent.
\end{theorem}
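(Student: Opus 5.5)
Let $\Omega\subset\R^{n+1}$ be the compact domain bounded by $\psi(\Sigma)$. The plan is to show that the coordinate functions of $\psi'$ satisfy the equality case of a Reilly-type integral inequality on $\Omega$. We compare two integrals of $\sum_\alpha|\nabla\psi'^\alpha|^2=n$ against $\int_\Sigma H_\Sigma$ and the volume of $\Omega$.

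\textbf{Step 1: a lower bound from $\psi'$.} Since $\psi'$ is isometric, $\Delta_\Sigma\psi'=n\vec H'$ (here $\vec H'$ is the normalized mean curvature vector). Write $\langle\cdot,\cdot\rangle$ for the inner product of $\R^{n+p}$. Using $|\nabla\psi'|^2=n$ and integrating by parts, we get $n|\Sigma|=-\int_\Sigma\langle\psi',\Delta_\Sigma\psi'\rangle=-n\int_\Sigma\langle\psi',\vec H'\rangle$. Hence $|\Sigma|\le\int_\Sigma|\psi'||\vec H'|\le\int_\Sigma|\psi'|H_\Sigma$. This is too crude, since $|\psi'|$ is not controlled. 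Instead, I will work with each coordinate $f_\alpha=\psi'^\alpha$ separately.

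\textbf{Step 2: Reilly on $\Omega$ (the key step).} For each $\alpha$, let $u_\alpha$ be the harmonic extension of $f_\alpha$ to $\Omega$. Reilly's formula for a harmonic $u$ with boundary values $f$ reads
\[
0=\int_\Omega|D^2u|^2+\int_\Sigma\Bigl(2u_\nu\Delta_\Sigma f+nH_\Sigma u_\nu^2+\mathrm{II}(\nabla f,\nabla f)\Bigr),
\]
with the sign conventions adjusted so that $H_\Sigma\ge0$ for the outward normal. Summing over $\alpha$ and using $\sum_\alpha\mathrm{II}(\nabla f_\alpha,\nabla f_\alpha)=\operatorname{tr}\mathrm{II}=nH_\Sigma$ (because $\sum_\alpha df_\alpha\otimes df_\alpha$ is the metric), we obtain
\[
\int_\Sigma nH_\Sigma=-\sum_\alpha\int_\Omega|D^2u_\alpha|^2-\sum_\alpha\int_\Sigma\Bigl(2n\,u_{\alpha,\nu}H'^\alpha+nH_\Sigma u_{\alpha,\nu}^2\Bigr).
\]
Set $w=\sum_\alpha u_{\alpha,\nu}e_\alpha$. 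By Cauchy--Schwarz and $|\vec H'|\le H_\Sigma$,
\[
-2\langle w,\vec H'\rangle-H_\Sigma|w|^2\le 2|w|H_\Sigma-H_\Sigma|w|^2\le H_\Sigma .
\]
This gives $\int_\Sigma H_\Sigma\le\int_\Sigma H_\Sigma-\frac1n\sum_\alpha\int_\Omega|D^2u_\alpha|^2$. Therefore all the Hessians vanish, and equality holds pointwise.

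\textbf{Step 3: conclusion.} Since $D^2u_\alpha=0$, each $u_\alpha$ is affine. So $U=(u_\alpha)\colon\R^{n+1}\to\R^{n+p}$ is an affine map with $U\circ\psi=\psi'$. Tangentially, $dU$ restricted to $T\Sigma$ is an isometry. Equality in the Cauchy--Schwarz step, wherever $H_\Sigma>0$, forces $|w|=1$ and $w=-\vec H'/|\vec H'|$, which is orthogonal to $d\psi'(T\Sigma)$. So at such points $dU$ is a linear isometry $\R^{n+1}\to\R^{n+p}$. Because $dU$ is constant, it suffices that $H_\Sigma>0$ at one point, which holds at the point of $\Sigma$ farthest from the origin. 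Thus $U=\iota$ is a totally geodesic isometric inclusion, and $\psi'=\iota\circ\psi$.

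\textbf{Main obstacle.} I expect Step 2 to be the hardest part. The issues are getting the signs in Reilly's formula right for the outward normal, and checking that the pointwise quadratic bound is valid where $H_\Sigma=0$: there $\vec H'=0$ and the integrand is $0$, so the bound still holds. Regularity of the harmonic extensions up to the smooth boundary is standard.
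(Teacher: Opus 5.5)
Your proposal is correct and follows essentially the same route as the paper's treatment of this result, namely Ros's argument as reproduced in the proof of Theorem~A: harmonic extension of the coordinates of $\psi'$, Reilly's formula, the pointwise bound $2\langle w,\vec H'\rangle+H_\Sigma|w|^2+H_\Sigma\geq H_\Sigma(|w|-1)^2$, vanishing of the Hessians, and orthogonality of the resulting affine map checked at a point where $H_\Sigma>0$. The only differences are cosmetic: you use normalized mean curvatures, and you take the point of $\Sigma$ farthest from the origin to get $H_\Sigma>0$, which is the natural choice in the closed case (the capillary proof of Theorem~A uses a height-function argument instead).
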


The same scheme was carried out in the hemisphere by Freitas and
Guimar\~aes~\cite{FreitasGuimaraes25}, where a codimension reduction and
congruence theorem is obtained for compact hypersurfaces of $\Sph^{n+1}_+$
admitting a mean convex isometric embedding, using the generalization of
Reilly's formula for space forms due to Qiu and Xia~\cite{QiuXia15}.

In all of the above the manifold $\Sigma^n$ is closed. Our
first purpose is to address the corresponding question about global rigidity of
mean convex hypersurfaces when $\Sigma^n$ has nonempty boundary lying on a fixed
umbilical support hypersurface of the Euclidean space, meeting it at a constant
contact angle $\theta$. Such immersions are called $\theta$-capillary, and they
arise as critical points of an energy functional in the presence of the support
(see~\cite{Finn86,RosSouam97}), the case $\theta=\frac{\pi}{2}$ being known as free
boundary. We write $\Gamma=\partial\Sigma$ and identify an embedded
hypersurface with its image.

Our first result extends Theorem~\ref{thm:ros} to capillary hypersurfaces
supported on a hyperplane, without restriction on the codimension of the
second immersion.

\begin{theoremA}\label{thmA}
Let $\psi\colon \Sigma^n\to\R^{n+1}$ be a compact embedded hypersurface,
$\theta$-capillary in a hyperplane $S$ with $\theta\in(0,\frac{\pi}{2}]$ and
$\Sigma\cap S=\Gamma$, with mean curvature $H_\Sigma\geq 0$. Let
$\psi'\colon \Sigma^n\to\R^{n+p}$ be an isometric immersion with mean curvature
vector $\vec H'$ satisfying $|\vec H'|\leq H_\Sigma$, such that
$\psi'(\Gamma)\subset\hat S$ for some hyperplane $\hat S\subset\R^{n+p}$
and $\psi'$ is $\theta$-capillary in $\hat S$ with the same angle $\theta$.
Then $\psi'$ differs from $\psi$ by a rigid motion.
\end{theoremA}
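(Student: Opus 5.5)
The plan is to adapt Ros's Reilly-formula argument (Theorem~\ref{thm:ros}) to the region enclosed by $\Sigma$ and the support hyperplane. Since $\Sigma$ is embedded, $\theta$-capillary in $S$, and $\Sigma\cap S=\Gamma$, the union $\Sigma\cup T$, where $T\subset S$ is the flat domain bounded by $\Gamma$, bounds a compact domain $\Omega\subset\R^{n+1}$ with piecewise smooth boundary. The corner along $\Gamma$ has interior angle $\theta\le\frac{\pi}{2}$. Translate and rotate $\psi'$ so that $\hat S$ passes through the origin, and let $\hat e$ be its unit normal, chosen to point to the side on which $\psi'(\Sigma)$ meets $\hat S$ at angle $\theta$. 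The comparison quantity is intrinsic to $(\Sigma,g)$ and built from $\psi'$. On $\Sigma$ one has $\Delta_\Sigma\psi'=n\vec H'$, hence
\[
\tfrac12\Delta_\Sigma|\psi'|^2=n+n\langle\psi',\vec H'\rangle,\qquad \Delta_\Sigma\langle\psi',\hat e\rangle=n\langle\vec H',\hat e\rangle .
\]
The boundary terms on $\Gamma$ come from the conormal $\mu'$ of $\psi'$. Capillarity gives $\mu'=\sin\theta\,\hat e$-component plus $\cos\theta$ times the conormal of $\psi'(\Gamma)$ inside $\hat S$, with the sign fixed by the convention. Since $\langle\psi',\hat e\rangle=0$ on $\Gamma$, I expect the combination
\[
u=\tfrac12|\psi'|^2+\cot\theta\,\langle\psi',\hat e\rangle\quad(\text{or the analogous }\cos\theta\text{-weighted version})
\]
to give a capillary Minkowski-type identity on $\Sigma$. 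In this identity the $\Gamma$-terms are expressed through $\theta$ and $|\Gamma|$ alone, and are therefore the same for $\psi$ and $\psi'$. Doing the same for $\psi$ yields the capillary Minkowski formula for $\Sigma\subset\R^{n+1}$.

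Next I solve a mixed boundary problem in $\Omega$. Let $f$ satisfy $\bar\Delta f=c$ in $\Omega$ and $f=u$ on $\Sigma$, with a Neumann condition $\partial_N f=\text{const}$ on $T$ chosen to match the capillary boundary term; the constant $c$ is fixed by the divergence theorem. I then apply Reilly's formula on $\Omega$. On $T$ the second fundamental form vanishes, so the $T$-terms reduce to boundary-divergence expressions that integrate to quantities on $\Gamma$. On $\Sigma$ one gets the terms
\[
\int_\Sigma H_\Sigma\,(f_\nu)^2+2\int_\Sigma f_\nu\,\Delta_\Sigma u+\int_\Sigma \mathrm{II}(\nabla u,\nabla u).
\]
Using $\bar\Delta f=c$, Cauchy--Schwarz $|\bar\nabla^2 f|^2\ge \frac{c^2}{n+1}$, and the hypothesis $n|\langle\psi',\vec H'\rangle|\le n|\psi'|\,|\vec H'|\le n|\psi'|H_\Sigma$, the goal is the following pair of estimates. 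First, an integral inequality in one direction from Reilly. Second, the reverse inequality from the two Minkowski identities together with $|\vec H'|\le H_\Sigma$; this is completed by an AM--GM step $2ab\le H a^2+b^2/H$ on the set where $H_\Sigma>0$, exactly as in Ros. Equality must then hold throughout. This forces $\bar\nabla^2 f=\frac{c}{n+1}\bar g$, so $f$ is a quadratic $\frac{c}{2(n+1)}|x-x_0|^2+\text{affine}$. It also forces $\vec H'$ to be parallel to $\psi'$ with $|\vec H'|=H_\Sigma$, and $u$ to coincide with the restriction of the corresponding function built from $\psi$. Hence $|\psi'(x)-\psi'(y)|$ and $|\psi(x)-\psi(y)|$ agree after polarizing, by varying the base point or repeating the argument with translated origins. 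From this, $\psi'=\iota\circ\psi$ up to a rigid motion.

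The main obstacle is the corner along $\Gamma$. Reilly's formula must be justified on $\Omega$, which is only Lipschitz at $\Gamma$, and the mixed Dirichlet--Neumann solution $f$ has limited regularity there. I would first use that $\theta\le\frac{\pi}{2}$ gives $f\in W^{2,2}$ near the corner, and approximate $\Omega$ by smooth domains, checking that the corner contributions vanish in the limit. The assumption $\theta\le\frac{\pi}{2}$ is exactly what keeps these terms signed or zero. The second delicate point is the choice of the weight in $u$ and of the Neumann data on $T$: the $\Gamma$-integrals from both the Minkowski identity and Reilly's formula must cancel, and this works only because both immersions have the same angle $\theta$.
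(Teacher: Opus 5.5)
There is a genuine gap. Reducing to one scalar Dirichlet datum $u=\tfrac12|\psi'|^2+\cot\theta\langle\psi',\hat e\rangle$ throws away the mechanism that makes Ros's congruence argument work, and the inequality chain you outline cannot exist.

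\emph{Three failures.}

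First, with $f|_\Sigma=u$ the $\Sigma$-integrand in Reilly's formula contains $h^\Sigma(\nabla^\Sigma u,\nabla^\Sigma u)$. This term has no sign when $\Sigma$ is only mean convex, and it is coupled to the unknown $f_\nu$. In the Heintze--Karcher argument this term is absent only because the datum there is $0$.

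Second, the chain cannot close. The hypotheses hold for $\psi'=\psi$, so a chain of the form ``Reilly one way, Minkowski plus $|\vec H'|\le H_\Sigma$ the other way'' would have to be an equality throughout in that case. That includes your step $|\langle\psi',\vec H'\rangle|\le|\psi'|\,|\vec H'|$, and you state its equality case as a conclusion. It would force $\psi(x)$, measured from an arbitrary origin on $S$, to be parallel to $\nu$ wherever $H_\Sigma>0$. This is false for every non-spherical admissible $\Sigma$, and even for a hemisphere when the origin is not its centre.

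Third, the boundary claim is false for $\theta<\pi/2$. On $\Gamma$ one gets $\partial_\mu u=\cos\theta\bigl(\langle\psi',\bar\nu'\rangle\pm1\bigr)$, where $\bar\nu'$ is the unit $\hat S$-component of $\psi'_*\mu$. The integral $\int_\Gamma\langle\psi',\bar\nu'\rangle\,ds$ is extrinsic: for $p=1$ it is $n$ times the volume enclosed by $\psi'(\Gamma)$ in $\hat S$, and for $p>1$ nothing is enclosed.

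Even granting equality, a single identity $u=q\circ\psi$ with $q$ quadratic does not give $|\psi'(x)-\psi'(y)|=|\psi(x)-\psi(y)|$. Moving the origin off $\hat S$ also destroys the boundary structure you relied on.

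\emph{The missing idea} is to keep all of $\psi'$. Extend each coordinate separately by solving $\bar\Delta F_i=0$ in $\Omega$, $F_i=\psi'_i\circ\psi^{-1}$ on $\Sigma$, $\partial_NF_i=\hat N_i$ on $T$, and sum Reilly's formula over $i$.
\begin{itemize}
\item Since $\psi'$ is isometric, $\sum_ih^\Sigma(\nabla^\Sigma z_i,\nabla^\Sigma z_i)=H_\Sigma$ and $\sum_iu_i\Delta_\Sigma z_i=\langle\mathbf u,\vec H'\rangle$ with $\mathbf u=F_*\nu$. After one integration by parts, the $\Sigma$-integrand is at least $H_\Sigma(|\mathbf u|-1)^2$.
\item On the flat region $T$ with constant Neumann data, everything reduces to the corner term $\int_\Gamma\langle\hat N,F_*\bar\nu\rangle\,ds$.
\item The total corner residue $-\langle\mathbf u,\psi'_*\mu\rangle+\langle\hat N,F_*\bar\nu\rangle$ vanishes. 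To see this, expand $\bar\nabla F_i$ in the bases $\{\nu,\mu\}$ and $\{N,\bar\nu\}$ via the capillary change of basis, and use $|\psi'_*\mu|=|\hat N|=1$ and $\langle\hat N,\psi'_*\mu\rangle=\sin\theta$. This is exactly where the equal angles enter.
\item Hence $\bar\nabla^2F=0$, so $F=Bx+b$. At a point with $H_\Sigma>0$ (one exists, by harmonicity of the height function), $|B\nu|=1$ and $B\nu\parallel\vec H'\perp B(T_p\Sigma)$. Together these make $B$ a linear isometry.
\end{itemize}
No Minkowski identity, AM--GM step or polarization is needed.
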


Observe that there are no compact minimal hypersurfaces satisfying the hypotheses of
the theorem above. For a spherical support, free boundary minimal hypersurfaces have attracted considerable interest (see~\cite{FraserSchoen2011,LiFreeBoundarySurvey2020} and references therein), and adapting the techniques of \cite{Ros88} we obtain isometric rigidity in this class.

\begin{theoremB}\label{thmB}
Let $\psi\colon \Sigma^n\to\R^{n+1}$ be a compact embedded minimal hypersurface with
$\psi(\Sigma)\subset\overline{\mathbb{B}}^{\,n+1}$ and free boundary in $\Sph^n$,
and set $\Gamma=\partial\Sigma$. Assume that each connected component of
$\Gamma$ bounds a mean convex domain of $\Sph^n$ contained in an open
hemisphere, and that these domains are pairwise disjoint. If
$\psi'\colon \Sigma^n\to\R^{n+1}$ is a minimal isometric immersion with free
boundary in $\Sph^n$, then $\psi'$ differs from $\psi$ by a rigid motion.
\end{theoremB}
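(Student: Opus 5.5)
The plan is to reduce the statement to a congruence problem for the boundary inside $\Sph^n$, and then propagate the boundary congruence into $\Sigma$ by unique continuation. Both $\psi$ and $\psi'$ are minimal isometric immersions of the same metric, so their coordinate functions are harmonic on $\Sigma$. The free boundary condition in $\Sph^n$ says that the outward conormal $\nu$ of $\Sigma$ along $\Gamma$ equals the position vector. So $\partial_\nu\psi=\psi$ and $\partial_\nu\psi'=\psi'$ on $\Gamma$, and $\psi(\Gamma),\psi'(\Gamma)\subset\Sph^n$.

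\emph{Step 1: the boundary mean curvatures agree in absolute value.} Let $N,N'$ be unit normals of $\psi,\psi'$ and $A,A'$ the second fundamental forms.
\begin{itemize}
\item Along $\Gamma$, the normal $N$ is tangent to $\Sph^n$, and it is the unit normal of $\psi(\Gamma)$ in $\Sph^n$.
\item For $v\in T\Gamma$ we have $A(v,\nu)=-\langle \nabla_v N,\psi\rangle=\langle N,v\rangle=0$. Hence $|A|^2=a^2+|h|^2$ on $\Gamma$, where $a=A(\nu,\nu)$ and $h=A|_{T\Gamma}$.
\item The form $h$ is the second fundamental form of $\psi(\Gamma)\subset\Sph^n$, since $\langle N,\psi\rangle=0$. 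Minimality of $\psi$ gives $\operatorname{tr}h=-a$.
\end{itemize}
The same identities hold with primes. Two Gauss equations then give
\begin{align*}
|A|^2 &= |A'|^2 && \text{(Gauss for $\Sigma\subset\R^{n+1}$, with $H=H'=0$ and $\operatorname{Scal}_\Sigma$ intrinsic)},\\
a^2-|h|^2 &= a'^2-|h'|^2 && \text{(Gauss for $\Gamma\subset\Sph^n$, since $\operatorname{Scal}_\Gamma$ is intrinsic)}.
\end{align*}
Adding them yields $a^2=a'^2$. Therefore the mean curvature of $\psi'(\Gamma)$ in $\Sph^n$ has absolute value equal to the mean curvature $H_\Gamma=|\operatorname{tr}h|$ of $\psi(\Gamma)$. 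After orienting each component so that the bounded domain is mean convex, this gives $H_\Gamma\ge 0$.

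\emph{Step 2: congruence of each boundary component.} Fix a component $\Gamma_i$. It is embedded in $\Sph^n$ and bounds a mean convex domain contained in an open hemisphere. Moreover $\psi'|_{\Gamma_i}$ is an isometric immersion into $\Sph^n$ whose mean curvature vector satisfies $|\vec H'|\le H_{\Gamma_i}$. The hemisphere codimension-reduction and congruence theorem of Freitas and Guimar\~aes~\cite{FreitasGuimaraes25} then gives $R_i\in O(n+1)$ with $\psi'=R_i\psi$ on $\Gamma_i$. The isometry fixes the origin because both images lie in $\Sph^n$ and the congruence is realised by an isometry of the sphere.

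\emph{Step 3: unique continuation.} Consider $u=\psi'-R_i\psi$. It is harmonic on $\Sigma$, it vanishes on $\Gamma_i$, and its conormal derivative is $\partial_\nu u=\psi'-R_i\psi=0$ on $\Gamma_i$. By unique continuation for the Cauchy problem for harmonic functions, $u\equiv 0$ on the connected manifold $\Sigma$. In particular all the $R_i$ coincide, which handles the several components, and $\psi'=R\psi$.

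I expect the main obstacle to be Step 1. The curvature term $a=A(\nu,\nu)$ is not a priori controlled by the boundary data, and it is the combination of the two Gauss equations that pins it down. One must also check the orientation and sign conventions carefully, so that the hypotheses of~\cite{FreitasGuimaraes25} hold with the domains described in the statement. If that theorem requires the domain's closure in the hemisphere, the pairwise disjointness and open-hemisphere assumptions should supply what is needed.
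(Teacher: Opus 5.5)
Your proof is correct, but after the boundary step it takes a genuinely different route from the paper's.

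\textbf{Where you agree with the paper.} Your Step~1 is the paper's boundary-curvature lemma in another form. The paper contracts the Gauss equation once to get $\Ric^\Sigma(\mu,\mu)=-(H_\Gamma^T)^2$. You instead add the scalar-curvature Gauss equations of $\Sigma\subset\R^{n+1}$ and $\Gamma\subset\Sph^n$. Your Step~2 is the same appeal to the hemisphere congruence theorem.

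\textbf{How the paper finishes.} It applies the hemisphere theorem on \emph{every} component of $\Gamma$ to build a map $G$ on the whole region $T\subset\Sph^n$. It then solves a Dirichlet problem on the cornered domain $\Omega$ bounded by $\Sigma\cup T$, with data $\psi'\circ\psi^{-1}$ and $G$, and runs Reilly's formula. The $\Sigma$-terms vanish by minimality, the $T$-terms form the square $n|\mathbf v-G|^2$, and the corner residue vanishes by orthogonality at $\Gamma$. So the extension is affine, and then orthogonal.

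\textbf{How you finish.} You notice that in the minimal free boundary case the coordinates of both immersions solve the overdetermined problem $\Delta_\Sigma\psi=0$, $\partial_\nu\psi=\psi$ on $\Gamma$. Agreement of Dirichlet data on one component therefore already gives agreement of Cauchy data there, and unique continuation finishes.

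\textbf{What each route buys.} Yours is much shorter and avoids the corner regularity theory for the mixed problem. It never uses $\Omega$, the embeddedness of $\Sigma$ itself, or the pairwise disjointness of the domains. It needs the hemisphere hypothesis on a single component of $\Gamma$ only. The paper's route is chosen because the same Reilly computation also proves Theorem~C. There $\Delta_\Sigma\psi'=\vec H'$ is controlled only through $|\vec H'|\le H_\Sigma$, and unique continuation gives nothing.

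\textbf{Two details to add.}

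First, the sign of the conormal derivative. The statement does not place $\psi'(\Sigma)$ in the ball, so \emph{free boundary} a priori gives only $\partial_\nu\psi'=\pm\psi'$, with constant sign on each component of $\Gamma$. With the minus sign, the Cauchy data of $u$ would be $(0,-2R_i\psi)$ and Step~3 would fail. The plus sign follows from the maximum principle: $\Delta_\Sigma|\psi'|^2=2n>0$ forces $|\psi'|\le 1$ on $\Sigma$, hence $\langle\partial_\nu\psi',\psi'\rangle\ge 0$ on $\Gamma$. The paper's Reilly argument never needs this sign in advance, since it only uses that $\psi'_*\mu$ is normal to $\Sph^n$.

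Second, unique continuation with a merely smooth metric. Enlarge $\Sigma$ slightly across $\Gamma_i$ and extend $u$ by zero. The vanishing Cauchy data make the extension weakly harmonic, and Aronszajn's theorem applies on the connected manifold.
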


In the capillary case, that is, $0<\theta<\pi/2$, we also obtain the corresponding result under a mean
curvature comparison.

\begin{theoremC}\label{thmC}
Let $\psi\colon \Sigma^n\to\R^{n+1}$ be a compact embedded hypersurface with
$\psi(\Sigma)\subset\overline{\mathbb{B}}^{\,n+1}$ and mean curvature $H_\Sigma\geq 0$,
$\theta$-capillary in $\Sph^n$ with $\theta\in(0,\frac{\pi}{2})$, and set
$\Gamma=\partial\Sigma$. Assume that each connected component of $\Gamma$
bounds a mean convex domain of $\Sph^n$ contained in an open hemisphere, and
that these domains are pairwise disjoint. If $\psi'\colon \Sigma^n\to\R^{n+1}$ is an
isometric immersion with mean curvature vector $\vec H'$ satisfying
$|\vec H'|\leq H_\Sigma$, $\theta$-capillary in $\Sph^n$ with the same angle $\theta$,
then $\psi'$ differs from $\psi$ by a rigid motion.
\end{theoremC}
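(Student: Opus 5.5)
We adapt Ros' argument for Theorem~\ref{thm:ros} to the capillary setting in the ball. Let $\Omega\subset\mathbb{B}^{n+1}$ be the region enclosed by $\psi(\Sigma)$ together with the union $T$ of the pairwise disjoint mean convex caps of $\Sph^n$ bounded by the components of $\Gamma$. The hypotheses (embeddedness, $\psi(\Sigma)\subset\overline{\mathbb{B}}^{\,n+1}$, the caps lying in open hemispheres and being disjoint) are what make $\partial\Omega=\Sigma\cup T$ a Lipschitz boundary with a corner of interior angle determined by $\theta$ along $\Gamma$. Since the position vector $\psi'$ satisfies $\Delta\psi'=n\vec H'$ on $\Sigma$, the strategy is to compare two integral quantities. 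The first is the Dirichlet-type energy of $\psi'$ on $\Sigma$, which is intrinsic apart from the boundary terms. The second is a lower bound obtained from Reilly's formula applied on $\Omega$ to harmonic extensions of coordinate-type functions built from $\psi'$.

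\textbf{Step 1.} Use $\Delta_\Sigma|\psi'|^2=2n+2n\langle\psi',\vec H'\rangle$ and integrate over $\Sigma$. Along $\Gamma$ we have $|\psi'|=1$, and the capillary condition fixes the conormal derivative $\partial_\nu|\psi'|^2/2=\langle\psi',\nu'\rangle=\cos\theta$-type constant, where the exact constant follows from the angle convention. This gives
\[
n|\Sigma|+n\int_\Sigma\langle\psi',\vec H'\rangle=\cos\theta\,|\Gamma|\quad(\text{up to sign convention}).
\]
The same identity holds for $\psi$. The boundary term is therefore the same for both immersions, because $|\Gamma|$ is intrinsic. Hence $\int_\Sigma\langle\psi',\vec H'\rangle=\int_\Sigma\langle\psi,\vec H\rangle$. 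In Theorem~\ref{thmB}, with $H=0$, this identity is automatic, and the free boundary condition makes $\nu'$ equal to $\psi'$ on $\Gamma$.

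\textbf{Step 2.} For each unit direction $a$, let $f_a=\langle\psi',a\rangle$ on $\Sigma$ and solve a mixed problem on $\Omega$: $\Delta u=0$ in $\Omega$, $u=f_a$ on $\Sigma$, and a Robin condition $\partial_N u=u$ (or the corresponding capillary-adapted condition) on $T$. Apply Reilly's formula, in the version with a convex or umbilic boundary portion on the sphere, since $T\subset\Sph^n$ has second fundamental form equal to the identity. Summing over an orthonormal basis of $a$ in $\R^{n+1}$ should yield an inequality of the form
\[
\int_\Sigma H_\Sigma\,|\nabla^\perp\text{-part}|^2+\dots\le \int_\Sigma |\nabla_\Sigma\psi'|^2-\text{(boundary terms on }\Gamma\text{)}.
\]
The left-hand side is bounded below using $|\vec H'|\le H_\Sigma$ and Cauchy--Schwarz, exactly as in Ros' proof. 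Mean convexity of the caps is used to sign the term involving the mean curvature of $\partial T$ inside $\Sph^n$ that arises on the Robin part. The contained-in-a-hemisphere hypothesis ensures that the Robin problem is coercive, so it has no kernel.

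\textbf{Step 3.} Combining Step 1 with Step 2 forces equality throughout. Equality in Reilly's formula gives $\nabla^2u=0$, so each $u$ is affine. Hence $\psi'=A\psi+b$ on $\Sigma$ for a linear map $A$, and the isometry condition makes $A$ an isometry on the span of $d\psi$. The boundary condition $|\psi'|=1=|\psi|$ on $\Gamma$, together with the capillarity, then forces $b=0$ and $A\in O(n+1)$.

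The main obstacle is Step 2. The difficulty is choosing the boundary condition on $T$ so that the boundary integrals on $T$ and at the corner $\Gamma$ cancel exactly against the capillary term in Step 1 for every $\theta\in(0,\pi/2)$. This requires tracking the $\cos\theta$ contributions carefully. A further difficulty is justifying Reilly's formula on the nonsmooth domain $\Omega$. For that, I would first use regularity results for mixed problems at corners with angle less than $\pi$ (the hypothesis $\theta<\pi/2$ is helpful here), and if needed approximate $\Omega$ by smooth domains.
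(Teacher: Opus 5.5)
Your Step~2 is where the proof has to happen, and it does not close. On a spherical support $h^T=g_T$ and $H_T=n$. So the $T$-part of Reilly's formula contains $\sum_i h^T(\nabla^Tw_i,\nabla^Tw_i)=|\nabla_TW|^2$ for the unknown trace $W=F|_T$, and a condition on $T$ that leaves $W$ free gives you no control of it. Your own candidate $\partial_NF=F$ shows this concretely. It gives $v_i=w_i$, hence $\mathcal I_i^T=w_i\Delta_Tw_i+nw_i^2$. At the corner, $F_*N=\psi'$ and $F_*\mu=\psi'_*\mu$, so \eqref{eq:capillarity} and $g(\psi'_*\mu,\psi')=\sin\theta$ make both corner integrals vanish identically. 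No curvature of $\Gamma$ enters, so there is nothing for mean convexity to sign. You are left with
\[
0\ge-\sum_i\int_\Omega|\bar\nabla^2F_i|^2\,d\Omega\ge\int_\Sigma H_\Sigma(|\mathbf u|-1)^2\,dA-\int_T\bigl(|\nabla_TW|^2-n|W|^2\bigr)\,dA_T ,
\]
and the last integral has the wrong sign. Suppose $\psi'|_\Gamma$ extends to an isometry $G$ on $T$, and write $W=G+\phi$ with $\phi|_\Gamma=0$. Then $\Delta_TG=-nG$, $|G|=1$ and $|\nabla_TG|^2=n$ give $\int_T(|\nabla_TW|^2-n|W|^2)\,dA_T=\int_T(|\nabla_T\phi|^2-n|\phi|^2)\,dA_T\ge0$. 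This holds because each component of $T$ lies in an open hemisphere and so has first Dirichlet eigenvalue larger than $n$. The hemisphere hypothesis therefore works against this scheme rather than supplying coercivity, and the inequality closes only if you already know $W=G$.

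That is exactly what the paper arranges first, by proving congruence of the boundary inside $\Sph^n$. For $\theta\neq\pi/2$, differentiating \eqref{eq:capillarity} along $\Gamma$ gives $h^\Sigma_\Gamma=\cos\theta\,h^T_\Gamma+\sin\theta\,g$. Hence $H^T_\Gamma=(H^\Sigma_\Gamma-(n-1)\sin\theta)/\cos\theta$ is intrinsic and $H^{T\prime}_\Gamma=H^T_\Gamma\ge0$ (Lemma~\ref{lem:capillary-boundary}(i)). The congruence theorem in the sphere, Theorem~A of \cite{FreitasGuimaraesCongruence}, is then applied to each component of $\Gamma$; this is where ``bounds a mean convex domain in an open hemisphere'' is really used. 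It yields $G\colon T\to\Sph^n$, an ambient isometry on each component, with $G|_\Gamma=\psi'|_\Gamma$. Prescribing the Dirichlet datum $F=G$ on $T$, the $T$-terms become $\int_T n|F_*N-G|^2\,dA_T$. The corner residue vanishes by capillarity once one checks $\psi'_*\mu=\sin\theta\,G+\cos\theta\,G_*\bar\nu$; the opposite sign would force $H^T_\Gamma\equiv0$. Equality then gives $F=Bx+b$ with $F_*N=G=F$ on $T$, hence $b=0$ and $B\in O(n+1)$. Your Step~1 identity plays no role. Your Step~3 also needs this information on $T$, because the facts you cite there do not force $b=0$ or $A\in O(n+1)$. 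For the flat $\theta$-capillary disk $\Sigma=\{\langle x,e\rangle=\cos\theta\}\cap\overline{\mathbb B}^{\,n+1}$ with $\psi'=\psi$, every $F(x)=x+(\langle x,e\rangle-\cos\theta)\,w$ satisfies them.
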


As in~\cite{Ros88}, the proofs make use of Reilly's formula, which we apply
after extending the coordinates of the isometric immersion to the region enclosed by $\psi(\Sigma)$ and
the support. The difference is that the boundary of this region is no longer smooth. It
consists of hypersurfaces meeting along the corner $\Gamma$, and this is why we need information about how $\Gamma$ lies in the supporting sphere.

The second part is also motivated by the work of Ros~\cite{Ros88}, who
used the same tools to obtain a Heintze--Karcher inequality and an
Alexandrov-type result. In the capillary setting this topic has been studied extensively. In particular, the work of Jia, Xia and
Zhang~\cite{JiaXiaZhang2023}, based on Reilly-type formulas and mixed
boundary value problems, is the starting point for this part of our work
(see also \cite{JiaWangXiaZhangWedge2025,JiaWangXiaZhangAnisotropic2023,JiaLuXiaZhang2025}
for related results on wedges, anisotropic capillary hypersurfaces and
partially overdetermined problems). We also mention the result of
Hu, Wei, Xia and Zhou~\cite{HuWeiXiaZhou2025} in a hyperbolic half-space,
whose proof uses a geodesic normal flow for a Randers metric, obtains a stronger inequality, and does not require \(0 < \theta < \frac{\pi}{2}\).

Our purpose here is to obtain a Heintze--Karcher inequality for capillary hypersurfaces in a spherical
cap, following closely the method of~\cite{JiaXiaZhang2023}, but without
fixing the ambient model or choosing conformal Killing fields. For this, we use the weighted Reilly formula obtained by Li and Xia
\cite{LiXiaReillySubstatic}. Its natural ambient setting is that of a
\emph{substatic triple} $(\overline M^{n+1},g,V)$, that is, one for which the tensor
\[
  \mathcal Q_V:=(\bar\Delta V)g-\bar\nabla^2V+V\,\Ric
\]
is nonnegative. This framework contains the static space forms and the warped
products considered by Brendle \cite{Brendle2013}. In the
presence of a supporting hypersurface $T$, the weighted Reilly formula singles out the
boundary tensor
\[
  h^T-\frac{V_N}{V}\,g_T,
\]
and this leads to the corresponding notion of $V$-convexity of the support.

The capillary correction term arises directly from the
boundary contribution of Reilly's formula. In particular, no Killing or
conformal Killing field is needed in the proof of the general inequality. This allows us to recover the corresponding space form inequalities in a unified way.

\begin{theoremD}\label{thmD}
Let $(\overline M^{n+1},g,V)$ be substatic on a compact domain
$\Omega$ with $V>0$ on $\overline\Omega$ and boundary
$\partial\Omega=\Sigma\cup T$ formed by two smooth hypersurfaces meeting
along their common smooth corner $\Gamma=\partial\Sigma=\partial T$. Assume that $\Sigma$ is compact,
embedded, has mean curvature $H_\Sigma>0$, and meets $T$ at a
constant angle $\theta\in(0,\frac{\pi}{2})$. If
\[
  h^T-\frac{V_N}{V}g_T\geq0
\]
then
\[
  \mathcal K_\theta
  :=\tan\theta\int_\Gamma V\,ds-\int_T H_TV\,dA_T
\]
is positive and
\[
  \int_\Sigma\frac{V}{H_\Sigma}\,dA
  \geq
  \frac{n+1}{n}\int_\Omega V\,d\Omega
  +\frac{\left(\int_TV\,dA_T\right)^2}{\mathcal K_\theta}.
\]
If equality holds, then $\Sigma$ is totally umbilical.
\end{theoremD}

The paper is organized as follows. In Section~\ref{sec:preliminaries} we fix
the capillary notation, recall the classical and weighted Reilly formulas, and
introduce the substatic and boundary tensors that will be used later. In
Section~\ref{sec:congruence} we prove the congruence results. In
Section~\ref{sec:substatic-HK} we establish the capillary Heintze--Karcher
inequality in a substatic manifold and obtain an Alexandrov-type result.

\section{Preliminaries}\label{sec:preliminaries}

We fix the notation for capillary domains and recall the Reilly identities
used in the proofs. Let $(\overline M^{n+1},g)$ be a Riemannian manifold and let
$S\subset\overline M$ be a smooth, orientable, embedded hypersurface, called
the \emph{support}. Let $\Sigma^n\subset\overline M$ be a compact embedded
hypersurface with boundary $\Gamma:=\partial\Sigma\subset S$. Together with a
region $T\subset S$, the hypersurface $\Sigma$ bounds a domain
$\Omega\subset\overline M$ with $\partial\Omega=\Sigma\cup T$.
We assume that $\Omega$ is connected and relatively compact. Along the
corner $\Gamma=\partial\Sigma=\partial T$, we use the following unit normals
\begin{itemize}
  \item $\nu$, the unit normal to $\Sigma$ pointing out of $\Omega$,
  \item $\mu$, the outward conormal of $\Gamma^{n-1}$ in $\Sigma$,
  \item $N$, the unit normal to $S$ along $T$, pointing out of $\Omega$,
  \item $\bar\nu$, the outward conormal of $\Gamma^{n-1}$ in $T$.
\end{itemize}
The pairs $\{\nu,\mu\}$ and $\{N,\bar\nu\}$ are two orthonormal bases of the
normal plane of $\Gamma$ in $\overline M$. The hypersurface $\Sigma$ is
\emph{$\theta$-capillary} in $S$, with $\theta\in(0,\pi)$, if $g(\nu,N)=-\cos\theta$ along $\Gamma$. With these conventions,
\begin{equation}\label{eq:capillarity}
  \mu = \sin\theta\,N+\cos\theta\,\bar\nu,\qquad
  \nu = -\cos\theta\,N+\sin\theta\,\bar\nu.
\end{equation}
When $\theta=\pi/2$ the hypersurface is called \emph{free boundary}.

\begin{figure}[ht]
\centering
\definecolor{suppcol}{RGB}{95,94,90}
\definecolor{surfcol}{RGB}{15,110,86}
\definecolor{domcol}{RGB}{225,245,238}
\definecolor{framea}{RGB}{163,45,45}
\definecolor{frameb}{RGB}{24,95,165}
\begin{tikzpicture}[>=Stealth, scale=1.15,
  vec/.style={line width=1pt,->}]

  \coordinate (C) at (0,-1.270);
  \coordinate (L) at (-2.2,0);
  \coordinate (R) at (2.2,0);

  \fill[domcol] ($(C)+(30:2.54)$) arc[start angle=30,end angle=150,radius=2.54] -- cycle;
  \draw[suppcol,line width=1pt] (-3.9,0) -- (4.1,0);
  \draw[surfcol,line width=1.1pt] ($(C)+(150:2.54)$) arc[start angle=150,end angle=30,radius=2.54];

  \node[suppcol] at (-3.4,0.30) {$S$};
  \node[surfcol] at (-1.65,1.18) {$\Sigma$};
  \node[suppcol] at (0,-0.33) {$T$};
  \node[surfcol!85!black] at (0,0.52) {$\Omega$};

  \fill (R) circle (1.4pt);
  \fill[black!50] (L) circle (1.1pt);
  \node at (1.92,-0.33) {$\Gamma$};

  \draw[suppcol,line width=0.8pt] ($(R)+(0:0.62)$) arc[start angle=0,end angle=-60,radius=0.62];
  \node[suppcol] at ($(R)+(-30:0.87)$) {$\theta$};

  \draw[framea,vec] (R) -- ++(1.299,0.75)  node[right,inner sep=2pt]{$\nu$};
  \draw[framea,vec] (R) -- ++(0.75,-1.299) node[below right,inner sep=1pt]{$\mu$};

  \draw[frameb,vec] (R) -- ++(1.5,0)  node[right,inner sep=2pt]{$\bar\nu$};
  \draw[frameb,vec] (R) -- ++(0,-1.5) node[below,inner sep=2pt]{$N$};

\end{tikzpicture}

\end{figure}

On a boundary hypersurface $\mathcal S$ with outward unit normal $\eta$,
we use the convention
\[
  h^{\mathcal S}(X,Y)=g(\bar\nabla_X\eta,Y),
  \qquad H_{\mathcal S}=\operatorname{tr}_{g_{\mathcal S}}h^{\mathcal S}.
\]
All mean curvatures are unnormalized. In particular,
$H_\Sigma=\operatorname{tr}_{g_\Sigma}h^\Sigma$ and
$H_T=\operatorname{tr}_{g_T}h^T$. We also use $g$ for the induced metrics
and for the Euclidean metric when the ambient manifold is $\R^{n+1}$.
The classical Reilly formula takes the following form.

\begin{theorem}[\cite{Reilly77}]\label{thm:reilly-classical}
Let $\Omega\subset\R^{n+1}$ be a bounded domain with piecewise smooth boundary
and let $f\in C^1(\overline\Omega)\cap
C^\infty(\overline\Omega\setminus\Gamma)$ satisfy
$\bar\nabla^2f\in L^2(\Omega)$, and assume that the boundary terms below are integrable.
On each smooth hypersurface in $\partial\Omega$, let
$\eta$ be the outward unit normal. Then
\begin{equation*}
\int_\Omega\bigl[(\bar\Delta f)^2-|\bar\nabla^2f|^2\bigr]\,d\Omega
=
\sum_{\mathcal S\in\{\Sigma,T\}}\int_{\mathcal S}
\Bigl[
 f_\eta\,\Delta_{\mathcal S} f
 -g(\nabla^{\mathcal S} f_\eta,\nabla^{\mathcal S} f)
 +H_{\mathcal S}f_\eta^2
 +h^{\mathcal S}(\nabla^{\mathcal S} f,\nabla^{\mathcal S} f)
\Bigr]\,dA_{\mathcal S}.
\end{equation*}
Here $\partial\Omega=\Sigma\cup T$, and the differential operators
in each boundary integral are intrinsic to the corresponding hypersurface.
\end{theorem}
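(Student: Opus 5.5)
The statement to prove is the classical Reilly formula (Theorem~\ref{thm:reilly-classical}) on a domain whose boundary is piecewise smooth with a corner along $\Gamma$. The plan is to start from the Bochner identity, integrate by parts on each smooth boundary piece, and handle the corner by approximation.

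\textbf{Step 1: the Bochner divergence identity.} For smooth $f$ on $\R^{n+1}$ one has pointwise
\[
(\bar\Delta f)^2-|\bar\nabla^2f|^2
=\operatorname{div}\bigl(\bar\Delta f\,\bar\nabla f-\bar\nabla^2f(\bar\nabla f,\cdot)^\sharp\bigr).
\]
This holds because the ambient space is flat, so the Ricci term in Bochner's formula vanishes. Integrating over $\Omega$ gives the left-hand side as a sum over $\mathcal S\in\{\Sigma,T\}$ of
\[
\int_{\mathcal S}\bigl[\bar\Delta f\,f_\eta-\bar\nabla^2f(\bar\nabla f,\eta)\bigr]\,dA_{\mathcal S}.
\]

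\textbf{Step 2: rewrite each boundary integrand intrinsically.} On $\mathcal S$, decompose
\[
\bar\Delta f=\Delta_{\mathcal S}f+H_{\mathcal S}f_\eta+\bar\nabla^2f(\eta,\eta),
\]
using the sign convention $h^{\mathcal S}(X,Y)=g(\bar\nabla_X\eta,Y)$. Write $\bar\nabla f=\nabla^{\mathcal S}f+f_\eta\eta$. Then
\[
\bar\nabla^2f(\bar\nabla f,\eta)=\bar\nabla^2 f(\nabla^{\mathcal S}f,\eta)+f_\eta\bar\nabla^2f(\eta,\eta).
\]
For $X$ tangent to $\mathcal S$,
\[
\bar\nabla^2f(X,\eta)=X(f_\eta)-df(\bar\nabla_X\eta)=g(\nabla^{\mathcal S}f_\eta,X)-h^{\mathcal S}(X,\nabla^{\mathcal S}f).
\]
Substituting these, the $\bar\nabla^2f(\eta,\eta)$ terms cancel. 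What remains is exactly the integrand
\[
f_\eta\Delta_{\mathcal S}f-g(\nabla^{\mathcal S}f_\eta,\nabla^{\mathcal S}f)+H_{\mathcal S}f_\eta^2+h^{\mathcal S}(\nabla^{\mathcal S}f,\nabla^{\mathcal S}f).
\]
No integration by parts on $\mathcal S$ is performed, so no boundary term along $\Gamma$ arises. This is why the statement carries no corner integral.

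\textbf{Step 3: justify the identity under the low regularity assumed.} This is the main obstacle. The function $f$ is only $C^1$ up to $\Gamma$ with $\bar\nabla^2f\in L^2$, and $\partial\Omega$ has a corner, so the divergence theorem cannot be applied directly. I would exhaust $\Omega$ by domains $\Omega_\varepsilon=\Omega\setminus\{d(\cdot,\Gamma)<\varepsilon\}$ and apply Steps 1–2 on $\Omega_\varepsilon$. Its boundary consists of $\Sigma_\varepsilon$, $T_\varepsilon$, and a tubular piece $C_\varepsilon$ near $\Gamma$. Smoothing the corners of $\Omega_\varepsilon$ slightly if needed, $f$ is smooth on $\overline{\Omega_\varepsilon}$.

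As $\varepsilon\to0$, the interior integral converges by $\bar\nabla^2 f\in L^2$ and dominated convergence. The integrals over $\Sigma_\varepsilon$ and $T_\varepsilon$ converge because the boundary terms are assumed integrable.

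It remains to show that the flux through $C_\varepsilon$ tends to zero. The flux of the vector field $\bar\Delta f\,\bar\nabla f-\bar\nabla^2f(\bar\nabla f,\cdot)$ through $C_\varepsilon$ is bounded by $\sup|\bar\nabla f|$ times $\int_{C_\varepsilon}|\bar\nabla^2f|$. Since $|\bar\nabla f|$ is bounded by the $C^1$ hypothesis, this is controlled by the $L^1$ norm of $\bar\nabla^2 f$ on $C_\varepsilon$. Although that norm need not vanish for every $\varepsilon$, we have
\[
\int_0^{\delta}\!\int_{C_\varepsilon}|\bar\nabla^2f|\,d\varepsilon=\int_{\{d<\delta\}}|\bar\nabla^2 f|\to0
\]
by the coarea formula and $L^2\subset L^1$ locally. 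We also have $|C_\varepsilon|\sim\varepsilon$. Hence along a suitable sequence $\varepsilon_k\to0$ the flux $\int_{C_{\varepsilon_k}}|\bar\nabla^2 f|$ tends to zero. Passing to the limit along $\varepsilon_k$ yields the formula.
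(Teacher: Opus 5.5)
\textbf{Assessment.} The paper does not prove this statement. It quotes it from Reilly and, in Remark~\ref{rem:HK-mixed-regularity}, only asserts that $C^1$ regularity up to $\Gamma$, a square-integrable Hessian and integrable boundary terms justify it. Your Steps 1 and 2 are the classical derivation and are correct. The signs match the convention $h^{\mathcal S}(X,Y)=g(\bar\nabla_X\eta,Y)$, and you rightly observe that no corner term appears because no tangential integration by parts is performed. The exhaustion by $\Omega_\varepsilon$ is a good way to supply the justification the paper leaves implicit. Smoothing the corners of $\Omega_\varepsilon$ is unnecessary, and better avoided since it would alter $\Sigma_\varepsilon$ and $T_\varepsilon$. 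For small $\varepsilon$ the tube $\{d=\varepsilon\}$ meets $\Sigma$ and $T$ transversally, so $\Omega_\varepsilon$ is Lipschitz and the divergence theorem applies directly to fields smooth up to its boundary.

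\textbf{The gap.} The final inference of Step 3 does not follow as written. Set $\phi(\varepsilon)=\int_{C_\varepsilon}|\bar\nabla^2f|\,dA$. The fact that $\int_0^\delta\phi\,d\varepsilon\to0$ as $\delta\to0$ holds for every integrable $\phi$, for instance $\phi\equiv1$. It therefore cannot produce a sequence with $\phi(\varepsilon_k)\to0$. By passing to ``$L^2\subset L^1$'' you discarded exactly the information that is needed. The observation $|C_\varepsilon|\sim\varepsilon$ is stated but never combined with anything.

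\textbf{The fix.} What you need is that the \emph{average} of $\phi$ over $(0,\delta)$ tends to zero. This follows from Cauchy--Schwarz together with the fact that $\Gamma$ has codimension two, so $|\{d<\delta\}\cap\Omega|\le C\delta^2$:
\[
\frac1\delta\int_0^\delta\phi(\varepsilon)\,d\varepsilon
=\frac1\delta\int_{\{d<\delta\}\cap\Omega}|\bar\nabla^2f|
\le\frac1\delta\,\bigl|\{d<\delta\}\cap\Omega\bigr|^{1/2}\,\|\bar\nabla^2f\|_{L^2(\{d<\delta\}\cap\Omega)}
\le C'\,\|\bar\nabla^2f\|_{L^2(\{d<\delta\}\cap\Omega)}\longrightarrow0 .
\]
Hence you can choose $\varepsilon_k\in(0,1/k)$ with $\phi(\varepsilon_k)\to0$.

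An equivalent route: $\phi(\varepsilon)^2\le|C_\varepsilon|\int_{C_\varepsilon}|\bar\nabla^2f|^2\,dA\le C\varepsilon\int_{C_\varepsilon}|\bar\nabla^2f|^2\,dA$ gives $\int_0^\delta\phi^2\,d\varepsilon/\varepsilon<\infty$, which forces $\liminf_{\varepsilon\to0}\phi=0$.

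This is precisely where the hypothesis $\bar\nabla^2f\in L^2(\Omega)$, rather than mere integrability of the Hessian, enters. With this line inserted, your argument is complete.
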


For the congruence argument, set $z=f|_\Sigma$ and $u=f_\nu$. It will be useful to integrate by parts to obtain a corner term,
\begin{equation}\label{eq:reilly-corner-prelim}
\begin{aligned}
&\int_\Sigma
\Bigl[u\,\Delta_\Sigma z-g(\nabla^\Sigma u,\nabla^\Sigma z)
+H_\Sigma u^2+h^\Sigma(\nabla^\Sigma z,\nabla^\Sigma z)\Bigr]\,dA
\\
&\qquad=
\int_\Sigma
\Bigl[2u\,\Delta_\Sigma z+H_\Sigma u^2+h^\Sigma(\nabla^\Sigma z,\nabla^\Sigma z)\Bigr]\,dA
-\int_\Gamma u\,g(\nabla^\Sigma z,\mu)\,ds.
\end{aligned}
\end{equation}

For the Heintze--Karcher inequalities we use the weighted Reilly formula
of Li and Xia~\cite{LiXiaReillySubstatic}, which extends the formula of
Qiu and Xia~\cite{QiuXia15} on Riemannian manifolds.
Let $U\subset\overline M$ be an open neighbourhood of $\overline\Omega$
and let $V\in C^\infty(U)$ be positive. Define
\begin{equation*}
\begin{aligned}
  L_Vf&:=\bar\Delta f-\frac{\bar\Delta V}{V}f,\\
  \mathcal A_V[f]&:=\bar\nabla^2f-\frac{f}{V}\bar\nabla^2V,\\
  W_V[f]&:=\bar\nabla f-\frac{f}{V}\bar\nabla V,
\end{aligned}
\end{equation*}
so that $\operatorname{tr}_{g}\mathcal A_V[f]=L_Vf$.

\begin{theorem}[\cite{LiXiaReillySubstatic}]
\label{thm:reilly-weighted}
Let $\Omega\subset\overline M^{n+1}$ be a relatively compact domain with
$\partial\Omega=\Sigma\cup T$ as above, and let $V\in C^\infty(U)$ be
positive on a neighbourhood $U$ of $\overline\Omega$. Suppose that
$f\in C^1(\overline\Omega)\cap C^\infty(\overline\Omega\setminus\Gamma)$,
$\bar\nabla^2f\in L^2(\Omega)$, and that the boundary terms below are
integrable. On each $\mathcal S\in\{\Sigma,T\}$, with outward unit normal
$\eta$, set
\[
  z=f|_{\mathcal S},\qquad
  \varphi=f_\eta-\frac{V_\eta}{V}z,\qquad
  Z=\nabla^{\mathcal S}z-\frac{z}{V}\nabla^{\mathcal S}V.
\]
Then
\begin{align}
&\int_\Omega V\Bigl[(L_Vf)^2-|\mathcal A_V[f]|^2\Bigr]\,d\Omega
\notag\\
&\quad=
\int_\Omega\mathcal Q_V\bigl(W_V[f],W_V[f]\bigr)\,d\Omega
\notag\\
&\qquad
+\sum_{\mathcal S\in\{\Sigma,T\}}\int_{\mathcal S}
V\left[
\varphi\left(\Delta_{\mathcal S}z-\frac{\Delta_{\mathcal S}V}{V}z\right)
-g(\nabla^{\mathcal S}\varphi,Z)
+H_{\mathcal S}\varphi^2
\right]\,dA_{\mathcal S}
\notag\\
&\qquad
+\sum_{\mathcal S\in\{\Sigma,T\}}\int_{\mathcal S}
V\left(h^{\mathcal S}-\frac{V_\eta}{V}g_{\mathcal S}\right)(Z,Z)\,dA_{\mathcal S},
\label{eq:reilly-weighted-prelim}
\end{align}
where
\begin{equation*}
  \mathcal Q_V
  :=(\bar\Delta V)g-\bar\nabla^2V+V\,\Ric.
\end{equation*}
\end{theorem}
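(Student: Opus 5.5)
The plan is to reduce the formula to a pointwise Bochner-type divergence identity, apply the divergence theorem away from the corner, and then rewrite the boundary flux on each smooth piece of $\partial\Omega$ in intrinsic terms. Guided by the case $V\equiv1$, where the identity is $(\bar\Delta f)^2-|\bar\nabla^2f|^2-\Ric(\bar\nabla f,\bar\nabla f)=\operatorname{div}\bigl(\bar\Delta f\,\bar\nabla f-\bar\nabla^2f(\bar\nabla f,\cdot)\bigr)$, I would take the vector field
\[
  Y:=V\Bigl(L_Vf\,W_V[f]-\mathcal A_V[f](W_V[f],\cdot)^\sharp\Bigr).
\]
The first step is to prove, on the smooth part of $\overline\Omega$, the identity
\[
  V\bigl[(L_Vf)^2-|\mathcal A_V[f]|^2\bigr]=\mathcal Q_V(W_V[f],W_V[f])+\operatorname{div}Y .
\]
To carry out this computation, I would set $w=f/V$. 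Then
\[
  W_V[f]=V\bar\nabla w,\qquad \mathcal A_V[f]=V\bar\nabla^2w+dV\otimes dw+dw\otimes dV,\qquad L_Vf=V\bar\Delta w+2g(\bar\nabla V,\bar\nabla w).
\]
Next I would expand $\operatorname{div}Y$ using the Ricci identity $\operatorname{div}\bar\nabla^2w=d\bar\Delta w+\Ric(\bar\nabla w,\cdot)$, together with the analogous identity for $V$. The terms containing $\bar\nabla^2V$, $\bar\Delta V$ and $\Ric$ should collect exactly into $V^2\mathcal Q_V(\bar\nabla w,\bar\nabla w)=\mathcal Q_V(W,W)$, and all remaining terms should cancel. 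This is a long but mechanical computation, and it is where sign errors are most likely.

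The second step is the boundary decomposition. On each $\mathcal S\in\{\Sigma,T\}$ we have $g(W_V[f],\eta)=f_\eta-\frac{V_\eta}{V}z=\varphi$, and the tangential part of $W_V[f]$ is $Z$, so $W_V[f]=Z+\varphi\eta$. Hence
\[
  g(Y,\eta)=V\bigl[\varphi\operatorname{tr}_{g_{\mathcal S}}\mathcal A_V[f]-\mathcal A_V[f](Z,\eta)\bigr].
\]
Using $\operatorname{tr}_{g_{\mathcal S}}\bar\nabla^2f=\Delta_{\mathcal S}z+H_{\mathcal S}f_\eta$, and the same identity for $V$, gives
\[
  \operatorname{tr}_{g_{\mathcal S}}\mathcal A_V[f]=\Delta_{\mathcal S}z-\frac{\Delta_{\mathcal S}V}{V}z+H_{\mathcal S}\varphi .
\]
For the mixed term I would use $\bar\nabla^2f(Z,\eta)=Z(f_\eta)-h^{\mathcal S}(Z,\nabla^{\mathcal S}z)$, the same identity for $V$, and $\nabla^{\mathcal S}(z/V)=Z/V$. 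These yield
\[
  -\mathcal A_V[f](Z,\eta)=-g(\nabla^{\mathcal S}\varphi,Z)-\frac{V_\eta}{V}|Z|^2+h^{\mathcal S}(Z,Z).
\]
Substituting both expressions into $g(Y,\eta)$ reproduces exactly the integrand in~\eqref{eq:reilly-weighted-prelim}.

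The final step, and the main technical obstacle, is justifying the divergence theorem across the corner $\Gamma$, given only $f\in C^1(\overline\Omega)\cap C^\infty(\overline\Omega\setminus\Gamma)$ and $\bar\nabla^2f\in L^2$. I would first apply it on $\Omega_\varepsilon=\Omega\setminus\{d(\cdot,\Gamma)<\varepsilon\}$, where everything is smooth. The interior integrals converge as $\varepsilon\to0$ because $\bar\nabla^2f\in L^2$ and $f\in C^1$. The boundary pieces on $\Sigma$ and $T$ converge by the assumed integrability. For the flux through the tube $\{d=\varepsilon\}$, whose area is $O(\varepsilon)$, I would bound $|Y|\le C(1+|\bar\nabla^2f|)$ and average over $\varepsilon\in(\delta,2\delta)$. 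By the coarea formula this averaged flux is at most $C\delta^{-1}\int_{\{d<2\delta\}}(1+|\bar\nabla^2 f|)$, which by Cauchy–Schwarz is $O(\|\bar\nabla^2f\|_{L^2(\{d<2\delta\})})\to0$. Choosing a good sequence $\varepsilon_k\to0$ then gives the formula.
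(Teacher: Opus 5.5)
Your proposal is correct. It differs from the paper mainly in that the paper gives no proof: it quotes the identity from Li--Xia and leaves its validity across the corner $\Gamma$ to Remark~\ref{rem:HK-mixed-regularity}, whereas you prove the corner version directly.

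\textbf{The interior identity.} Your pointwise identity $V[(L_Vf)^2-|\mathcal A_V[f]|^2]=\mathcal Q_V(W_V[f],W_V[f])+\operatorname{div}Y$ holds with the signs you state. The computation you flag as long and error-prone becomes two lines if, instead of passing to $w=f/V$, you use
\begin{gather*}
\operatorname{div}\mathcal A_V[f]=d(L_Vf)+V^{-1}\mathcal Q_V(W_V[f],\cdot),\qquad
\operatorname{div}W_V[f]=L_Vf-V^{-1}g(W_V[f],\bar\nabla V),\\
\bar\nabla W_V[f]=\mathcal A_V[f]-V^{-1}\,W_V[f]\otimes dV.
\end{gather*}
The first of these comes from the Ricci identity applied to both $f$ and $V$.

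\textbf{The boundary term.} Your decomposition $W_V[f]=Z+\varphi\eta$ reproduces the integrand exactly. It also makes explicit something the paper uses tacitly. The boundary integrand is just the flux $g(Y,\eta)$, with no integration by parts along $\Sigma$ or $T$, so no terms on $\Gamma$ occur. Corner terms appear only when the paper later integrates by parts on purpose, as in \eqref{eq:reilly-corner-prelim} or via Green's formula on $T$.

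\textbf{The corner.} Your tube argument is sound, and it shows the stated hypotheses are exactly what is needed:
\begin{itemize}
\item $f\in C^1(\overline\Omega)$ gives $|Y|\le C(1+|\bar\nabla^2f|)$.
\item The codimension-two bound $|\Omega\cap\{d<2\delta\}|\le C\delta^2$, together with Cauchy--Schwarz, bounds the averaged flux by $C\delta+C\|\bar\nabla^2f\|_{L^2(\Omega\cap\{d<2\delta\})}$, which tends to zero. You dropped the $C\delta$ term, which is harmless.
\item The integrability assumption handles the pieces of $\Sigma$ and $T$.
\end{itemize}
The volume bound is what lets $L^2$ control of the Hessian suffice; $L^1$ control alone would not.

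In short, the citation buys brevity, while your route gives an actual proof of the corner version, which the paper only asserts through its regularity remark.
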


Following \cite[Definition~1.2]{LiXiaReillySubstatic}, the triple
$(U,g,V)$ is called \emph{substatic} if
\[
  (\bar\Delta V)g-\bar\nabla^2V+V\Ric\geq0\quad\text{on }U.
\]
It is called \emph{static} if this tensor vanishes identically.
Since $V>0$ on $U$, the substatic condition is equivalently
$\Ric-V^{-1}\bar\nabla^2V+V^{-1}(\bar\Delta V)g\geq0$.
For the supporting region $T\subset S$, oriented by $N$, set
\begin{equation}\label{eq:V-boundary-tensor-prelim}
  \mathcal B_V^T
  :=h^T-\frac{V_N}{V}g_T.
\end{equation}
We call $T$ \emph{$V$-convex} if $\mathcal B_V^T\geq0$.


\section{Congruence}\label{sec:congruence}

The second
fundamental forms of $\Gamma$ in $\Sigma$ and $T$, with respect to $\mu$
and $\bar\nu$, are denoted by $h_\Gamma^\Sigma$ and $h_\Gamma^T$.
Their mean curvatures are
\[
H_\Gamma^\Sigma=\operatorname{tr}_{g_\Gamma}h_\Gamma^\Sigma,
\qquad
H_\Gamma^T=\operatorname{tr}_{g_\Gamma}h_\Gamma^T.
\]
For the second immersion
$\psi'$, we write $H_\Gamma^{T\,\prime}$ for the corresponding boundary curvature.
For a Euclidean isometric immersion $\psi'$, we recall that
\[
\vec H'=\operatorname{tr}_g\alpha'=\Delta_\Sigma\psi',
\]
where $\alpha'(X,Y)=(\bar\nabla_X\psi'_*Y)^\perp$.
For the first immersion, this convention gives $\vec H=-H_\Sigma\nu$.

\begin{theoremA}\label{thm:CongruenceTotGeo}
Let $\psi\colon\Sigma^n\to\R^{n+1}$ be a compact embedded hypersurface in Euclidean
space, $\theta$-capillary in a hyperplane $S$ with $\theta\in(0,\frac{\pi}{2}]$
and $\Sigma\cap S=\Gamma$, with mean curvature $H_\Sigma\geq 0$. Let
$\psi'\colon\Sigma^n\to\R^{n+p}$ be an isometric immersion with mean curvature vector
$\vec H'$ satisfying $|\vec H'|\leq H_\Sigma$, such that $\psi'(\Gamma)\subset\hat S$
for some hyperplane $\hat S\subset\R^{n+p}$ and $\psi'$ is $\theta$-capillary in
$\hat S$ with the same angle $\theta$. Then $\psi'$ differs from $\psi$ by a
rigid motion.
\end{theoremA}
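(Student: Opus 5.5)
Following Ros's scheme, I would first normalise the second immersion and then test Reilly's formula on harmonic extensions of the coordinate functions of $\psi'$. After a rigid motion I would assume $\hat S=\{x_{n+p}=0\}$. Write $\psi'=(\psi'_1,\dots,\psi'_{n+p})$, and let $f_i$ be the harmonic extension to $\Omega$ of boundary data on $\partial\Omega=\Sigma\cup T$. The data are the Dirichlet condition $f_i=\psi'_i$ on $\Sigma$ together with a suitable Neumann-type condition on $T$; the natural choice is $\partial_N f_i$ prescribed so that the $T$-boundary terms of Theorem~\ref{thm:reilly-classical} are controlled. Here $T$ is flat, so $h^T=0$ and $H_T=0$, and the $T$ terms reduce to $\int_T[f_N\Delta_T f-g(\nabla^T f_N,\nabla^T f)]$. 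This integrates by parts to a corner term $-\int_\Gamma f_N\,g(\nabla^T f,\bar\nu)\,ds$.

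With $z_i=\psi'_i$ and $u_i=\partial_\nu f_i$, I would sum the identity \eqref{eq:reilly-corner-prelim} over $i$. Using $\bar\Delta f_i=0$, the left side is $-\sum_i\int_\Omega|\bar\nabla^2 f_i|^2\le 0$. On $\Sigma$ the sum is
\[
\sum_i\bigl[2u_i\Delta_\Sigma z_i+H_\Sigma u_i^2\bigr]+\sum_i h^\Sigma(\nabla z_i,\nabla z_i).
\]
Since $\psi'$ is isometric, $\sum_i h^\Sigma(\nabla z_i,\nabla z_i)=H_\Sigma$. Write $\vec u=(u_i)$ and note $\Delta_\Sigma z=\vec H'$. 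The $\Sigma$-integrand is then $2\langle\vec u,\vec H'\rangle+H_\Sigma|\vec u|^2+H_\Sigma$. By Cauchy--Schwarz and $|\vec H'|\le H_\Sigma$, this is at least $H_\Sigma(|\vec u|-1)^2\ge0$; when $H_\Sigma=0$ we also have $\vec H'=0$, so the bound still holds. The remaining contributions are the corner integrals over $\Gamma$ coming from $\Sigma$ and from $T$. The whole proof then hinges on showing that their sum is $\ge0$, so that every term vanishes. That forces $\bar\nabla^2 f_i=0$, so the $f_i$ are affine and $\psi'$ restricted to $\Sigma$ is the restriction of an affine map $A$. Isometry on the open set where $\Sigma$ spans, together with $\vec u=-\vec H'/H_\Sigma$ when $H_\Sigma>0$, gives $A$ linear isometric into $\R^{n+p}$. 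Hence $\psi'=\iota\circ\psi$ up to translation, which is the claimed rigid motion.

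The main obstacle is the corner term. Along $\Gamma$ the tangent directions are common to both immersions, and the capillarity frames \eqref{eq:capillarity} hold for both, with the same angle $\theta$. Using them, I would express $\nabla^\Sigma z\cdot\mu=\psi'_*\mu$. Here $\psi'_*\mu=\sin\theta\,N'+\cos\theta\,\bar\nu'$, where $N'=-e_{n+p}$ is normal to $\hat S$ and $\bar\nu'$ is the conormal in $\hat S$. This suggests choosing the Neumann data on $T$ to mimic these frames: set $\partial_N f=\psi'_*$-pushforward of $N$, i.e.\ $\partial_N f_{n+p}=-1$, with the other components zero. The Dirichlet trace on $T$ is then determined by the harmonic problem, a mixed boundary problem with a corner. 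I would have to check that $f\in C^1(\overline\Omega)$ with $\bar\nabla^2f\in L^2$; for $\theta\le\pi/2$ the opening angle at the corner should guarantee this, which I suspect is why $\theta\le\pi/2$ is assumed.

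With this choice the corner sum should become
\[
-\int_\Gamma\bigl(\langle \vec u,\psi'_*\mu\rangle-\langle f_N,\nabla f\cdot\bar\nu\rangle\bigr)\,ds.
\]
I would then decompose $\vec u=\partial_\nu f$ using $\nu=-\cos\theta N+\sin\theta\bar\nu$. The aim is to rewrite this corner integral as $\cos\theta$ times a nonnegative quantity plus something that cancels, again using $|\psi'_*\bar\nu'|=1$ and Cauchy--Schwarz. If the sign fails, I would instead compare with the same computation for $\psi$ itself, where everything is an equality, and reduce to the first-variation identity $\int_\Sigma\vec H'=-\int_\Gamma\psi'_*\mu$. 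The goal would be to show the boundary contribution is the same for both immersions.
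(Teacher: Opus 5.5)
Your architecture matches the paper's. You use the same mixed problem: Dirichlet data $\psi'$ on $\Sigma$ and constant Neumann data $\partial_N f=\hat N$ on $T$, with $\hat N$ oriented so that $\langle\hat N,\psi'_*\mu\rangle=\sin\theta$; this is your $N'$. The $T$-terms reduce in the same way to $\int_\Gamma\langle\hat N,\partial_{\bar\nu}f\rangle\,ds$, and you complete the same Ros-type square on $\Sigma$.

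The gap is at the step you yourself call the crux. You never show that the corner residue $\mathcal R=-\langle\vec u,\psi'_*\mu\rangle+\langle\hat N,\partial_{\bar\nu}f\rangle$ has a sign, and the route you sketch cannot give one. Decomposing only $\nu$ gives $\vec u=-\cos\theta\,\hat N+\sin\theta\,\partial_{\bar\nu}f$, and hence
\[
\mathcal R=\sin\theta\cos\theta+\bigl\langle\partial_{\bar\nu}f,\ \hat N-\sin\theta\,\psi'_*\mu\bigr\rangle ,
\]
in which $\partial_{\bar\nu}f$ is still an unknown vector. Cauchy--Schwarz with $|\hat N-\sin\theta\,\psi'_*\mu|=\cos\theta$ gives at best $\mathcal R\ge\cos\theta\,(\sin\theta-|\partial_{\bar\nu}f|)$. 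In fact $|\partial_{\bar\nu}f|=1$, so this lower bound is strictly negative for $\theta<\pi/2$. Your fallback, comparing with $\psi$ through the first variation formula, is too vague to count as an argument.

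The missing idea is to use $f\in C^1(\overline\Omega)$ pointwise at the corner. Along $\Gamma$, $\bar\nabla f$ is a single vector, and its normal-plane part can be read in either frame of \eqref{eq:capillarity}. The Dirichlet data fix $\partial_\mu f=\psi'_*\mu$, and the Neumann data fix $\partial_N f=\hat N$. Since $\mu=\sin\theta\,N+\cos\theta\,\bar\nu$, for $\theta<\pi/2$ these two facts determine the whole normal-plane gradient:
\[
\partial_{\bar\nu}f=\frac{\psi'_*\mu-\sin\theta\,\hat N}{\cos\theta},\qquad
\vec u=\frac{\sin\theta\,\psi'_*\mu-\hat N}{\cos\theta}.
\]
Substituting gives
\[
\cos\theta\,\mathcal R=-\sin\theta\bigl(|\psi'_*\mu|^2+|\hat N|^2\bigr)+2\langle\hat N,\psi'_*\mu\rangle=0
\]
identically. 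This uses only $|\psi'_*\mu|=|\hat N|=1$ and $\langle\hat N,\psi'_*\mu\rangle=\sin\theta$.

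For $\theta=\pi/2$ one has $\mu=N$ and $\nu=\bar\nu$. Hence $\psi'_*\mu=\hat N$ and $\vec u=\partial_{\bar\nu}f$, and again $\mathcal R=0$. So the residue vanishes exactly rather than being merely nonnegative, and this is where the $C^1$ regularity at the corner is really used.

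One further point: the final isometry step needs a point with $H_\Sigma>0$. The paper obtains it by noting that if $H_\Sigma\equiv0$, the height over $S$ would be harmonic on $\Sigma$ with zero boundary values, forcing $\Sigma\subset S$.
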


\begin{proof}
The first immersion $\psi$, together with the hyperplane $S$, bounds a domain
$\Omega\subset\R^{n+1}$ with $\partial\Omega={\Sigma}\cup T$, where we identify $\Sigma$ with $\psi(\Sigma)$
and $T\subset S$ is a region of the hyperplane. We use the notation of \eqref{eq:capillarity} for the outward vector fields.

Following \cite{Ros88}, the idea is to extend the coordinates of the isometric
immersion $\psi'\colon\Sigma^n\to\R^{n+p}$ harmonically, but now we must take the boundary
into account. Let $\psi'\colon\Sigma^n\to\R^{n+p}$ be an isometric immersion as in the
statement of the theorem, with $\psi'(\Gamma)\subset\hat S$, a hyperplane in
$\R^{n+p}$ with unit normal $\hat N$. For each $i=1,\ldots,n+p$, let $F_i$ be the
solution of the mixed boundary value problem
\begin{equation*}
  \begin{cases}
    \bar\Delta F_i = 0 &\text{in }\Omega,\\
    F_i = \psi'_i\circ\psi^{-1} &\text{on }{\Sigma},\\
    \partial_N F_i = \hat N_i &\text{on }T.
  \end{cases}
\end{equation*}
The natural idea is to show that $F=(F_1,\ldots,F_{n+p})$ is a rigid motion, the
Neumann condition being precisely what forces this rigid motion to send the
normal of the hyperplane $S$ to the normal of the hyperplane $\hat S$.

Reilly's formula~\cite{Reilly77} gives
\begin{equation}\label{eq:Reilly-hyperplane}
  -\int_\Omega|\bar\nabla^2 F_i|^2\,d\Omega
  = \int_{\Sigma}\mathcal{I}_i^{\Sigma}\,dA + \int_T\mathcal{I}_i^T\,dA_T.
\end{equation}
On ${\Sigma}$, with restriction $z_i:=F_i|_{\Sigma}$, normal derivative
$u_i:=\partial_\nu F_i$, mean curvature $H_\Sigma$ and second fundamental form
$h^\Sigma$, the integrand is
\begin{equation}\label{eq:Reilly-Sigma}
  \mathcal{I}_i^{\Sigma} = u_i\,\Delta_\Sigma z_i
  - g(\nabla^\Sigma u_i,\,\nabla^\Sigma z_i)
  + H_\Sigma\,u_i^2
  + h^\Sigma(\nabla^\Sigma z_i,\,\nabla^\Sigma z_i).
\end{equation}
On the supporting region $T$, with restriction $w_i:=F_i|_T$, normal derivative
$v_i:=\partial_N F_i$, mean curvature $H_T$ and second fundamental form $h^T$,
the integrand is
\begin{equation}\label{eq:Reilly-T}
  \mathcal{I}_i^T = v_i\,\Delta_T w_i
  - g(\nabla^T v_i,\,\nabla^T w_i)
  + H_T\,v_i^2
  + h^T(\nabla^T w_i,\,\nabla^T w_i).
\end{equation}

We first study the terms on the flat region $T$. On $T\subset S$ we have
$h^T\equiv 0$ (totally geodesic) and $H_T=0$, and the normal derivative is
$v_i=\partial_N F_i=\hat N_i$. Since $\hat N_i$ is constant, the gradient
satisfies $\nabla^T v_i=\nabla^T\hat N_i=0$. The restriction $w_i=F_i|_T$ is
not prescribed but is determined by the solution, and a priori we do not know
that $F|_T$ is an isometric immersion. The four terms
in~\eqref{eq:Reilly-T} thus reduce to
\begin{equation*}
  \mathcal{I}_i^T = \hat N_i\,\Delta_T w_i.
\end{equation*}
By Green's formula on $T$,
\begin{equation*}
  \int_T\hat N_i\,\Delta_Tw_i\,dA_T
  = \hat N_i\int_T\Delta_Tw_i\,dA_T
  = \hat N_i\int_\Gamma\partial_{\bar\nu}w_i\,ds
  = \hat N_i\int_\Gamma\partial_{\bar\nu}F_i\,ds.
\end{equation*}
Summing over $i$,
\begin{equation}\label{eq:T-sum}
  \sum_i\int_T\mathcal{I}_i^T\,dA_T
  = \int_\Gamma g(\hat N,\,\partial_{\bar\nu}F)\,ds
  = \int_\Gamma g(\hat N,\,F_*\bar\nu)\,ds.
\end{equation}

On the region ${\Sigma}$ we have $z_i=F_i|_{\Sigma}=\psi'_i\circ\psi^{-1}$ and
$u_i=\partial_\nu F_i$. We write $\mathbf u:=(u_1,\ldots,u_{n+p})=\partial_\nu F|_{\Sigma}\in\R^{n+p}$.
Integrating by parts on ${\Sigma}$ as in \eqref{eq:reilly-corner-prelim} gives
\begin{equation}\label{eq:Sigma-after-IPP}
  \int_{\Sigma}\mathcal{I}_i^{\Sigma}\,dA
  = \int_{\Sigma}\bigl[2u_i\Delta_\Sigma z_i + H_\Sigma u_i^2
  + h^\Sigma(\nabla^\Sigma z_i,\nabla^\Sigma z_i)\bigr]\,dA
  - \int_\Gamma u_i\,\partial_\mu z_i\,ds.
\end{equation}
Combining~\eqref{eq:Reilly-hyperplane},~\eqref{eq:T-sum} and~\eqref{eq:Sigma-after-IPP},
\begin{equation}\label{eq:full-hyperplane}
  -\sum_i\int_\Omega|\bar\nabla^2 F_i|^2
  = \int_{\Sigma}\sum_i\bigl[2u_i\Delta_\Sigma z_i + H_\Sigma u_i^2
  + h^\Sigma(\nabla^\Sigma z_i,\nabla^\Sigma z_i)\bigr]\,dA + \int_\Gamma\mathcal{R}\,ds,
\end{equation}
where the corner residue is
\begin{equation*}
  \mathcal{R} = -g(\mathbf u,\,\psi'_*\mu) + g(\hat N,\,\partial_{\bar\nu}F)
  = -g(\mathbf u,\,\psi'_*\mu) + g(\hat N,\,F_*{\bar\nu}).
\end{equation*}
Here $\mathbf u=(u_1,\ldots,u_{n+p})=F_*\nu$ and we used that
$\psi'_*\mu=(\partial_\mu z_1,\ldots,\partial_\mu z_{n+p})$.

It remains to show that $\mathcal{R}$ is identically zero. The
projections of $\bar\nabla F_i$ onto the two bases $\{\nu,\mu\}$ and $\{N,\bar\nu\}$
of the normal plane to $\Gamma$ are
\begin{equation}\label{eq:proj-nu-mu}
  g(\bar\nabla F_i,\,\nu) = u_i, \qquad
  g(\bar\nabla F_i,\,\mu) = \partial_\mu z_i = (\psi'_*\mu)_i,
\end{equation}
\begin{equation}\label{eq:proj-N-nubar}
  g(\bar\nabla F_i,\,N) = \hat N_i, \qquad
  g(\bar\nabla F_i,\,\bar\nu) = \partial_{\bar\nu}F_i.
\end{equation}
Using the change of basis~\eqref{eq:capillarity} we obtain
\begin{align}
  u_i &= -\cos\theta\,\hat N_i + \sin\theta\,\partial_{\bar\nu}F_i,
    \label{eq:u-decomp}\\
  (\psi'_*\mu)_i &= \sin\theta\,\hat N_i + \cos\theta\,\partial_{\bar\nu}F_i.
    \label{eq:dpsi-decomp}
\end{align}
When $\theta=\frac{\pi}{2}$ we have $\mu=N$ and $\nu=\bar\nu$, and comparing the
projections~\eqref{eq:proj-nu-mu} and~\eqref{eq:proj-N-nubar} of $\bar\nabla F_i$
yields, for every $i$,
\[
  (\psi'_*\mu)_i = g(\bar\nabla F_i,\,\mu) = g(\bar\nabla F_i,\,N) = \hat N_i,
  \qquad
  u_i = g(\bar\nabla F_i,\,\nu) = g(\bar\nabla F_i,\,\bar\nu) = \partial_{\bar\nu}F_i,
\]
that is, $\psi'_*\mu=\hat N$ and $\mathbf u=\partial_{\bar\nu}F$. Substituting into the
residue,
\[
  \mathcal{R} = -g(\mathbf u,\,\psi'_*\mu) + g(\hat N,\,\partial_{\bar\nu}F)
  = -g(\mathbf u,\,\hat N) + g(\hat N,\,\mathbf u) = 0.
\]
We may now suppose $\theta\in(0,\frac{\pi}{2})$. From~\eqref{eq:dpsi-decomp} we
solve for $\partial_{\bar\nu}F_i$,
\begin{equation}\label{eq:nubar-solve}
  \partial_{\bar\nu}F_i = \frac{(\psi'_*\mu)_i - \sin\theta\,\hat N_i}{\cos\theta},
\end{equation}
and substituting into~\eqref{eq:u-decomp},
\begin{equation}\label{eq:U-solve}
  u_i = \frac{\sin\theta\,(\psi'_*\mu)_i - \hat N_i}{\cos\theta}.
\end{equation}
Using~\eqref{eq:U-solve} and~\eqref{eq:nubar-solve} in the two terms of
$\mathcal{R}$,
\begin{align*}
  g(\mathbf u,\,\psi'_*\mu) &= \sum_i u_i\,(\psi'_*\mu)_i
    = \frac{\sin\theta\,|\psi'_*\mu|^2 - g(\hat N,\,\psi'_*\mu)}{\cos\theta},
    \\[3pt]
  g(\hat N,\,\partial_{\bar\nu}F) &= \sum_i\hat N_i\,\partial_{\bar\nu}F_i
    = \frac{g(\hat N,\,\psi'_*\mu) - \sin\theta\,|\hat N|^2}{\cos\theta}.
\end{align*}
Therefore
\begin{align*}
  \cos\theta\cdot\mathcal{R}
    &= -\bigl(\sin\theta\,|\psi'_*\mu|^2 - g(\hat N,\,\psi'_*\mu)\bigr)
    + \bigl(g(\hat N,\,\psi'_*\mu) - \sin\theta\,|\hat N|^2\bigr)\\
    &= -\sin\theta\bigl(|\psi'_*\mu|^2+|\hat N|^2\bigr)
    + 2g(\hat N,\,\psi'_*\mu). 
\end{align*}
We now use the following three facts.
\begin{enumerate}
  \item $|\psi'_*\mu|=1$, since $\psi'$ is an isometric immersion and $\mu$ is a
        unit vector.
  \item $|\hat N|=1$, the unit normal to $\hat S$.
  \item $g(\hat N,\,\psi'_*\mu)=\sin\theta$, since $\psi'$ is $\theta$-capillary
        in $\hat S$, so the component of $\psi'_*\mu$ normal to $\hat S$ is
        $\sin\theta$.
\end{enumerate}
Hence
\begin{equation*}
  \cos\theta\cdot\mathcal{R} = -\sin\theta(1+1)+2\sin\theta
  = -2\sin\theta+2\sin\theta = 0,
\end{equation*}
so that $\mathcal{R}=0$.

The remaining terms on ${\Sigma}$ are handled as in Theorem~2 of
\cite{Ros88}. Summing the three interior terms on ${\Sigma}$
in~\eqref{eq:full-hyperplane}, we have
\[
  \sum_i u_i\Delta_\Sigma z_i = g(\mathbf u,\,\Delta_\Sigma\psi') = g(\mathbf u,\,\vec H'),
\]
where $\vec H'$ is the mean curvature vector of $\psi'$,
\[
  \sum_i H_\Sigma u_i^2 = H_\Sigma|\mathbf u|^2,
\]
and, for an orthonormal frame $\{e_\alpha\}$ on $\Sigma$, the isometry of $\psi'$ gives
\[
  \sum_i h^\Sigma(\nabla^\Sigma z_i,\nabla^\Sigma z_i)
  = \sum_{\alpha,\beta}h^\Sigma(e_\alpha,e_\beta)
  \underbrace{\sum_i (\psi'_*e_\alpha)_i\,(\psi'_*e_\beta)_i}
  _{=\,g(\psi'_*e_\alpha,\,\psi'_*e_\beta)\,=\,\delta_{\alpha\beta}}
  = \sum_\alpha h^\Sigma(e_\alpha,e_\alpha) = H_\Sigma.
\]
Completing the square, and using $g(\mathbf u,\,\vec H')\geq -|\mathbf u|\,|\vec H'|\geq -|\mathbf u|\,H_\Sigma$
from the hypothesis $|\vec H'|\leq H_\Sigma$,
\begin{align*}
  2g(\mathbf u,\,\vec H')+H_\Sigma|\mathbf u|^2+H_\Sigma
    &\geq -2H_\Sigma|\mathbf u|+H_\Sigma|\mathbf u|^2+H_\Sigma\\
    &= H_\Sigma\bigl(|\mathbf u|^2-2|\mathbf u|+1\bigr) = H_\Sigma(|\mathbf u|-1)^2\geq 0.
\end{align*}
From~\eqref{eq:full-hyperplane} with $\mathcal{R}=0$,
\begin{equation*}
  -\sum_i\int_\Omega|\bar\nabla^2 F_i|^2
  \;\geq\; \int_{\Sigma} H_\Sigma(|\mathbf u|-1)^2\,dA.
\end{equation*}
Since $H_\Sigma$ is nonnegative, both sides therefore vanish, giving the following.
\begin{itemize}
  \item $\bar\nabla^2 F_i=0$ in $\Omega$ for every $i$, so $F(x)=Bx+b$ with
        $B\colon\R^{n+1}\to\R^{n+p}$ linear and $b\in\R^{n+p}$.
  \item $|\mathbf u|=1$ at every point where $H_\Sigma>0$. At each such point, equality in the
        Cauchy-Schwarz estimate forces $\mathbf u$ to be parallel to $\vec H'$, hence
        normal to $\psi'(\Sigma)$.
\end{itemize}

It remains to check that $B$ is a linear isometry. Fix a point $p\in{\Sigma}$ with
$H_\Sigma>0$. Such a point exists, since otherwise the height function relative to $S$
would be harmonic with zero boundary data, forcing $\psi(\Sigma)\subset S$
and contradicting the positive contact angle. On ${\Sigma}$ we have $F|_{\Sigma}=\psi'\circ\psi^{-1}$, an
isometric immersion, so $B=F_*$ carries $T_p{\Sigma}$ isometrically onto its image.
Writing $\mathbf u=F_*\nu=B\nu$ for the outward unit normal $\nu$ of ${\Sigma}$, the relation
$|\mathbf u|=1$ says that $B$ preserves the norm of $\nu$, while the fact that $\mathbf u$ is
parallel to $\vec H'$ says that $B\nu$ is normal to the image of $\psi'_*$,
hence orthogonal to $B(T_p{\Sigma})$. Thus $B$ sends the orthonormal basis
$\{e_1,\ldots,e_n,\nu\}$ of $\R^{n+1}$, where $\{e_\alpha\}$ is an orthonormal
basis of $T_p{\Sigma}$, to an orthonormal set in $\R^{n+p}$. Therefore $B$ is a
linear isometry. Consequently $\psi'=B\circ\psi+b$, and the two immersions are congruent.
\end{proof}

For a hyperplane support, the proof itself shows that the extension is
isometric on $T$. For a spherical support, we need first to obtain an isometric
extension on each component of $T$ using Theorem~A in
\cite{FreitasGuimaraesCongruence}. The following lemma provides the required
comparison of boundary mean curvatures that allows us to do this.

\begin{lemma}\label{lem:capillary-boundary}
  Let \(\psi,\psi'\colon \Sigma^n\to\R^{n+1}\) be
  isometric immersions of a compact Riemannian manifold with
  boundary, both \(\theta\)-capillary in the unit sphere
  \(\Sph^n\) with \(\theta\in(0,\pi/2]\).  Denote by
  \(H_\Gamma^T\) and
  \(H_\Gamma^{T\,\prime}\) the mean curvatures
  of \(\Gamma^{n-1}\) as a hypersurface of \(\Sph^n\)
  under \(\psi\) and \(\psi'\), respectively.  Then
  \begin{enumerate}
    \item[(i)]\label{it:notFree}
          if \(\theta\neq\pi/2\), then
          \(H_\Gamma^T
           =H_\Gamma^{T\,\prime}\).
    \item[(ii)]\label{it:free}
          if \(\theta=\pi/2\) and both immersions are minimal, then
          \(\bigl(H_\Gamma^T\bigr)^{2}
           =\bigl(H_\Gamma^{T\,\prime}\bigr)^{2}\).
  \end{enumerate}
\end{lemma}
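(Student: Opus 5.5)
The plan is to compute the mean curvature of $\Gamma$ inside $\Sph^n$ from data intrinsic to $(\Sigma,g)$ together with the angle $\theta$. Since $\psi$ and $\psi'$ induce the same metric on $\Sigma$ and have the same contact angle, the two values will then agree. Throughout I work with one immersion, say $\psi$, and use the frames of \eqref{eq:capillarity}. On $\Gamma\subset\Sph^n$ the outward normal of the support is the position vector, $N=x$. The normal plane of $\Gamma$ in $\R^{n+1}$ is spanned by $\{N,\bar\nu\}$ and also by $\{\nu,\mu\}$. I decompose the mean curvature vector $\vec H_\Gamma=\Delta_\Gamma x$ of $\Gamma\subset\R^{n+1}$ in both bases.

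For (i), I use three components of $\vec H_\Gamma$.
\begin{itemize}
\item Since $|x|^2=1$ on $\Gamma$, we have $0=\tfrac12\Delta_\Gamma|x|^2=(n-1)+g(x,\Delta_\Gamma x)$, so $g(\vec H_\Gamma,N)=-(n-1)$.
\item By definition of $h^T_\Gamma$ (up to the sign fixed by the convention $h(X,Y)=g(\bar\nabla_X\eta,Y)$), $g(\vec H_\Gamma,\bar\nu)=-H_\Gamma^T$.
\item Likewise $g(\vec H_\Gamma,\mu)=-H^\Sigma_\Gamma$. This is the geodesic mean curvature of $\partial\Sigma$ in $(\Sigma,g)$, so it is intrinsic and coincides for $\psi$ and $\psi'$.
\end{itemize}
Using $\mu=\sin\theta\,N+\cos\theta\,\bar\nu$, these combine to
\[
H^\Sigma_\Gamma=(n-1)\sin\theta+\cos\theta\,H^T_\Gamma .
\]
Hence, for $\theta\neq\pi/2$, $H_\Gamma^T=\bigl(H^\Sigma_\Gamma-(n-1)\sin\theta\bigr)/\cos\theta$. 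The same identity holds for $\psi'$ with the same $\theta$ and the same intrinsic $H^\Sigma_\Gamma$, which gives $H_\Gamma^T=H_\Gamma^{T\,\prime}$.

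For (ii) the identity above degenerates, since $\mu=N$ and the $\mu$-component carries no information. Instead I use the $\nu$-component. Now $\nu=\bar\nu$, so $H_\Gamma^T=\pm\operatorname{tr}_\Gamma h^\Sigma|_{T\Gamma}$, and minimality gives $H^T_\Gamma=\mp h^\Sigma(\mu,\mu)$. It then suffices to show that $h^\Sigma(\mu,\mu)^2$ is intrinsic.
\begin{itemize}
\item Differentiating $g(\nu,x)=0$ along $e\in T\Gamma$ gives $h^\Sigma(e,\mu)=g(\bar\nabla_e\nu,x)=-g(\nu,e)=0$. So $\mu$ is a principal direction along $\Gamma$.
\item The Gauss equation with $H_\Sigma=0$ then gives $\Ric_\Sigma(\mu,\mu)=H_\Sigma h^\Sigma(\mu,\mu)-|h^\Sigma(\mu,\cdot)|^2=-h^\Sigma(\mu,\mu)^2$.
\end{itemize}
Therefore $(H_\Gamma^T)^2=-\Ric_\Sigma(\mu,\mu)$. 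The same holds for $\psi'$, and the Ricci tensor of $\Sigma$ is intrinsic, which proves (ii).

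I expect the main obstacle to be sign bookkeeping rather than any conceptual step. One must check the sign of $g(\vec H_\Gamma,\bar\nu)$ against the convention $h(X,Y)=g(\bar\nabla_X\eta,Y)$ with outward $\bar\nu$, and confirm that $H^\Sigma_\Gamma$ is computed with the intrinsic conormal $\mu$, so that it is genuinely the same for both immersions. These signs only affect the explicit formula, not the equality of the two sides. In (ii) only squares appear, so signs are harmless there.
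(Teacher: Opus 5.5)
Your proposal is correct and is essentially the paper's proof. For (i), the paper differentiates $\mu=\sin\theta\,N+\cos\theta\,\bar\nu$ along $\Gamma$ to get $h^\Sigma_\Gamma=\cos\theta\,h^T_\Gamma+\sin\theta\,g$ and traces it; this is the same identity you obtain by decomposing $\vec H_\Gamma$ in the two normal frames. For (ii), the paper likewise shows $h^\Sigma(X,\mu)=0$ by differentiating $g(\nu,N)=-\cos\theta$ and then uses the Gauss equation with minimality to get $\Ric^\Sigma(\mu,\mu)=-(H^T_\Gamma)^2$, just as you do.
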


\begin{proof}
  We express the boundary mean curvature in terms of the intrinsic
  geometry of $\Sigma$ and the contact angle.

  For \(X,Y\in T(\Gamma)\), differentiating
  the capillarity relations~\eqref{eq:capillarity} and using
  that \(h^T(X,Y)
  =g(X,Y)\) yields
  \begin{equation}\label{eq:hMR}
    h^\Sigma(X,Y)
      = \sin\theta\;h_\Gamma^T(X,Y)
        - \cos\theta\,g(X,Y),
  \end{equation}
  \begin{equation}\label{eq:hGammaM}
    h_\Gamma^\Sigma(X,Y)
      = \cos\theta\;h_\Gamma^T(X,Y)
        + \sin\theta\,g(X,Y).
  \end{equation}
  Taking the trace of~\eqref{eq:hGammaM} over
  \(T(\Gamma)\) gives
  \begin{equation*}
    H_\Gamma^\Sigma
      = \cos\theta\;H_\Gamma^T
        + (n-1)\sin\theta.
  \end{equation*}
  If \(\theta\neq\pi/2\), then
  \[
    H_\Gamma^T
      = \frac{H_\Gamma^\Sigma-(n-1)\sin\theta}
             {\cos\theta}\,,
  \]
  where the right hand side depends only on the intrinsic
  geometry of \(\Sigma^n\) and on~\(\theta\).  In particular it is
  the same for \(\psi\) and~\(\psi'\), so
  \(H_\Gamma^T
   =H_\Gamma^{T\,\prime}\).

  It remains to consider the free boundary case
  \(\theta=\pi/2\).  We first observe that
  \(h^\Sigma(X,\mu)=0\) for every
  \(X\in T(\Gamma)\).  Indeed, since
  \(g(\nu,N)=-\cos\theta\) is constant along
  \(\Gamma^{n-1}\), differentiating in the direction
  of~\(X\) gives
  \[
    g( \bar\nabla_X\nu,N)
    + g(\nu,\bar\nabla_X N) = 0.
  \]
  Because \(N=\psi\) is the position vector on
  \(\Sph^n\) we have
  \(\bar\nabla_X N=X\), and
  \(g(\nu,X)=0\) since \(\nu\) is the normal vector field of~\(\Sigma^n\).  Hence
  \(g(\bar\nabla_X\nu,N)=0\).
  Inverting the capillarity relations gives
  \(N=\sin\theta\,\mu-\cos\theta\,\nu\), so
  \[
    \sin\theta\,
    g(\bar\nabla_X\nu,\mu)
    -\cos\theta\,
    \underbrace{
      g(\bar\nabla_X\nu,\nu)
    }_{=\,0}
    = 0.
  \]
  As \(\sin\theta\neq 0\) we conclude
  \(g(\bar\nabla_X\nu,\mu)=0\),
  which means
  \(h^\Sigma(X,\mu)=0\).

  Along \(\Gamma^{n-1}\) consider the orthonormal frame
  \(\{e_1,\dots,e_{n-1},\mu\}\) of \(T\Sigma\), where
  \(e_i\in T(\Gamma)\).  The mean curvature
  of~\(\Sigma^n\) decomposes as
  \[
    H_\Sigma
      = \sum_{i=1}^{n-1}
        h^\Sigma(e_i,e_i)
        + h^\Sigma(\mu,\mu).
  \]
  Setting \(\theta=\pi/2\) in~\eqref{eq:hMR} gives
  \(h^\Sigma(e_i,e_i)
   =h_\Gamma^T(e_i,e_i)\),
  so
  \(\sum_{i}h^\Sigma(e_i,e_i)
   =H_\Gamma^T\).
  Since $h^\Sigma(e_i,\mu)=0$, the Gauss equation gives the identity
  \[
    \Ric^\Sigma(\mu,\mu)
    =h^\Sigma(\mu,\mu)H_\Gamma^T
    =(H_\Sigma-H_\Gamma^T)H_\Gamma^T.
  \]
  If the immersion is minimal, this becomes
  \[
    \Ric^\Sigma(\mu,\mu)=-\bigl(H_\Gamma^T\bigr)^2.
  \]
  The left hand side is intrinsic, which proves item~\textit{(ii)}.
\end{proof}

We are now in a position to prove Theorem~\ref{thm:free-boundary-sphere}.

\begin{theoremB}\label{thm:free-boundary-sphere}
Let $\psi\colon \Sigma^n\to\R^{n+1}$ be a compact embedded minimal hypersurface
with $\psi(\Sigma)\subset\overline{\mathbb{B}}^{\,n+1}$ and free boundary in the
unit sphere $\Sph^n$, and set $\Gamma:=\partial\Sigma$. Assume that each
connected component of $\Gamma$ bounds a mean convex domain of $\Sph^n$
contained in an open hemisphere, and that these domains are pairwise disjoint.
We denote their union by $T\subset\Sph^n$, so that $\partial T=\Gamma$. If
$\psi'\colon \Sigma^n\to\R^{n+1}$ is a minimal isometric immersion with free
boundary in $\Sph^n$, then $\psi'$ differs from $\psi$ by a rigid motion.
\end{theoremB}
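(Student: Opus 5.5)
The plan is to reduce to the scheme of Theorem~\ref{thm:CongruenceTotGeo}. I first fix the rotation on each piece of the support using Lemma~\ref{lem:capillary-boundary}, then solve a pure Dirichlet problem and apply Reilly's formula (Theorem~\ref{thm:reilly-classical}).

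\emph{Step 1: boundary congruence.} Let $\Gamma_1,\dots,\Gamma_k$ be the components of $\Gamma$ and $T_j\subset\Sph^n$ the mean convex domain bounded by $\Gamma_j$, so that $T=\bigcup_j T_j$ and $H^T_{\Gamma}\geq 0$ on each $\Gamma_j$. Since $\psi,\psi'$ are both minimal with free boundary, Lemma~\ref{lem:capillary-boundary}\,(ii) gives $(H^{T\,\prime}_\Gamma)^2=(H^T_\Gamma)^2$, hence $|H^{T\,\prime}_\Gamma|\leq H^T_\Gamma$. Moreover $\psi'|_{\Gamma_j}\colon\Gamma_j\to\Sph^n$ is an isometric immersion of a closed manifold, with $\psi|_{\Gamma_j}$ embedded, bounding a mean convex domain in an open hemisphere.

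Theorem~A of \cite{FreitasGuimaraesCongruence} then yields $R_j\in O(n+1)$ with $\psi'|_{\Gamma_j}=R_j\circ\psi|_{\Gamma_j}$. Here I must check the orientation and sign conventions, and that the codimension used there is compatible. I regard this matching of hypotheses as the main obstacle, together with justifying Reilly's formula at the corner $\Gamma$; for the latter I would argue as in Theorem~\ref{thm:CongruenceTotGeo}, assuming the needed regularity of the Dirichlet solution at the edge.

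\emph{Step 2: Reilly's formula.} Let $F\colon\overline\Omega\to\R^{n+1}$ be harmonic, with $F=\psi'\circ\psi^{-1}$ on $\Sigma$ and $F(x)=R_jx$ on $T_j$. The data are continuous by Step~1. Set $z=F|_\Sigma$, $\mathbf u=\partial_\nu F$, $w=F|_T$ and $v=\partial_NF$.

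On $\Sigma$ we have $\Delta_\Sigma z=\vec H'=0$, $H_\Sigma=0$, and $\sum_i h^\Sigma(\nabla z_i,\nabla z_i)=H_\Sigma=0$. So by \eqref{eq:reilly-corner-prelim} the $\Sigma$-contribution is only $-\int_\Gamma g(\mathbf u,\psi'_*\mu)\,ds$.

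On $T$ we have $h^T=g$ and $H_T=n$, and $w=R_jx$ restricted to the unit sphere gives $\Delta_Tw=-nw$, $|w|=1$ and $\sum_i|\nabla^Tw_i|^2=n$. Integrating $-\int_T g(\nabla v,\nabla w)$ by parts, the $T$-contribution becomes
\[
n\int_T\bigl(|v|^2-2g(v,w)+1\bigr)\,dA_T-\int_\Gamma g(v,\partial_{\bar\nu}w)\,ds
=n\int_T|v-w|^2\,dA_T-\int_\Gamma g(v,\partial_{\bar\nu}w)\,ds .
\]

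\emph{Step 3: corner terms.} Since $\theta=\pi/2$, we have $\mu=N$ and $\nu=\bar\nu$ along $\Gamma$. Hence $\psi'_*\mu=\partial_\mu F=v$ and $\partial_{\bar\nu}w=\mathbf u$, so the total corner term is $-2\int_\Gamma g(\mathbf u,v)\,ds$.

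The free boundary condition for $\psi'$ in the unit sphere gives $\psi'_*\mu=\psi'$, i.e.\ $v=w=R_jx$ on $\Gamma_j$. On the other hand $\mathbf u=\partial_{\bar\nu}w=R_j\bar\nu$ is tangent to the sphere at $R_jx$. Thus $g(\mathbf u,v)=0$, and Reilly's formula reduces to
\[
-\int_\Omega|\bar\nabla^2F|^2\,d\Omega=n\int_T|v-w|^2\,dA_T\geq0 .
\]

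\emph{Step 4: conclusion.} Both sides of the last identity vanish, so $F(x)=Bx+b$ is affine. On each open set $T_j\subset\Sph^n$ we have $Bx+b=R_jx$, which forces $b=0$ and $B=R_j$ (a nonempty open subset of the sphere spans affinely). Consequently all $R_j$ coincide, and $\psi'=B\circ\psi$ with $B\in O(n+1)$, which is the claimed rigid motion.
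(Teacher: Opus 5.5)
Your proposal is correct and follows the paper's proof step by step. It uses Lemma~\ref{lem:capillary-boundary}(ii) and Theorem~A of \cite{FreitasGuimaraesCongruence} to obtain $R_j$ on each component, a pure Dirichlet harmonic extension, and the square $n|v-w|^2$ on $T$; the corner term $-2\int_\Gamma g(\mathbf u,v)\,ds$ vanishes because $\mathbf u$ is tangent and $v$ normal to $\Sph^n$. The only differences are minor: you get $b=0$ and $B=R_j$ directly from the affine span of an open piece of the sphere, whereas the paper uses $v=G$ and then checks orthogonality at a point, and the free boundary condition gives only $\psi'_*\mu=\pm\psi'$, which is all your orthogonality claim needs.
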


\begin{proof}
Let $\Omega$ be the domain bounded by $\Sigma\cup T$, with the
orientations of Section~\ref{sec:preliminaries}. On $T$ the outward unit
normal is $N=x$. The free boundary condition gives
\begin{equation}\label{eq:fb}
  \mu=N,\qquad \nu=\bar\nu\qquad\text{along }\Gamma.
\end{equation}
By Lemma~\ref{lem:capillary-boundary}(ii),
$\bigl(H_\Gamma^T\bigr)^2=\bigl(H_\Gamma^{T\,\prime}\bigr)^2$.
Since $H_\Gamma^T\geq0$, we have
$H_\Gamma^T=|H_\Gamma^{T\,\prime}|$.
Applying Theorem~A in \cite{FreitasGuimaraesCongruence} to each connected
component of $\Gamma$ gives a map $G\colon T\to\Sph^n$ whose restriction
to each connected component of $T$ is the restriction of an ambient
isometry of $\Sph^n$, and
\[
G|_\Gamma=\psi'\circ\psi^{-1}|_\Gamma.
\]

We extend the coordinates of $\psi'$ harmonically, prescribing $G$ on
$T$ to control the second fundamental form term in Reilly's formula.
For $i=1,\ldots,n+1$, let $F_i$ be the solution of
\[
  \begin{cases}
    \bar\Delta F_i=0 & \text{in }\Omega,\\
    F_i=\psi'_i\circ\psi^{-1} & \text{on }\Sigma,\\
    F_i=G_i & \text{on }T.
  \end{cases}
\]
The data agree on $\Gamma$. Existence, uniqueness and the regularity
needed below follow from Remark~\ref{rem:congruence-dirichlet-regularity}.
Since $F|_\Sigma=\psi'\circ\psi^{-1}$, it remains to prove that $F$ is a
rigid motion.

Reilly's formula gives
\begin{equation}\label{eq:Reilly-free-sphere}
  -\int_\Omega|\bar\nabla^2 F_i|^2\,d\Omega
  = \int_{\Sigma}\mathcal{I}_i^\Sigma\,dA + \int_T\mathcal{I}_i^T\,dA_T,
\end{equation}
where
\[ \mathcal{I}^{\Sigma}_{i} = u_i\Delta_\Sigma z_i-g(\nabla^\Sigma u_i,\nabla^\Sigma z_i)+H_\Sigma u_i^2
  +h^\Sigma(\nabla^\Sigma z_i,\nabla^\Sigma z_i),\]
\[ \mathcal{I}^{T}_{i} = v_i\Delta_T w_i-g(\nabla^T v_i,\nabla^T w_i)+H_T v_i^2
  +h^T(\nabla^T w_i,\nabla^T w_i),\]
with $z_i = F_i|_{\Sigma} = \psi'_i\circ\psi^{-1}$, normal derivative $u_i = \partial_\nu F_i$,
mean curvature $H_\Sigma$ and second fundamental form $h^\Sigma$ of $\Sigma^n$ in $\R^{n+1}$,
and with $w_i = F_i|_{T} = G_i$, normal derivative $v_i = \partial_N F_i$, mean
curvature $H_T$ and second fundamental form $h^T$ of $T$ in $\R^{n+1}$. Applying \eqref{eq:reilly-corner-prelim} on $\Sigma$ and its analogue
on $T$ gives the interior terms with $2u_i\Delta_\Sigma z_i$ and
$2v_i\Delta_Tw_i$, together with the corresponding integrals over $\Gamma$.

Summing over $i$, the interior terms are computed exactly as in
Theorem~\ref{thm:CongruenceTotGeo}. On ${\Sigma}$ we have
$z_i=\psi'_i\circ\psi^{-1}$ and $u_i=\partial_\nu F_i$, and we write
$\mathbf u:=F_*\nu$. Since $\psi'$ is a minimal immersion,
$\Delta_\Sigma\psi'=0$, and since ${\Sigma}$ is minimal, $H_\Sigma=0$. Hence
\[
\begin{gathered}
  \sum_i 2u_i\Delta_\Sigma z_i = 2g(\mathbf u,\,\Delta_\Sigma\psi') = 0,
  \\
  \sum_i H_\Sigma u_i^2 = H_\Sigma|\mathbf u|^2 = 0,
  \\
  \sum_i h^\Sigma(\nabla^\Sigma z_i,\nabla^\Sigma z_i) = \operatorname{tr}h^\Sigma = H_\Sigma = 0,
\end{gathered}
\]
the first vanishing by minimality of $\psi'$ and the other two by
minimality of ${\Sigma}$. The integral over ${\Sigma}$ therefore contributes only its
corner term,
\[
  \sum_i\int_{\Sigma}\mathcal{I}_i^\Sigma
  = -\int_\Gamma g(\mathbf u,\,\psi'_*\mu)\,ds.
\]
On $T$ we have $w_i=G_i$ and $v_i=\partial_N F_i$, and we write $\mathbf v:=F_*N$.
Since $T\subset\Sph^n$ is totally umbilical, $h^T=g_T$ and
$H_T=n$, and since $G$ is an isometry into the unit sphere, $\Delta_T G=-nG$
and $|G|=1$. Summing over $i$,
\[
\begin{gathered}
  \sum_i 2v_i\Delta_T w_i = 2g(\mathbf v,\,\Delta_T G) = -2ng(\mathbf v,\,G),
  \\
  \sum_i H_T v_i^2 = n|\mathbf v|^2,
  \\
  \sum_i h^T(\nabla^T w_i,\nabla^T w_i) = \operatorname{tr}h^T = n,
\end{gathered}
\]
so the three interior terms combine into a square,
\[
  -2ng(\mathbf v,\,G)+n|\mathbf v|^2+n = n\bigl(|\mathbf v|^2-2g(\mathbf v,\,G)+|G|^2\bigr) = n|\mathbf v-G|^2,
\]
and $T$ contributes
$\sum_i\int_T\mathcal{I}_i^T = \int_T n|\mathbf v-G|^2\,dA_T
-\int_\Gamma g(\mathbf v,\,G_*\bar\nu)\,ds$.

Adding both contributions, \eqref{eq:Reilly-free-sphere} becomes
\begin{equation}\label{eq:full-free-sphere}
  -\sum_i\int_\Omega|\bar\nabla^2 F_i|^2
  = \int_T n|\mathbf v-G|^2\,dA_T + \int_\Gamma\mathcal{R}\,ds,
  \qquad
  \mathcal{R} = -g(\mathbf u,\,\psi'_*\mu) - g(\mathbf v,\,G_*\bar\nu).
\end{equation}

The corner residue vanishes. Indeed, the relations~\eqref{eq:fb} give
$\mathbf v=F_*N=F_*\mu=\psi'_*\mu$ and $\mathbf u=F_*\nu=F_*\bar\nu=G_*\bar\nu$, so
$\mathcal{R}=-2g(\mathbf u,\,\mathbf v)$. Now $\mathbf u=G_*\bar\nu$ is tangent to $\Sph^n$, while
$\mathbf v=\psi'_*\mu$ is the outward conormal of $\psi'(\Gamma)$ in
$\psi'(\Sigma)$, which by the free boundary condition is normal to $\Sph^n$.
Hence $g(\mathbf u,\,\mathbf v)=0$ and $\mathcal{R}=0$.

Therefore the right hand side of~\eqref{eq:full-free-sphere} equals
$\int_T n|\mathbf v-G|^2\geq 0$, while its left hand side is $\leq 0$. Hence both
vanish, so $\bar\nabla^2 F_i=0$ for every $i$ and $\mathbf v=G$ on $T$. From
$\bar\nabla^2 F_i=0$ we get $F(x)=Bx+b$ for a linear map
$B\colon\R^{n+1}\to\R^{n+1}$ and $b\in\R^{n+1}$. On $T$ the outward unit normal is
the position vector, $N=x$, so $\mathbf v=F_*N=Bx$, while $G=F|_T=Bx+b$. The identity
$\mathbf v=G$ then forces $b=0$, hence $F=B$ is linear.

It remains to check that $B$ is orthogonal. At a point $p$ in the interior
of $T$, the restriction of $B$ to $T_pT$ is an isometry because $B|_T=G$.
Moreover, $BN=G(p)$ is a unit vector orthogonal to $G_*(T_pT)$. Thus $B$
sends an orthonormal basis of $T_pT\oplus\operatorname{span}\{N\}$ to an
orthonormal basis of $\R^{n+1}$. Consequently $\psi'=B\circ\psi$, and the two
immersions are congruent.
\end{proof}

The same argument gives the following congruence theorem under a mean
curvature comparison, without assuming minimality. Here the contact
angle satisfies $0<\theta<\pi/2$, so Lemma~\ref{lem:capillary-boundary}(i) supplies the
boundary curvature comparison needed to construct $G$. The proofs of the two theorems share a substantial part. We present them separately so that the case distinction does not interrupt the arguments.

\begin{theoremC}\label{thm:capillary-sphere}
Let $\psi\colon \Sigma^n\to\R^{n+1}$ be a compact embedded hypersurface with
$\psi(\Sigma)\subset\overline{\mathbb{B}}^{\,n+1}$ and mean curvature $H_\Sigma\geq 0$,
$\theta$-capillary in the unit sphere $\Sph^n$ with
$\theta\in(0,\tfrac{\pi}{2})$, and set $\Gamma:=\partial\Sigma$. Assume that
each connected component of $\Gamma$ bounds a mean convex domain of $\Sph^n$
contained in an open hemisphere, and that these domains are pairwise disjoint.
We denote their union by $T\subset\Sph^n$, so that $\partial T=\Gamma$. If
$\psi'\colon \Sigma^n\to\R^{n+1}$ is an isometric immersion with mean curvature
vector $\vec H'$ satisfying $|\vec H'|\leq H_\Sigma$, $\theta$-capillary in $\Sph^n$
with the same angle $\theta$, then $\psi'$ differs from $\psi$ by a rigid
motion.
\end{theoremC}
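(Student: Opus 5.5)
The plan is to run the argument of Theorem~\ref{thm:free-boundary-sphere}, replacing minimality by the mean curvature comparison, which is handled as in Theorem~\ref{thm:CongruenceTotGeo}. First I construct the comparison map on $T$. Since $0<\theta<\pi/2$, Lemma~\ref{lem:capillary-boundary}(i) gives $H_\Gamma^T=H_\Gamma^{T\,\prime}$ pointwise, so in particular $H_\Gamma^{T\,\prime}\geq 0$. I then apply Theorem~A of \cite{FreitasGuimaraesCongruence} to each connected component of $\Gamma$, exactly as in the proof of Theorem~\ref{thm:free-boundary-sphere}. This produces $G\colon T\to\Sph^n$, equal on each component of $T$ to the restriction of an isometry of $\Sph^n$, with $G|_\Gamma=\psi'\circ\psi^{-1}|_\Gamma$. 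Next, for $i=1,\ldots,n+1$, let $F_i$ be the harmonic function in $\Omega$ with $F_i=\psi'_i\circ\psi^{-1}$ on $\Sigma$ and $F_i=G_i$ on $T$. I take existence and corner regularity from the remark cited there.

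Next I sum Reilly's formula over $i$, using \eqref{eq:reilly-corner-prelim} on both $\Sigma$ and $T$. Write $\mathbf u=F_*\nu$, $\mathbf v=F_*N$ and $\mathbf w=F_*\bar\nu=G_*\bar\nu$. The $\Sigma$ interior terms are bounded below exactly as in Theorem~\ref{thm:CongruenceTotGeo}:
\[
2g(\mathbf u,\vec H')+H_\Sigma|\mathbf u|^2+H_\Sigma\geq H_\Sigma(|\mathbf u|-1)^2 .
\]
The $T$ interior terms combine exactly as in Theorem~\ref{thm:free-boundary-sphere} into $n|\mathbf v-G|^2$. This yields
\[
-\sum_i\int_\Omega|\bar\nabla^2F_i|^2\geq\int_\Sigma H_\Sigma(|\mathbf u|-1)^2+\int_T n|\mathbf v-G|^2+\int_\Gamma\mathcal R,
\qquad
\mathcal R=-g(\mathbf u,\psi'_*\mu)-g(\mathbf v,\mathbf w).
\]

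The key step is to show $\mathcal R=0$, mimicking the change of basis \eqref{eq:u-decomp}--\eqref{eq:dpsi-decomp}. By \eqref{eq:capillarity},
\[
\mathbf u=-\cos\theta\,\mathbf v+\sin\theta\,\mathbf w,
\qquad
\psi'_*\mu=\sin\theta\,\mathbf v+\cos\theta\,\mathbf w .
\]
On the other hand, $\psi'$ is $\theta$-capillary in $\Sph^n$, whose outward normal at $\psi'(x)$ is $G(x)$. Hence
\[
\psi'_*\mu=\sin\theta\,G+\cos\theta\,\bar\nu',
\]
where $\bar\nu'$ is the outward conormal of $\psi'(\Gamma)$ in the sphere. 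The point to verify is that $\mathbf w=G_*\bar\nu=\bar\nu'$. Both vectors are unit vectors tangent to $\Sph^n$ and normal to $\psi'(\Gamma)$, so the only issue is the sign. I expect it to follow from the fact that $G$ carries the mean convex side of $\Gamma$ onto the mean convex side of $\psi'(\Gamma)$, since $H_\Gamma^T=H_\Gamma^{T\,\prime}$ with consistent orientations. Where the boundary curvature vanishes I would instead argue by continuity, or by using the orientation furnished by the cited theorem. Granting this, comparing the two expressions for $\psi'_*\mu$ gives $\mathbf v=G$ on $\Gamma$. Since $g(G,\mathbf w)=0$, we get
\[
\mathcal R=-g(-\cos\theta\,G+\sin\theta\,\mathbf w,\ \sin\theta\,G+\cos\theta\,\mathbf w)=0 .
\]

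With $\mathcal R=0$, the left-hand side is $\leq0$ and the right-hand side is $\geq0$, so both vanish. Therefore $\bar\nabla^2F_i\equiv0$, so $F(x)=Bx+b$, and $\mathbf v=G$ on $T$. As in Theorem~\ref{thm:free-boundary-sphere}, $N=x$ on $T$ gives $Bx=Bx+b$, hence $b=0$. At an interior point of $T$, the map $B$ is isometric on $T_pT$ because $B|_T=G$, and $Bp=G(p)$ is a unit vector orthogonal to $G_*(T_pT)$; hence $B$ is orthogonal. Then $\psi'=B\circ\psi$. The main obstacle is the orientation identification $G_*\bar\nu=\bar\nu'$ in the corner term, together with checking that the hypotheses of Theorem~A of \cite{FreitasGuimaraesCongruence} hold for $\psi'(\Gamma)$. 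The corner regularity of $F$ is needed for Reilly's formula, but it is taken from the cited remark.
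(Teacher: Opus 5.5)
Your proposal follows the paper's proof essentially step for step. It uses the same map $G$ obtained from Lemma~\ref{lem:capillary-boundary}(i) and Theorem~A of \cite{FreitasGuimaraesCongruence}, the same Dirichlet extension, the same squares $H_\Sigma(|\mathbf u|-1)^2$ and $n|\mathbf v-G|^2$, and the same endgame ($b=0$, $B$ orthogonal). Your corner computation, deriving first $\mathbf v=G$ on $\Gamma$ and then $\mathcal R=0$, is equivalent to the paper's, which inverts \eqref{eq:capillarity} and checks $g(\psi'_*\mu,G_*\bar\nu)=\cos\theta$.

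The one step you leave as ``granting this'' is the sign in $\bar\nu'=\pm G_*\bar\nu$. This is exactly what the paper has to argue, and your mean convexity heuristic points the right way, but it is not closed as written. In particular, ``argue by continuity'' only helps if each component of $\Gamma$ has a point where $H_\Gamma^T\neq0$, and you never establish this. The paper closes the step as follows.
\begin{itemize}
\item $\bar\nu'$, defined by $\psi'_*\mu=\sin\theta\,G+\cos\theta\,\bar\nu'$, is precisely the conormal used for $\psi'$ in Lemma~\ref{lem:capillary-boundary}(i). So the mean curvature of $\psi'(\Gamma)$ in $\Sph^n$ with respect to $\bar\nu'$ is $H_\Gamma^{T\,\prime}=H_\Gamma^T$ pointwise.
\item $G$ is an ambient isometry on each component with $G|_\Gamma=\psi'\circ\psi^{-1}$. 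So the mean curvature of $\psi'(\Gamma)$ with respect to $G_*\bar\nu$ is also $H_\Gamma^T$.
\item The sign is constant on each connected component. The minus sign would give $H_\Gamma^T=-H_\Gamma^T$, so that component would be a closed minimal hypersurface of $\Sph^n$.
\end{itemize}
This last point is where the open-hemisphere hypothesis enters. If $g(x,a)>0$ on such a component, integrating $\Delta_\Gamma g(x,a)=-(n-1)g(x,a)$ gives a contradiction. With this, $\bar\nu'=G_*\bar\nu$, and the rest of your argument goes through unchanged.
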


\begin{proof}
Let $\Omega$ be the domain bounded by $\Sigma\cup T$, with the
orientations of Section~\ref{sec:preliminaries}. Thus $N=x$ on $T$, and
we use the capillary relations~\eqref{eq:capillarity}.
By Lemma~\ref{lem:capillary-boundary}(i),
$H_\Gamma^T=H_\Gamma^{T\,\prime}$. Since $H_\Gamma^T\geq0$, Theorem~A in
\cite{FreitasGuimaraesCongruence}, applied to each connected component
of $\Gamma$, gives a map $G\colon T\to\Sph^n$. On each connected component
of $T$, this map is the restriction of an ambient isometry of $\Sph^n$,
and
\[
G|_\Gamma=\psi'\circ\psi^{-1}|_\Gamma.
\]

As in Theorem~\ref{thm:free-boundary-sphere}, for $i=1,\ldots,n+1$ let
$F_i$ be the solution of
\[
  \begin{cases}
    \bar\Delta F_i=0 & \text{in }\Omega,\\
    F_i=\psi'_i\circ\psi^{-1} & \text{on }\Sigma,\\
    F_i=G_i & \text{on }T.
  \end{cases}
\]
The data agree on $\Gamma$, so
Remark~\ref{rem:congruence-dirichlet-regularity} gives existence,
uniqueness and the regularity needed below. We again prove that
$F=(F_1,\ldots,F_{n+1})$ is a rigid motion.

Reilly's formula~\cite{Reilly77} gives
\begin{equation}\label{eq:Reilly-capillary-sphere}
  -\int_\Omega|\bar\nabla^2 F_i|^2\,d\Omega
  = \int_{\Sigma}\mathcal{I}_i^{\Sigma}\,dA + \int_T\mathcal{I}_i^T\,dA_T,
\end{equation}
where $\mathcal I_i^\Sigma$ and $\mathcal I_i^T$ are given by
\eqref{eq:Reilly-Sigma} and \eqref{eq:Reilly-T}, with
\[
z_i=F_i|_\Sigma,\quad u_i=\partial_\nu F_i,\qquad
w_i=F_i|_T,\quad v_i=\partial_N F_i.
\]
We apply \eqref{eq:reilly-corner-prelim} on $\Sigma$ and its analogue on
$T$, as in the preceding proof.

On ${\Sigma}$ put $z_i=\psi'_i\circ\psi^{-1}$, $u_i=\partial_\nu F_i$ and
$\mathbf u:=F_*\nu$. Using $\Delta_\Sigma\psi'=\vec H'$ and the isometry of $\psi'$, we obtain
\[
  \sum_i 2u_i\Delta_\Sigma z_i=2g(\mathbf u,\,\vec H'),\qquad
  \sum_i H_\Sigma u_i^2=H_\Sigma|\mathbf u|^2,\qquad
  \sum_i h^\Sigma(\nabla^\Sigma z_i,\nabla^\Sigma z_i)=H_\Sigma,
\]
and, using $|\vec H'|\leq H_\Sigma$, we complete the square,
\[
\begin{aligned}
  2g(\mathbf u,\vec H')+H_\Sigma|\mathbf u|^2+H_\Sigma
  &\geq -2H_\Sigma|\mathbf u|+H_\Sigma|\mathbf u|^2+H_\Sigma\\
  &=H_\Sigma(|\mathbf u|-1)^2\geq0.
\end{aligned}
\]
Thus the ${\Sigma}$ component gives
\[
  \sum_i\int_{\Sigma}\mathcal{I}_i^{\Sigma}
  \geq \int_{\Sigma} H_\Sigma(|\mathbf u|-1)^2\,dA-\int_\Gamma g(\mathbf u,\,\psi'_*\mu)\,ds.
\]

On $T$ put $w_i=G_i$, $v_i=\partial_N F_i$ and $\mathbf v:=F_*N$. Since
$T\subset\Sph^n$ is totally umbilical we have $h^T=g_T$ and $H_T=n$, and
since $G$ is an isometry into the unit sphere we have $\Delta_T G=-nG$ and
$|G|=1$. Summing over $i$,
\[
  \sum_i 2v_i\Delta_T w_i=-2ng(\mathbf v,\,G),\qquad
  \sum_i H_T v_i^2=n|\mathbf v|^2,\qquad
  \sum_i h^T(\nabla^T w_i,\nabla^T w_i)=n,
\]
which combine into a square,
\[
  -2ng(\mathbf v,\,G)+n|\mathbf v|^2+n=n\bigl(|\mathbf v|^2-2g(\mathbf v,\,G)+|G|^2\bigr)=n|\mathbf v-G|^2,
\]
so the $T$ component gives
\[
  \sum_i\int_T\mathcal{I}_i^T=\int_T n|\mathbf v-G|^2\,dA_T
  -\int_\Gamma g(\mathbf v,\,G_*\bar\nu)\,ds.
\]

Adding the two boundary components, \eqref{eq:Reilly-capillary-sphere} yields
\begin{equation}\label{eq:full-capillary-sphere}
  -\sum_i\int_\Omega|\bar\nabla^2 F_i|^2
  \geq \int_{\Sigma} H_\Sigma(|\mathbf u|-1)^2\,dA + \int_T n|\mathbf v-G|^2\,dA_T
  + \int_\Gamma\mathcal{R}\,ds,
\end{equation}
with corner residue $\mathcal{R}=-g(\mathbf u,\,\psi'_*\mu)-g(\mathbf v,\,G_*\bar\nu)$.

The residue vanishes. The Dirichlet data give
$F_*\mu=\psi'_*\mu$ and $F_*\bar\nu=G_*\bar\nu$.
Since $F$ is $C^1$ up to $\Gamma$, inverting~\eqref{eq:capillarity} gives
$\nu=\tfrac{1}{\sin\theta}(\bar\nu-\cos\theta\,\mu)$ and
$N=\tfrac{1}{\sin\theta}(\mu-\cos\theta\,\bar\nu)$, hence
\[
\begin{aligned}
  \mathbf u=F_*\nu
  &=\tfrac{1}{\sin\theta}(G_*\bar\nu-\cos\theta\,\psi'_*\mu),\\
  \mathbf v=F_*N
  &=\tfrac{1}{\sin\theta}(\psi'_*\mu-\cos\theta\,G_*\bar\nu).
\end{aligned}
\]
Both $\psi'_*\mu$ and $G_*\bar\nu$ are unit vectors. To identify their
relative orientation, we use the capillary condition, which gives the unit
conormal $(\psi'_*\mu-\sin\theta\,G)/\cos\theta$ to $\psi'(\Gamma)$ in $\Sph^n$.
Thus
\[
\frac{\psi'_*\mu-\sin\theta\,G}{\cos\theta}
=\pm G_*\bar\nu
\]
on each connected component. The mean curvature with respect to the
conormal on the left is $H_\Gamma^{T\,\prime}=H_\Gamma^T$ by
Lemma~\ref{lem:capillary-boundary}(i). The mean curvature with respect
to $G_*\bar\nu$ is also $H_\Gamma^T$, since $G$ is an isometry on that
component. The negative sign would force $H_\Gamma^T\equiv0$, which is
impossible for a compact hypersurface contained in an open hemisphere.
Thus $\psi'_*\mu=\sin\theta\,G+\cos\theta\,G_*\bar\nu$ and
$g(\psi'_*\mu,G_*\bar\nu)=\cos\theta$. Therefore
\[
\begin{aligned}
  g(\mathbf u,\,\psi'_*\mu)
  &=\tfrac{1}{\sin\theta}\bigl(g(G_*\bar\nu,\,\psi'_*\mu)
  -\cos\theta\,|\psi'_*\mu|^2\bigr)=0,\\
  g(\mathbf v,\,G_*\bar\nu)
  &=\tfrac{1}{\sin\theta}\bigl(g(\psi'_*\mu,\,G_*\bar\nu)
  -\cos\theta\,|G_*\bar\nu|^2\bigr)=0,
\end{aligned}
\]
so $\mathcal{R}=0$.

Consequently the right hand side of~\eqref{eq:full-capillary-sphere} is nonnegative while the
left hand side is $\leq 0$. Both vanish, and so do the two squares. Thus
$\bar\nabla^2 F_i=0$ in $\Omega$, giving $F(x)=Bx+b$ with $B\colon\R^{n+1}\to\R^{n+1}$
linear and $b\in\R^{n+1}$, and $\mathbf v=G$ on $T$. From here the argument is the same
as in the proof of Theorem~\ref{thm:free-boundary-sphere} and, using these two facts alone,
gives $b=0$ and shows that $B$ is orthogonal. Consequently $\psi'=B\circ\psi$ and the two
immersions are congruent.
\end{proof}

\begin{remark}\label{rem:congruence-dirichlet-regularity}
For the Dirichlet problems in Theorems~\ref{thm:free-boundary-sphere}
and~\ref{thm:capillary-sphere}, the smooth boundary data agree on $\Gamma$.
Since $\Sigma$ and $T$ meet transversely, these data admit a common smooth
extension to $\overline\Omega$. Subtracting it gives homogeneous Dirichlet
data and a smooth source, so existence and uniqueness follow from
Poincar\'e's inequality and the Lax-Milgram theorem. Since $0<\theta\leq\pi/2$, the first transverse
exponent is $\pi/\theta\geq2$, and the standard corner estimates give
$F_i\in C^1(\overline\Omega)\cap W^{2,p}(\Omega)$ for every finite $p>1$
(see~\cite{Grisvard85,Dauge88}).

These solutions have the regularity described in the last paragraph of
Remark~\ref{rem:HK-mixed-regularity}, which justifies the integrations
and traces used above.
\end{remark}

\section{Capillary Heintze--Karcher inequalities in substatic manifolds}\label{sec:substatic-HK}

We use the notation of Section~\ref{sec:preliminaries} and assume that
$V$ is smooth and positive on a neighbourhood of $\overline\Omega$.
For $c\in\R$, consider
\begin{equation}\label{eq:mixed-problem}
\begin{cases}
L_V f=1 & \text{in }\Omega,\\[2mm]
f=0 & \text{on }\Sigma,\\[1mm]
f_N-\dfrac{V_N}{V}f=c & \text{on }T.
\end{cases}
\end{equation}
Writing $f=Vw$, we have
\begin{equation*}
L_V(Vw)=V^{-1}\operatorname{div}(V^2\bar\nabla w),
\qquad
(Vw)_N-\frac{V_N}{V}(Vw)=Vw_N.
\end{equation*}
Hence \eqref{eq:mixed-problem} is equivalent to
\begin{equation}\label{eq:weighted-mixed}
\begin{cases}
\operatorname{div}(V^2\bar\nabla w)=V & \text{in }\Omega,\\
w=0 & \text{on }\Sigma,\\
Vw_N=c & \text{on }T.
\end{cases}
\end{equation}

Problem~\eqref{eq:mixed-problem} has a unique solution with the regularity
needed for Reilly's formula, as explained in
Remark~\ref{rem:HK-mixed-regularity}. This is where we use the restriction
$0<\theta\leq\pi/2$, with $c=0$ when $\theta=\pi/2$.
The geometric approach in \cite{HuWeiXiaZhou2025} avoids this auxiliary
boundary value problem.
\begin{remark}\label{rem:HK-mixed-regularity}
The mixed boundary value problems used in this paper have unique solutions with the
regularity required by the Reilly identities. For \eqref{eq:weighted-mixed},
the weak formulation is
\[
\int_\Omega V^2g(\bar\nabla w,\bar\nabla\phi)\,d\Omega
=-\int_\Omega V\phi\,d\Omega+c\int_TV\phi\,dA_T
\]
for $\phi\in H^1(\Omega)$ vanishing on $\Sigma$.
Since $V$ is bounded away from zero, Poincar\'e's inequality and the
Lax-Milgram theorem give existence and uniqueness. For $0<\theta<\pi/2$,
we lift the boundary data and apply the classical mixed boundary
estimates of \cite{Lieberman86,Lieberman89}, following the argument in
\cite[Appendix~A]{JiaXiaZhang2023}. For $\theta=\pi/2$, we take $c=0$.
The homogeneous Neumann condition for $w$ allows local reflection in
Fermi coordinates, followed by Dirichlet boundary estimates.
The harmonic mixed problem in Theorem~\ref{thm:CongruenceTotGeo} is treated
in the same way after lifting its boundary data. When $\theta=\pi/2$,
the free boundary condition gives the compatibility at $\Gamma$ needed
to reflect across the supporting hyperplane.

In these mixed problems, the solutions are $C^1$ up to the corner, their
Hessians are square integrable, and the second derivatives occurring in
the boundary terms are integrable on each boundary hypersurface.
These estimates justify Reilly's formula, tangential integration by
parts and the first derivative traces at $\Gamma$. Full $C^2$ regularity
at the corner when $\theta=\pi/2$ is not needed.
\end{remark}

To simplify the calculations below, set
\begin{equation*}
A:=\int_\Omega V\,d\Omega,
\qquad
B:=\int_TV\,dA_T,
\qquad
C:=\int_\Gamma V\,ds,
\end{equation*}
and
\begin{equation*}
\mathcal K_\theta
:=\tan\theta\,C-\int_T H_TV\,dA_T.
\end{equation*}
The following is the main result of this section.

\begin{theoremD}\label{thm:substatic-HK}
Let $(\overline M^{n+1},g,V)$ be substatic on $\Omega$, with $V>0$.  Let
$\partial\Omega=\Sigma\cup T$ be as above, and assume the following.
\begin{enumerate}[label=\rm(\roman*)]
\item $\Sigma$ is compact, embedded and $H_\Sigma>0$.
\item $\Sigma$ meets $T$ at a constant angle $\theta\in(0,\pi/2)$.
\item $T$ is $V$-convex in the sense of \eqref{eq:V-boundary-tensor-prelim}.
\end{enumerate}
Then $\mathcal K_\theta>0$ and
\begin{equation}\label{eq:main-HK}
\int_\Sigma\frac{V}{H_\Sigma}\,dA
\ge
\frac{n+1}{n}\int_\Omega V\,d\Omega
+
\frac{\left(\displaystyle\int_TV\,dA_T\right)^2}
{\displaystyle
\tan\theta\int_\Gamma V\,ds-\int_TH_TV\,dA_T}.
\end{equation}
If equality holds, then $\Sigma$ is totally umbilical. 
\end{theoremD}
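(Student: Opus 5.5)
We follow the scheme of \cite{JiaXiaZhang2023}, replacing the classical Reilly formula by the weighted formula of Theorem~\ref{thm:reilly-weighted}. Let $f$ solve the mixed problem \eqref{eq:mixed-problem} with a constant $c$ to be fixed later; Remark~\ref{rem:HK-mixed-regularity} justifies applying \eqref{eq:reilly-weighted-prelim} to it.

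\textbf{Interior and boundary terms.} In the interior, $L_Vf=1$ and $|\mathcal A_V[f]|^2\ge \frac{1}{n+1}(L_Vf)^2$. Together with $\mathcal Q_V\ge0$, the left side of \eqref{eq:reilly-weighted-prelim} satisfies
\[
\frac{n}{n+1}\int_\Omega V\,d\Omega\ \ge\ \text{(boundary terms)}.
\]
On $\Sigma$ we have $z=0$, $Z=0$ and $\varphi=f_\nu$, so $\Sigma$ contributes only $\int_\Sigma VH_\Sigma f_\nu^2$. On $T$ we have $\varphi=c$, constant, so $\nabla^T\varphi=0$. The term $\int_T V\mathcal B_V^T(Z,Z)\ge0$ by $V$-convexity. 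The remaining terms on $T$ are
\[
c\int_T\bigl(V\Delta_Tz-z\Delta_TV\bigr)+c^2\int_TH_TV .
\]
By Green's formula this equals $c\int_\Gamma\bigl(Vz_{\bar\nu}-zV_{\bar\nu}\bigr)\,ds$ plus the $H_T$ term, and since $z=0$ on $\Gamma$ it reduces to $c\int_\Gamma Vf_{\bar\nu}\,ds$. At the corner, $f\equiv0$ on $\Sigma$ gives $f_\mu=0$. Using \eqref{eq:capillarity} together with $f_N=c$ on $\Gamma$ (because $f=0$ there), we get
\[
f_N=\cos\theta\, f_\nu'... \quad\text{more precisely}\quad \bar\nabla f=f_\nu\nu,\qquad f_N=-\cos\theta f_\nu=c,\qquad f_{\bar\nu}=\sin\theta f_\nu=-c\tan\theta .
\]
So the corner term is $-c^2\tan\theta\, C$. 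This gives
\[
\frac{n}{n+1}A\ \ge\ \int_\Sigma VH_\Sigma f_\nu^2-c^2\mathcal K_\theta .
\]

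\textbf{Linking to $\int_\Sigma V/H_\Sigma$.} Integrate the divergence form \eqref{eq:weighted-mixed} with $w=f/V$. This gives
\[
A=\int_\Sigma Vf_\nu\,dA+cB .
\]
Cauchy--Schwarz yields
\[
\Bigl(\int_\Sigma Vf_\nu\Bigr)^2\le \int_\Sigma \frac{V}{H_\Sigma}\int_\Sigma VH_\Sigma f_\nu^2 .
\]
Combining the two displays,
\[
\int_\Sigma\frac V{H_\Sigma}\ \ge\ \frac{(A-cB)^2}{\frac{n}{n+1}A+c^2\mathcal K_\theta}\qquad\text{for every }c.
\]
Once $\mathcal K_\theta>0$ is known, we optimize in $c$. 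The choice $c=-\frac{n}{n+1}A B/\bigl(A\mathcal K_\theta\bigr)$, i.e.\ $c=-\frac{nB}{(n+1)\mathcal K_\theta}$, gives
\[
\int_\Sigma\frac V{H_\Sigma}\ \ge\ \frac{n+1}{n}A+\frac{B^2}{\mathcal K_\theta}.
\]
This is exactly \eqref{eq:main-HK}, which one can check by a Lagrange-type computation: minimizing the denominator-adjusted quotient is equivalent to the Cauchy--Schwarz inequality
\[
\frac{(A-cB)^2}{\alpha+c^2K}\le \frac{A^2}{\alpha}+\frac{B^2}{K}.
\]
That inequality actually goes the wrong direction for a lower bound, so the choice of $c$ must instead attain equality in it. Equality holds exactly when $c/K$ and $-1/\alpha$ are proportional to $B$ and $A$, which gives the $c$ above.

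\textbf{Positivity of $\mathcal K_\theta$ and rigidity.} The main obstacle is showing $\mathcal K_\theta>0$. I would first try the same inequality with a choice forcing this. Taking $c\neq0$ with $c$ chosen so that $A-cB=0$ in the identity of the previous step gives $\int_\Sigma Vf_\nu=0$. Then
\[
0\le\int_\Sigma VH_\Sigma f_\nu^2\le \tfrac{n}{n+1}A+c^2\mathcal K_\theta .
\]
This alone is not enough, so instead I would argue directly with $c=1$. The weak formulation of \eqref{eq:weighted-mixed} tested against $w$ gives
\[
c\,\int_T Vw=\int V^2|\nabla w|^2+\int Vw ,
\]
and sign information from the maximum principle should then yield $\mathcal K_\theta\ge \frac{1}{c^2}\bigl(\int_\Sigma VH_\Sigma f_\nu^2-\frac{n}{n+1}A\bigr)$ together with a strict inequality for the homogeneous problem. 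An alternative route is to test with the solution of $L_Vf=0$, $f=0$ on $\Sigma$, $f_N-\frac{V_N}{V}f=1$ on $T$. There the interior term is $-\int V|\mathcal A|^2-\int\mathcal Q\le0$, and the same boundary computation gives
\[
0\ge \int_\Sigma VH_\Sigma f_\nu^2-\mathcal K_\theta+(\text{nonnegative}).
\]
Hence $\mathcal K_\theta\ge\int_\Sigma VH_\Sigma f_\nu^2>0$, since $f\not\equiv0$ forces $f_\nu\not\equiv0$ by Hopf's lemma; this is the version I would commit to. For equality, all inequalities are equalities, so $\mathcal A_V[f]=\frac{1}{n+1}g$. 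On $\Sigma$, where $f=0$, this gives $\bar\nabla^2f=\frac{1}{n+1}g$ with $\bar\nabla f=f_\nu\nu$, and $f_\nu\neq0$ (via Hopf). Restricting the Hessian to $T\Sigma$ gives $f_\nu h^\Sigma=\frac1{n+1}g_\Sigma$, so $\Sigma$ is umbilical.
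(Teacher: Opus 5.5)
Your argument matches the paper's in everything except the proof that $\mathcal K_\theta>0$. The shared steps are the mixed problem, the evaluation of the weighted Reilly boundary terms (including the corner value $f_{\bar\nu}=-c\tan\theta$), the flux identity $\int_\Sigma Vf_\nu=A-cB$, Cauchy--Schwarz, the optimal $c_*=-\frac{n}{n+1}\frac{B}{\mathcal K_\theta}$, and the equality argument.

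For positivity, the paper uses the fact that $(A-cB)^2\le\bigl(\int_\Sigma V/H_\Sigma\bigr)\bigl(\frac{n}{n+1}A+c^2\mathcal K_\theta\bigr)$ holds for \emph{every} $c$. Since $B>0$, letting $|c|\to\infty$ excludes $\mathcal K_\theta<0$, because the right side eventually becomes negative. It also excludes $\mathcal K_\theta=0$, because the right side stays bounded while the left side blows up. So the inequality you already had gives positivity directly, and your abandoned attempts with $A=cB$ and with $c=1$ can be dropped.

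The route you commit to, via the homogeneous problem $L_Vf=0$ with Robin datum $1$, is also correct. Writing $f_c$ for the solution of the original problem with constant $c$, linearity shows your auxiliary solution is $\lim_{c\to\infty}f_c/c$. So your bound $\mathcal K_\theta\ge\int_\Sigma VH_\Sigma f_\nu^2$ is the leading-order part, after dividing by $c^2$, of the paper's estimate $\int_\Sigma VH_\Sigma f_\nu^2\le\frac{n}{n+1}A+c^2\mathcal K_\theta$. The difference is that strictness comes from $f_\nu\not\equiv0$ rather than from $B>0$. Your route costs one more mixed problem, which is covered by the same existence and regularity remark. In exchange it gives a quantitative lower bound for $\mathcal K_\theta$: you may keep the nonnegative terms $\int_T V\mathcal B_V^T(Z,Z)$, $\int_\Omega V|\mathcal A_V[f]|^2$ and $\int_\Omega\mathcal Q_V(W_V[f],W_V[f])$ on the right. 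It also never requires $1/H_\Sigma$ to be integrable.

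One small repair is needed. Hopf's lemma does not apply to $f$ as you state it. $L_V$ has the zeroth-order coefficient $-\bar\Delta V/V$ of indefinite sign, and you have not shown that $f$ has a sign near $\Sigma$. You do not need it, though. Your own corner computation with $c=1$ gives $f_\nu=-1/\cos\theta$ on $\Gamma$. Since $f\in C^1(\overline\Omega)$, $f_\nu\neq0$ on a collar of $\Gamma$ in $\Sigma$, and hence $\int_\Sigma VH_\Sigma f_\nu^2>0$. Similarly, in the equality case the identity $f_\nu h^\Sigma=\frac{1}{n+1}g_\Sigma$ already forces $f_\nu\neq0$, with no appeal to Hopf.
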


\begin{proof}
Fix $c\in\R$ and let $f$ be the solution of \eqref{eq:mixed-problem}.  Since
$\operatorname{tr}\mathcal A_V[f]=L_Vf=1$, the matrix Cauchy-Schwarz inequality gives
\begin{equation}\label{eq:trace-CS}
|\mathcal A_V[f]|^2\ge\frac1{n+1}.
\end{equation}
Therefore the left hand side of \eqref{eq:reilly-weighted-prelim} satisfies
\begin{equation*}
\int_\Omega V\big(1-|\mathcal A_V[f]|^2\big)\,d\Omega
\le \frac{n}{n+1}A.
\end{equation*}
Since $\mathcal Q_V\ge0$, the sum of the boundary terms is bounded above by the same quantity.

On $\Sigma$ we have $f=0$, hence $z=0$, $Z=0$ and $\varphi=f_\nu$.  The only surviving term is
\begin{equation*}
\int_\Sigma VH_\Sigma f_\nu^2\,dA.
\end{equation*}

On $T$ we have
\[
\varphi=f_N-\frac{V_N}{V}f=c,
\]
so $\nabla^T\varphi=0$. The $V$-convexity assumption ensures that the
last term in \eqref{eq:reilly-weighted-prelim} is nonnegative.
To evaluate the first boundary term, note that $f=0$ on $\Sigma$ gives
$\bar\nabla f=f_\nu\nu$ along $\Gamma$. Since $f=0$ on $\Gamma$,
the Robin condition and \eqref{eq:capillarity} give
\[
c=f_N=-\cos\theta\,f_\nu,\qquad
f_\nu=-\frac{c}{\cos\theta},\qquad
f_{\bar\nu}=\sin\theta\,f_\nu=-c\tan\theta.
\]
Green's formula on $T$ now yields
\begin{align*}
\int_TVc\left(\Delta_Tf-\frac{\Delta_TV}{V}f\right)dA_T
&=c\int_\Gamma\left(Vf_{\bar\nu}-fV_{\bar\nu}\right)ds \\
&=c\int_\Gamma V f_{\bar\nu}\,ds
=-c^2\tan\theta\,C,
\end{align*}
where we used $f=0$ on $\Gamma$. The mean-curvature term on $T$ is
\[
c^2\int_TH_TV\,dA_T.
\]
Consequently,
\begin{equation}\label{eq:key-Reilly-estimate}
\int_\Sigma VH_\Sigma f_\nu^2\,dA
\le
\frac{n}{n+1}A+c^2\mathcal K_\theta.
\end{equation}

Next, Green's identity for $V$ and $f$ gives
\begin{align*}
A
&=\int_\Omega V L_Vf\,d\Omega 
=\int_{\partial\Omega}(Vf_\eta-fV_\eta)\,dA \\
&=\int_\Sigma Vf_\nu\,dA
+\int_TV\left(f_N-\frac{V_N}{V}f\right)dA_T
=\int_\Sigma Vf_\nu\,dA+cB.
\end{align*}
Thus
\begin{equation*}
\int_\Sigma Vf_\nu\,dA=A-cB.
\end{equation*}
By Cauchy-Schwarz,
\begin{align}
(A-cB)^2
&\le
\left(\int_\Sigma\frac{V}{H_\Sigma}\,dA\right)
\left(\int_\Sigma VH_\Sigma f_\nu^2\,dA\right)\notag\\
&\le
\left(\int_\Sigma\frac{V}{H_\Sigma}\,dA\right)
\left(\frac{n}{n+1}A+c^2\mathcal K_\theta\right).
\label{eq:CS-final}
\end{align}
Since \eqref{eq:CS-final} holds for every $c\in\R$ and $B>0$ (because $V>0$ on $\overline\Omega$ and \(T\) has positive measure), it also shows that
\begin{equation*}
\mathcal K_\theta>0.
\end{equation*}
Indeed, if $\mathcal K_\theta<0$, the last factor on the right of \eqref{eq:CS-final} becomes negative for $|c|$ large. If $\mathcal K_\theta=0$, the right hand side stays bounded while $(A-cB)^2\to\infty$.

Dividing by the last factor in \eqref{eq:CS-final} gives
\begin{equation*}
\int_\Sigma\frac{V}{H_\Sigma}\,dA
\ge
\frac{(A-cB)^2}{\dfrac{n}{n+1}A+c^2\mathcal K_\theta}.
\end{equation*}
The right hand side attains its maximum at
\begin{equation*}
c_*=-\frac{n}{n+1}\frac{B}{\mathcal K_\theta}.
\end{equation*}
Substitution gives
\begin{equation*}
\int_\Sigma\frac{V}{H_\Sigma}\,dA
\ge
\frac{n+1}{n}A+\frac{B^2}{\mathcal K_\theta},
\end{equation*}
which is \eqref{eq:main-HK}.

Suppose now that equality holds.  Then equality must hold in \eqref{eq:trace-CS}. Since
$\operatorname{tr}\mathcal A_V[f]=1$, this gives \begin{equation}\label{eq:equality-tensor}
\mathcal A_V[f]=\frac1{n+1}g.
\end{equation}  On $\Sigma$, $f=0$ and all tangential derivatives of $f$ vanish.  For $X,Y\in T\Sigma$,
\[
\bar\nabla^2f(X,Y)=f_\nu h^\Sigma(X,Y).
\]
Restricting \eqref{eq:equality-tensor} to $T\Sigma$ yields
\begin{equation*}
f_\nu h^\Sigma=\frac1{n+1}g_\Sigma.
\end{equation*}
Thus $f_\nu\neq0$ and $\Sigma$ is totally umbilical. 
\end{proof}

\begin{remark}\label{rem:HK-equality}
Equality in Theorem~\ref{thm:substatic-HK} also forces the nonnegative error terms
in the weighted Reilly formula to vanish, so
\[
\int_\Omega\mathcal Q_V\bigl(W_V[f],W_V[f]\bigr)\,d\Omega=0,
\qquad
\int_T V\mathcal B_V^T(Z,Z)\,dA_T=0,
\]
where $Z=\nabla^T f-(f/V)\nabla^T V$.
The same conclusions hold in the free boundary case with $c=0$.
\end{remark}

The inequality also holds in the free boundary case. When the ambient
space admits a reflection across the support that preserves $g$ and $V$,
it also follows from the corresponding Heintze--Karcher inequality by doubling.

\begin{corollary}\label{cor:free-boundary}
Under the hypotheses of Theorem~\ref{thm:substatic-HK}, with the contact
angle replaced by $\theta=\pi/2$, we have
\begin{equation}\label{eq:free-HK}
\int_\Sigma\frac{V}{H_\Sigma}\,dA
\ge
\frac{n+1}{n}\int_\Omega V\,d\Omega.
\end{equation}
If equality holds, then $\Sigma$ is totally umbilical.
\end{corollary}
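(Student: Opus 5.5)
We repeat the proof of Theorem~\ref{thm:substatic-HK} with $\theta=\pi/2$ and the constant fixed at $c=0$. Thus $f$ solves
\[
L_Vf=1 \text{ in } \Omega,\qquad f=0 \text{ on } \Sigma,\qquad f_N-\frac{V_N}{V}f=0 \text{ on } T.
\]
By Remark~\ref{rem:HK-mixed-regularity}, the homogeneous Robin condition (homogeneous Neumann for $w=f/V$) permits reflection across $T$ in Fermi coordinates. This gives a unique solution that is $C^1$ up to $\Gamma$, has square integrable Hessian, and has integrable boundary traces. That is exactly the regularity needed to apply the weighted Reilly formula, Theorem~\ref{thm:reilly-weighted}.

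In the left hand side of \eqref{eq:reilly-weighted-prelim}, the trace Cauchy--Schwarz inequality \eqref{eq:trace-CS} gives the upper bound $\frac{n}{n+1}A$. On the right hand side we discard the $\mathcal Q_V$ term, which is nonnegative by the substatic assumption. On $\Sigma$ we have $z=0$ and $Z=0$, so only $\int_\Sigma VH_\Sigma f_\nu^2\,dA$ survives. On $T$ we have $\varphi=c=0$, so the first three terms vanish identically, and the term $\int_T V\mathcal B_V^T(Z,Z)\,dA_T$ is nonnegative by $V$-convexity. Here we do not need the corner computation at all, nor the identity $f_\nu=-c/\cos\theta$, which would break down at $\theta=\pi/2$. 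We simply obtain
\[
\int_\Sigma VH_\Sigma f_\nu^2\,dA\le\frac{n}{n+1}A.
\]

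Green's identity, exactly as before, gives $\int_\Sigma Vf_\nu\,dA=A$ because the $T$ contribution is $cB=0$. Cauchy--Schwarz, with $H_\Sigma>0$, then yields
\[
A^2\le\int_\Sigma\frac{V}{H_\Sigma}\,dA\cdot\frac{n}{n+1}A .
\]
Dividing by $A>0$ gives \eqref{eq:free-HK}. If equality holds, every inequality in the chain is an equality, in particular \eqref{eq:trace-CS}. Hence $\mathcal A_V[f]=\frac1{n+1}g$, and restricting to $T\Sigma$ where $f=0$ gives $f_\nu h^\Sigma=\frac1{n+1}g_\Sigma$. So $\Sigma$ is totally umbilical.

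The only genuine obstacle is regularity at the corner $\Gamma$ when $\theta=\pi/2$, where the Lieberman mixed estimates are borderline. The weighted Reilly formula and the tangential integrations by parts need $f\in C^1(\overline\Omega)$ with $\bar\nabla^2f\in L^2$. I would handle this, as indicated in Remark~\ref{rem:HK-mixed-regularity}, by reflecting $w$ evenly across $T$ near $\Gamma$, using Fermi coordinates adapted to $S$. The orthogonal contact makes the reflected $\Sigma$ a $C^{1,1}$-type hypersurface, so standard Dirichlet estimates apply. Since the boundary terms on $T$ vanish identically, no second-derivative traces at $\Gamma$ are needed beyond these.
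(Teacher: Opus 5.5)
Your proposal is correct and takes essentially the paper's approach. You take $c=0$ in \eqref{eq:mixed-problem}, so the first three $T$-terms of the weighted Reilly formula vanish and the $\mathcal B_V^T$ term is dropped by $V$-convexity; then you combine $\int_\Sigma Vf_\nu\,dA=A$ with Cauchy--Schwarz, and you handle the equality case exactly as in Theorem~\ref{thm:substatic-HK}, while your corner-regularity remark via even reflection of $w$ is consistent with Remark~\ref{rem:HK-mixed-regularity}.
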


\begin{proof}
Take $c=0$ in \eqref{eq:mixed-problem}.  Then the contribution from $T$ is nonnegative by $V$-convexity, while the argument leading to \eqref{eq:key-Reilly-estimate} gives
\[
\int_\Sigma VH_\Sigma f_\nu^2\,dA\le\frac{n}{n+1}A.
\]
The Green identity gives $\int_\Sigma Vf_\nu=A$, and the same Cauchy-Schwarz argument yields \eqref{eq:free-HK}.  The equality argument is unchanged.
\end{proof}

To obtain an Alexandrov-type theorem, we use a conformal Killing field to derive a
Minkowski identity, as in \cite{WangXia2019,JiaXiaZhang2023}.
A vector field $X$ is conformal Killing with conformal factor $V$ if
\[
\frac12\mathcal L_Xg=Vg,
\]
or equivalently,
$g(\bar\nabla_U X,W)+g(\bar\nabla_W X,U)=2Vg(U,W)$ for all vector
fields $U,W$.
\begin{proposition}\label{prop:general-Minkowski}
Let $X$ be a conformal Killing field on a neighbourhood of $\overline\Omega$
such that $\frac12\mathcal L_Xg=Vg$. Then
\begin{equation}\label{eq:general-Minkowski}
\int_\Sigma\bigl(nV-H_\Sigma g(X,\nu)\bigr)\,dA
=\int_\Gamma g(X,\mu)\,ds,
\end{equation}
and
\begin{equation}\label{eq:general-X-flux}
\int_\Sigma g(X,\nu)\,dA
=(n+1)\int_\Omega V\,d\Omega-\int_Tg(X,N)\,dA_T.
\end{equation}
If $X$ is tangent to $T$, the right hand side of
\eqref{eq:general-Minkowski} is
$\cos\theta\int_\Gamma g(X,\bar\nu)\,ds$.
\end{proposition}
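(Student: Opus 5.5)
We will obtain both identities from the divergence theorem, using two computations with the conformal Killing field $X$ that come straight from $\frac12\mathcal L_Xg=Vg$.

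\emph{Ambient divergence.} Tracing the conformal Killing equation over $\overline M$ gives $\overline{\operatorname{div}}X=(n+1)V$. We apply the divergence theorem on $\Omega$, whose boundary $\partial\Omega=\Sigma\cup T$ has outward normals $\nu$ on $\Sigma$ and $N$ on $T$. The corner $\Gamma$ has codimension two, so it does not affect the integration. This gives
\[
(n+1)\int_\Omega V\,d\Omega=\int_\Sigma g(X,\nu)\,dA+\int_T g(X,N)\,dA_T,
\]
which is \eqref{eq:general-X-flux}.

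\emph{Tangential divergence on $\Sigma$.} Let $X^\top$ be the tangential part of $X$, so that $X=X^\top+g(X,\nu)\nu$. Take an orthonormal frame $\{e_\alpha\}$ of $T\Sigma$. The conformal Killing equation with $U=W=e_\alpha$ gives $g(\bar\nabla_{e_\alpha}X,e_\alpha)=V$. We also have $g(\bar\nabla_{e_\alpha}\nu,e_\alpha)=h^\Sigma(e_\alpha,e_\alpha)$, from the convention $h^{\mathcal S}(X,Y)=g(\bar\nabla_X\eta,Y)$. Combining these,
\[
\operatorname{div}_\Sigma X^\top=\sum_\alpha g(\bar\nabla_{e_\alpha}X,e_\alpha)-g(X,\nu)\sum_\alpha g(\bar\nabla_{e_\alpha}\nu,e_\alpha)=nV-H_\Sigma g(X,\nu).
\]
The divergence theorem on $\Sigma$, whose boundary $\Gamma$ has outward conormal $\mu$, then yields
\[
\int_\Sigma\bigl(nV-H_\Sigma g(X,\nu)\bigr)\,dA=\int_\Gamma g(X^\top,\mu)\,ds=\int_\Gamma g(X,\mu)\,ds,
\]
because $\mu$ is tangent to $\Sigma$. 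This is \eqref{eq:general-Minkowski}.

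\emph{Case $X$ tangent to $T$.} If $X$ is tangent to $T$, then $g(X,N)=0$ along $\Gamma$. From \eqref{eq:capillarity}, $\mu=\sin\theta\,N+\cos\theta\,\bar\nu$, so along $\Gamma$
\[
g(X,\mu)=\cos\theta\,g(X,\bar\nu),
\]
which gives the last claim.

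The only step needing care is the sign convention for $h^\Sigma$. Because $\nu$ points out of $\Omega$, the sign of the $H_\Sigma g(X,\nu)$ term must be checked against the convention fixed in Section~\ref{sec:preliminaries}; with the computation above it agrees with the stated formula. Smoothness of $\Sigma$ and $T$ up to $\Gamma$ justifies using the divergence theorem on each piece.
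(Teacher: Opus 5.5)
Your proof is correct and follows the same route as the paper's proof. The paper integrates $\operatorname{div}_\Sigma X^\top=nV-H_\Sigma g(X,\nu)$ over $\Sigma$, applies the divergence theorem on $\Omega$ to $\operatorname{div}X=(n+1)V$, and uses \eqref{eq:capillarity} for the tangent case; you have additionally written out the frame computation that the paper states in a single line.
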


\begin{proof}
With our convention for the outward normal, we have
$\operatorname{div}_\Sigma X^\top=nV-H_\Sigma g(X,\nu)$.
Integration on $\Sigma$ proves \eqref{eq:general-Minkowski}.
Since $\operatorname{div}X=(n+1)V$, the divergence theorem on $\Omega$
gives \eqref{eq:general-X-flux}. The last assertion follows from
\eqref{eq:capillarity}.
\end{proof}

This gives the following Alexandrov-type theorem.

\begin{corollary}\label{cor:substatic-Alexandrov}
Assume the hypotheses of Theorem~\ref{thm:substatic-HK} when
$0<\theta<\pi/2$, or those of Corollary~\ref{cor:free-boundary} when
$\theta=\pi/2$. Suppose that $H_\Sigma=h_0>0$ is constant and that a
conformal Killing field $X$ satisfies
\[
\frac12\mathcal L_Xg=Vg,\qquad g(X,N)=0\quad\text{on }T.
\]
If $0<\theta<\pi/2$, assume in addition that there is a Killing field
$Y$ with
\[
\mathcal L_Yg=0,\qquad g(Y,N)=V\quad\text{on }T.
\]
The fields are defined on a neighbourhood of $\overline\Omega$.
Then $\Sigma$ is totally umbilical.
\end{corollary}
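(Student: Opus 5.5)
The plan is to show that equality holds in Theorem~\ref{thm:substatic-HK} (or in Corollary~\ref{cor:free-boundary} when $\theta=\pi/2$) by combining the two integral identities of Proposition~\ref{prop:general-Minkowski}. The equality case then gives umbilicity. Since $H_\Sigma=h_0$ is constant, the left side of the Heintze--Karcher inequality is $h_0^{-1}\int_\Sigma V\,dA$. It therefore suffices to compute $\int_\Sigma V\,dA$ in terms of $A$, $B$ and the corner data, and to check that it produces exactly the right-hand side.

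\emph{Free boundary case.} Apply \eqref{eq:general-Minkowski} with $X$ tangent to $T$ (since $g(X,N)=0$). The right-hand side is $\cos\theta\int_\Gamma g(X,\bar\nu)\,ds$, which vanishes for $\theta=\pi/2$. Hence $n\int_\Sigma V = h_0\int_\Sigma g(X,\nu)$. By \eqref{eq:general-X-flux}, again using $g(X,N)=0$, the integral $\int_\Sigma g(X,\nu)$ equals $(n+1)A$. This gives $\int_\Sigma V/H_\Sigma = \frac{n+1}{n}A$, which is equality in \eqref{eq:free-HK}, and the corollary yields total umbilicity.

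\emph{Capillary case.} Now the corner term $\cos\theta\int_\Gamma g(X,\bar\nu)$ survives. Here I would use the Killing field $Y$. Since $\operatorname{div}Y=0$, the divergence theorem gives $\int_\Sigma g(Y,\nu)=-\int_T g(Y,N)=-B$. Since $Y$ is Killing, $\operatorname{div}_\Sigma Y^\top=-H_\Sigma g(Y,\nu)$, so integrating over $\Sigma$ gives $\int_\Gamma g(Y,\mu) = -h_0\int_\Sigma g(Y,\nu)=h_0B$. On $\Gamma$ we have $g(Y,\mu)=\sin\theta\,V+\cos\theta\, g(Y,\bar\nu)$. Moreover, $Y^{T}$, the part of $Y$ tangent to $T$, has divergence on $T$ computable from the Killing equation as $-H_T g(Y,N)=-H_TV$. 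Integrating this over $T$ yields $\int_\Gamma g(Y,\bar\nu)=-\int_T H_T V$. Combining,
\[
h_0B=\sin\theta\, C-\cos\theta\int_T H_TV=\cos\theta\,\mathcal K_\theta .
\]

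Next I would handle the corner term for $X$. I would apply the same trick to $X$ on $T$: $\operatorname{div}_T X = nV$ (the conformal factor restricted to a hypersurface, using $g(X,N)=0$), so $\int_\Gamma g(X,\bar\nu)=nB$. Then the two identities give $n\int_\Sigma V - h_0\bigl((n+1)A\bigr)=n\cos\theta\, B$, that is,
\[
\int_\Sigma \frac{V}{h_0}=\frac{n+1}{n}A+\frac{\cos\theta\,B}{h_0}.
\]
Using $h_0=\cos\theta\,\mathcal K_\theta/B$, the last term becomes $B^2/\mathcal K_\theta$. This is exactly equality in \eqref{eq:main-HK}, so Theorem~\ref{thm:substatic-HK} gives umbilicity.

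The main obstacle is the bookkeeping of signs and orientations in the tangential divergence computations on $T$. This includes the convention $h^T(X,Y)=g(\bar\nabla_X N,Y)$ with $N$ outward, and the signs of $\bar\nu$ and $\mu$ via \eqref{eq:capillarity}. Everything must be consistent so that $\cos\theta\,\mathcal K_\theta$ appears with the right sign. I would verify these signs against the model case of a spherical cap on a hyperplane in $\R^{n+1}$, with $V=1$, $X=x$ and $Y$ a translation normal to the hyperplane.
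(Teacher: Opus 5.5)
Your proposal is correct and follows the paper's proof essentially step for step. The Minkowski identity and flux formula for $X$, with $\operatorname{div}_T X=nV$, give $\int_\Sigma V/h_0\,dA=\frac{n+1}{n}A+\frac{\cos\theta}{h_0}B$, and the Killing field $Y$ gives $h_0B=\cos\theta\,\mathcal K_\theta$; hence equality holds in \eqref{eq:main-HK} (or in \eqref{eq:free-HK} when $\theta=\pi/2$), and the equality case yields umbilicity. The only cosmetic difference is ordering: the paper computes the corner term $n\cos\theta\,B$ for both cases at once before splitting, whereas you split into the two cases first.
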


\begin{proof}
Since $X$ is tangent to $T$, we have $\operatorname{div}_TX=nV$ and
\[
\int_\Gamma g(X,\mu)\,ds
=\cos\theta\int_\Gamma g(X,\bar\nu)\,ds
=n\cos\theta B.
\]
Proposition~\ref{prop:general-Minkowski} therefore gives
\begin{equation}\label{eq:CMC-Minkowski-tangent}
\int_\Sigma\frac{V}{h_0}\,dA
=\frac{n+1}{n}A+\frac{\cos\theta}{h_0}B.
\end{equation}
For $\theta=\pi/2$, this is equality in \eqref{eq:free-HK}, and
Corollary~\ref{cor:free-boundary} proves the assertion.

Suppose that $0<\theta<\pi/2$. Since $Y$ is Killing and $g(Y,N)=V$,
the divergence theorem on $\Omega$ gives
\[
\int_\Sigma g(Y,\nu)\,dA=-B.
\]
On $\Sigma$ and $T$, respectively, we have
\[
\operatorname{div}_\Sigma Y^\top=-h_0g(Y,\nu),
\qquad
\operatorname{div}_T Y^\top=-H_TV.
\]
Using the capillary relations and integrating, we obtain
\begin{align*}
h_0B
&=\int_\Gamma g(Y,\mu)\,ds\\
&=\sin\theta C+\cos\theta\int_\Gamma g(Y,\bar\nu)\,ds\\
&=\sin\theta C-\cos\theta\int_T H_TV\,dA_T
=\cos\theta\,\mathcal K_\theta.
\end{align*}
Substituting into \eqref{eq:CMC-Minkowski-tangent} gives
\[
\int_\Sigma\frac{V}{h_0}\,dA
=\frac{n+1}{n}A+\frac{B^2}{\mathcal K_\theta}.
\]
Thus equality holds in \eqref{eq:main-HK}, and
Theorem~\ref{thm:substatic-HK} proves the assertion.
\end{proof}

As an application, we obtain a Heintze--Karcher inequality for hypersurfaces with constant
mean curvature.

\begin{lemma}\label{lem:transverse-Killing}
Let $0<\theta<\pi/2$. Suppose that $Y$ is a Killing field on a neighbourhood of
$\overline\Omega$ and that $g(Y,N)=V$ on $T$. Then
\begin{equation}\label{eq:transverse-flux}
\int_\Sigma g(Y,\nu)\,dA=-B,
\qquad
\int_\Gamma g(Y,\mu)\,ds=\cos\theta\,\mathcal K_\theta,
\end{equation}
and
\begin{equation}\label{eq:transverse-H-flux}
\int_\Sigma H_\Sigma g(Y,\nu)\,dA
=-\cos\theta\,\mathcal K_\theta.
\end{equation}
\end{lemma}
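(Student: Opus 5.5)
The plan is to repeat, for the Killing field $Y$ alone, the computations already carried out in the proof of Corollary~\ref{cor:substatic-Alexandrov}, but without assuming that $H_\Sigma$ is constant. The three identities come from three divergence computations: one on $\Omega$, one on $T$, and one on $\Sigma$.

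\emph{First identity in \eqref{eq:transverse-flux}.} Since $Y$ is Killing, $\operatorname{div}Y=0$ on $\Omega$. The divergence theorem with outward normals $\nu$ on $\Sigma$ and $N$ on $T$ gives
\[
0=\int_\Sigma g(Y,\nu)\,dA+\int_T g(Y,N)\,dA_T .
\]
Because $g(Y,N)=V$ on $T$, the second integral is $B$, which gives $\int_\Sigma g(Y,\nu)\,dA=-B$.

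\emph{Second identity in \eqref{eq:transverse-flux}.} First, for a Killing field the tangential divergence on a hypersurface with outward normal $\eta$ is $\operatorname{div}_{\mathcal S}Y^\top=-H_{\mathcal S}\,g(Y,\eta)$. Indeed, $\operatorname{div}_{\mathcal S}Y=0$ by antisymmetry of $\bar\nabla Y$. Writing $Y=Y^\top+g(Y,\eta)\eta$ and using the convention $h^{\mathcal S}(X,Z)=g(\bar\nabla_X\eta,Z)$, one gets $\operatorname{div}_{\mathcal S}(g(Y,\eta)\eta)=H_{\mathcal S}g(Y,\eta)$, which yields the formula.

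Next, apply this on $T$, where $g(Y,N)=V$, and integrate. This gives
\[
\int_\Gamma g(Y,\bar\nu)\,ds=-\int_T H_T V\,dA_T .
\]
Along $\Gamma$ decompose $\mu=\sin\theta\,N+\cos\theta\,\bar\nu$ by \eqref{eq:capillarity}, and use $g(Y,N)=V$ on $\Gamma$, which holds by continuity. Then
\[
\int_\Gamma g(Y,\mu)\,ds=\sin\theta\,C-\cos\theta\int_T H_TV\,dA_T=\cos\theta\,\mathcal K_\theta .
\]
The last equality uses $\tan\theta=\sin\theta/\cos\theta$ with $\cos\theta\neq0$, since $\theta<\pi/2$.

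\emph{Identity \eqref{eq:transverse-H-flux}.} Apply the same tangential divergence formula on $\Sigma$ and integrate, with outward conormal $\mu$:
\[
-\int_\Sigma H_\Sigma\, g(Y,\nu)\,dA=\int_\Gamma g(Y,\mu)\,ds=\cos\theta\,\mathcal K_\theta .
\]

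The only step needing care is the sign bookkeeping in the formula $\operatorname{div}_{\mathcal S}Y^\top=-H_{\mathcal S}g(Y,\eta)$ under the paper's curvature convention, together with matching the orientations of $\bar\nu$ and $\mu$ on $\Gamma$. Everything else is routine. The smoothness of $Y$ up to $\Gamma$ justifies restricting $g(Y,N)=V$ to the corner.
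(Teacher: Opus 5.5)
Your proposal is correct and follows the paper's proof essentially step for step. You use $\operatorname{div}Y=0$ on $\Omega$ for the first flux identity. You integrate $\operatorname{div}_{T}Y^\top=-H_T\,g(Y,N)=-H_TV$ over $T$ and combine it with the capillary decomposition $\mu=\sin\theta\,N+\cos\theta\,\bar\nu$ for the corner identity. You integrate $\operatorname{div}_\Sigma Y^\top=-H_\Sigma g(Y,\nu)$ over $\Sigma$ for \eqref{eq:transverse-H-flux}. Beyond that, you only supply the sign verifications that the paper leaves implicit.
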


\begin{proof}
The first identity follows from $\operatorname{div}Y=0$.
Since $Y$ is Killing,
$\operatorname{div}_T Y^\top=-H_Tg(Y,N)=-H_TV$, and hence
\[
\int_\Gamma g(Y,\bar\nu)\,ds=-\int_TH_TV\,dA_T.
\]
The capillary relation gives the second identity in
\eqref{eq:transverse-flux}. Finally,
$\operatorname{div}_\Sigma Y^\top=-H_\Sigma g(Y,\nu)$ gives
\eqref{eq:transverse-H-flux}.
\end{proof}

This gives the following inequality.

\begin{corollary}
\label{cor:static-CMC-HK}
Under the hypotheses of Theorem~\ref{thm:substatic-HK}, suppose that
$(\overline M,g,V)$ is static and that $H_\Sigma=h_0>0$ is constant.
Assume that there is another static potential $U$, not necessarily positive,
such that
\begin{equation*}
\mathcal Q_U=0,
\qquad U_N-\frac{V_N}{V}U=1\quad\hbox{on }T.
\end{equation*}
Then
\begin{equation}\label{eq:static-CMC-HK}
\int_\Sigma
\frac{V+\cos\theta\,(VU_\nu-UV_\nu)}{H_\Sigma}\,dA
\geq\frac{n+1}{n}\int_\Omega V\,d\Omega.
\end{equation}
If equality holds, then $\Sigma$ is totally umbilical.
The free boundary version follows from Corollary~\ref{cor:free-boundary}
without requiring $U$.
\end{corollary}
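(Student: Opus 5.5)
The plan is to reduce the statement to Theorem~\ref{thm:substatic-HK}, with the second static potential $U$ playing the role of the Killing field $Y$ in Lemma~\ref{lem:transverse-Killing}. Set
\[
P:=V\bar\nabla U-U\bar\nabla V .
\]
Since $H_\Sigma=h_0$ is constant, the left hand side of \eqref{eq:static-CMC-HK} equals
\[
\frac1{h_0}\int_\Sigma V\,dA+\frac{\cos\theta}{h_0}\int_\Sigma g(P,\nu)\,dA .
\]
It therefore suffices to establish two identities: $\int_\Sigma g(P,\nu)\,dA=-B$, and $h_0B=\cos\theta\,\mathcal K_\theta$. Granting them, Theorem~\ref{thm:substatic-HK} gives
\[
\int_\Sigma\frac{V}{h_0}\,dA\geq\frac{n+1}{n}A+\frac{B^2}{\mathcal K_\theta}
=\frac{n+1}{n}A+\frac{\cos\theta}{h_0}B,
\]
which is \eqref{eq:static-CMC-HK}. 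Equality in \eqref{eq:static-CMC-HK} is then equality in \eqref{eq:main-HK}, so the umbilicity conclusion is inherited from Theorem~\ref{thm:substatic-HK}.

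\emph{Step 1 (flux identity).} Tracing $\mathcal Q_V=0$ gives $n\bar\Delta V+VR=0$, and likewise $n\bar\Delta U+UR=0$, where $R$ is the scalar curvature. Hence $\operatorname{div}P=V\bar\Delta U-U\bar\Delta V=0$. The divergence theorem on $\Omega$, together with the boundary condition $VU_N-UV_N=V$ on $T$, then yields $\int_\Sigma g(P,\nu)\,dA=-B$.

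\emph{Step 2 (tangential Minkowski identity).} On a hypersurface with outward normal $\eta$, I would use
\[
\Delta_{\mathcal S}U=\bar\Delta U-\bar\nabla^2U(\eta,\eta)-H_{\mathcal S}U_\eta .
\]
Staticity gives $\bar\nabla^2U=(\bar\Delta U)g+U\Ric$, so $\Delta_{\mathcal S}U=-U\Ric(\eta,\eta)-H_{\mathcal S}U_\eta$, and the same holds for $V$. The Ricci terms cancel in the tangential divergence of $P^\top=V\nabla^{\mathcal S}U-U\nabla^{\mathcal S}V$, giving
\[
\operatorname{div}_{\mathcal S}P^\top=-H_{\mathcal S}\,g(P,\eta).
\]
Integrating on $\Sigma$ gives $h_0B=\int_\Gamma g(P,\mu)\,ds$. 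Integrating on $T$ and using $g(P,N)=V$ gives $\int_\Gamma g(P,\bar\nu)\,ds=-\int_TH_TV\,dA_T$. Decomposing $\mu=\sin\theta\,N+\cos\theta\,\bar\nu$ by \eqref{eq:capillarity}, we have $g(P,\mu)=\sin\theta\,V+\cos\theta\,g(P,\bar\nu)$ on $\Gamma$. Therefore
\[
h_0B=\sin\theta\,C-\cos\theta\int_TH_TV\,dA_T=\cos\theta\,\mathcal K_\theta,
\]
exactly as in Lemma~\ref{lem:transverse-Killing}.

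The main point needing care is Step~2. One must check that the Hessian formula from $\mathcal Q_U=0$ is used with the correct sign convention for $h^{\mathcal S}$ and $H_{\mathcal S}$, namely outward normal and unnormalized mean curvature. Otherwise the identity $\operatorname{div}_{\mathcal S}P^\top=-H_{\mathcal S}g(P,\eta)$, and hence $h_0B=\cos\theta\,\mathcal K_\theta$, could come out with the wrong sign. Since $U$ is smooth on a neighbourhood of $\overline\Omega$, no regularity issues arise.

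For the free boundary remark, $\cos\theta=0$, so the left hand side of \eqref{eq:static-CMC-HK} is just $\int_\Sigma V/H_\Sigma\,dA$. The claim is then exactly Corollary~\ref{cor:free-boundary}, and no $U$ is needed.
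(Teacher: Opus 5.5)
Your proof is correct and follows the paper's route. The paper also sets $Y=V\bar\nabla U-U\bar\nabla V$, uses the traced static equations to get $\tfrac12\mathcal L_Yg=V\bar\nabla^2U-U\bar\nabla^2V=0$, invokes Lemma~\ref{lem:transverse-Killing} to obtain $\int_\Sigma g(Y,\nu)\,dA=-B$ and $h_0B=\cos\theta\,\mathcal K_\theta$, and substitutes into Theorem~\ref{thm:substatic-HK}; your Steps~1 and~2 derive the same two identities for $P=Y$ directly from the static equations rather than through the Killing property, and your sign conventions are consistent with the paper's.
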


\begin{proof}
Set $Y=V\bar\nabla U-U\bar\nabla V$. Taking the trace of the static
equations gives $n\bar\Delta U+\mathrm{Scal}_gU=0$ and
$n\bar\Delta V+\mathrm{Scal}_gV=0$. Therefore
\[
\tfrac12\mathcal L_Yg
=V\bar\nabla^2U-U\bar\nabla^2V=0,
\qquad g(Y,N)=V.
\]
Lemma~\ref{lem:transverse-Killing} and the constancy of $H_\Sigma$ give
$\cos\theta\,\mathcal K_\theta=h_0B$. Thus the correction in
\eqref{eq:main-HK} is
\[
\frac{B^2}{\mathcal K_\theta}
=\frac{\cos\theta}{h_0}B
=-\int_\Sigma\frac{\cos\theta\,g(Y,\nu)}{H_\Sigma}\,dA.
\]
Moving this term to the left proves \eqref{eq:static-CMC-HK}.
\end{proof}

The form of \eqref{eq:static-CMC-HK} is closer to the classical
Heintze--Karcher inequality and its weighted version in \cite{Brendle2013}.
 The inequality \eqref{eq:static-CMC-HK} in the ambient manifolds that admit these
fields is expected to be true even without assuming a constant mean curvature. This has been established
in the hyperbolic half-space in \cite{HuWeiXiaZhou2025} (even for \(\theta> \frac{\pi}{2}\)).

We now specialize these results to the round sphere,
completing the space form cases of the capillary Heintze--Karcher inequality.

\subsection{The spherical case}\label{sec:spherical-HK}

Fix $O\in\Sph^{n+1}$ and a unit vector $a\in T_O\Sph^{n+1}$.
In geodesic polar coordinates $q=\exp_O(r\omega)$, put
\begin{equation*}
V_0=\cos r,
\qquad V_a=\sin r\,g_O(a,\omega).
\end{equation*}
Both potentials extend smoothly across $O$ and satisfy
$\bar\nabla^2V=-Vg$, so $\mathcal Q_{V_0}=\mathcal Q_{V_a}=0$.
For $R\in(0,\pi/2)$, let $S_R=\{r=R\}$ and
$B^+_{R,a}=\{r<R,\ V_a>0\}$. The corresponding hyperbolic choices
recover the results in \cite[Section~6]{ChenPyo2026}, as explained in
Remark~\ref{rem:hyperbolic-application}. With $N=\partial_r$, we have
\begin{equation*}
h^{S_R}=\cot R\,g_{S_R},\qquad
(V_a)_N=\cot R\,V_a,\qquad \mathcal B_{V_a}^{S_R}=0.
\end{equation*}

Define
\begin{equation*}
Y_a=V_0\bar\nabla V_a-V_a\bar\nabla V_0,\qquad
X_a=\cos R\,Y_a-\bar\nabla V_a,\qquad U_a=-\sin R\,V_0.
\end{equation*}
The static equations give
\begin{equation}\label{eq:spherical-field-identities}
\frac12\mathcal L_{X_a}g=V_ag,\qquad
\mathcal L_{Y_a}g=0,\qquad
V_a\bar\nabla U_a-U_a\bar\nabla V_a=\sin R\,Y_a.
\end{equation}
On $S_R$, these fields and potentials satisfy
\begin{equation}\label{eq:spherical-boundary-fields}
g(X_a,N)=0,\qquad
g(\sin R\,Y_a,N)=V_a,\qquad
(U_a)_N-\frac{(V_a)_N}{V_a}U_a=1.
\end{equation}
Let $\Omega$ be a domain with $\partial\Omega=\Sigma\cup T$,
$T\subset S_R$, $\Sigma\setminus\Gamma\subset B^+_{R,a}$ and
$V_a>0$ on $\overline\Omega$, with the notation and smoothness assumptions
of Section~\ref{sec:preliminaries}. Suppose that $\Sigma$ meets $T$ at
a constant angle $\theta\in(0,\pi/2]$. For $0<\theta<\pi/2$,
Lemma~\ref{lem:transverse-Killing} gives
\begin{equation*}
\begin{split}
D&:=\sin R\int_\Gamma g(Y_a,\mu)\,ds
=\cos\theta\,\mathcal K_\theta,\\
\mathcal K_\theta
&=\tan\theta\int_\Gamma V_a\,ds
-n\cot R\int_T V_a\,dA_T.
\end{split}
\end{equation*}
Applying Theorem~\ref{thm:substatic-HK} and
Corollary~\ref{cor:free-boundary} gives the following.

\begin{corollary}\label{cor:sphere}
Under the preceding assumptions, if $H_\Sigma>0$, then
\begin{equation}\label{eq:spherical-final}
\int_\Sigma\frac{V_a}{H_\Sigma}\,dA
\geq\frac{n+1}{n}\int_\Omega V_a\,d\Omega
+\cos\theta\,
\frac{\left(\int_T V_a\,dA_T\right)^2}
{\sin R\int_\Gamma g(Y_a,\mu)\,ds}.
\end{equation}
For $\theta=\pi/2$, the last term is zero.
If equality holds, then $\Sigma$ is totally umbilical. 
\end{corollary}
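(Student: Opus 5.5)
The plan is to apply Theorem~\ref{thm:substatic-HK} (for $0<\theta<\pi/2$) and Corollary~\ref{cor:free-boundary} (for $\theta=\pi/2$) with the static triple $(\Sph^{n+1},g,V_a)$ on $\Omega$. After checking the hypotheses, I rewrite the correction term $B^2/\mathcal K_\theta$ via the identity $D=\cos\theta\,\mathcal K_\theta$ from Lemma~\ref{lem:transverse-Killing}.

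\emph{Step 1: hypotheses.} Since $\bar\nabla^2V_a=-V_ag$ and $\Ric=n g$, we get $\bar\Delta V_a=-(n+1)V_a$. Hence
\[
\mathcal Q_{V_a}=-(n+1)V_ag+V_ag+nV_ag=0,
\]
so the triple is static, and in particular substatic. Positivity $V_a>0$ on $\overline\Omega$ is assumed. For $V$-convexity of $T\subset S_R$, with $N=\partial_r$ outward, we have $h^{S_R}=\cot R\,g_{S_R}$. Writing $V_a=\sin r\,g_O(a,\omega)$ gives $(V_a)_N=\cos R\,g_O(a,\omega)=\cot R\,V_a$ on $S_R$. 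Therefore $\mathcal B^{T}_{V_a}=0\ge0$. Finally, $H_\Sigma>0$ and the constant angle are assumed.

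\emph{Step 2: the case $0<\theta<\pi/2$.} Theorem~\ref{thm:substatic-HK} gives $\mathcal K_\theta>0$ and
\[
\int_\Sigma\frac{V_a}{H_\Sigma}\ge\frac{n+1}{n}\int_\Omega V_a+\frac{B^2}{\mathcal K_\theta}.
\]
Next apply Lemma~\ref{lem:transverse-Killing} with the Killing field $Y=\sin R\,Y_a$. By \eqref{eq:spherical-field-identities} and \eqref{eq:spherical-boundary-fields}, this field is Killing and satisfies $g(Y,N)=V_a$ on $T$. The lemma yields $D=\sin R\int_\Gamma g(Y_a,\mu)\,ds=\cos\theta\,\mathcal K_\theta$. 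Since $\cos\theta>0$, this gives $D>0$ and $B^2/\mathcal K_\theta=\cos\theta\,B^2/D$, which is exactly \eqref{eq:spherical-final}. The displayed formula $\mathcal K_\theta=\tan\theta\int_\Gamma V_a-n\cot R\int_T V_a$ follows from $H_T=n\cot R$. Equality forces total umbilicity by the equality statement of Theorem~\ref{thm:substatic-HK}.

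\emph{Step 3: the case $\theta=\pi/2$.} The correction term carries the factor $\cos\theta=0$ and is interpreted as zero. Corollary~\ref{cor:free-boundary} then gives the inequality and the umbilicity in case of equality.

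\emph{Main obstacle.} The main care point is verifying the field identities \eqref{eq:spherical-field-identities} and \eqref{eq:spherical-boundary-fields}, in particular $g(\sin R\,Y_a,N)=V_a$ on $S_R$. We have
\[
g(Y_a,\partial_r)=V_0\,\partial_rV_a-V_a\,\partial_rV_0=\cos R\cos R\,g_O(a,\omega)+\sin R\sin R\,g_O(a,\omega)=g_O(a,\omega).
\]
Multiplying by $\sin R$ gives $V_a$, as required. The other potential issue is the regularity at $O$ and the requirement that $T$ has positive measure so that $B>0$; both are guaranteed by the standing assumptions.
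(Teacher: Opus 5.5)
Your proposal is correct and follows the paper's route. You check that $(\Sph^{n+1},g,V_a)$ is static with $\mathcal B^{S_R}_{V_a}=0$, apply Theorem~\ref{thm:substatic-HK} (or Corollary~\ref{cor:free-boundary} when $\theta=\pi/2$), and rewrite $B^2/\mathcal K_\theta$ as $\cos\theta\,B^2/D$ using Lemma~\ref{lem:transverse-Killing} with $Y=\sin R\,Y_a$. One small refinement: for $\theta=\pi/2$ the last term is literally zero rather than merely interpreted as such, since $\mu=N$ gives $D=\int_\Gamma V_a\,ds>0$.
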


Proposition~\ref{prop:general-Minkowski} and the flux identity
$\sin R\int_\Sigma g(Y_a,\nu)\,dA=-\int_T V_a\,dA_T$
give the spherical Minkowski formula
\begin{equation*}
\int_\Sigma
\bigl(nV_a+n\sin R\cos\theta\,g(Y_a,\nu)
-H_\Sigma g(X_a,\nu)\bigr)\,dA=0.
\end{equation*}
When $H_\Sigma$ is constant, Corollary~\ref{cor:static-CMC-HK} applied
with $U=U_a$ also gives
\begin{equation}\label{eq:spherical-CMC-HK}
\int_\Sigma
\frac{V_a+\sin R\cos\theta\,g(Y_a,\nu)}{H_\Sigma}\,dA
\geq\frac{n+1}{n}\int_\Omega V_a\,d\Omega.
\end{equation}
By \eqref{eq:spherical-field-identities} and
\eqref{eq:spherical-boundary-fields},
Corollary~\ref{cor:substatic-Alexandrov} also applies.

\begin{corollary}\label{thm:spherical-Alexandrov}
Under the assumptions of Corollary~\ref{cor:sphere}, suppose that
$n\geq2$, $\Sigma$ is connected and $H_\Sigma$ is constant.
Then $\Sigma$ is totally umbilical and is contained in a geodesic sphere.
\end{corollary}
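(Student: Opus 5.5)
The plan is to apply Corollary~\ref{cor:substatic-Alexandrov} with $V=V_a$, $X=X_a$, $Y=\sin R\,Y_a$, and then show that a connected totally umbilical hypersurface of constant mean curvature in $\Sph^{n+1}$ lies in a geodesic sphere.

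\emph{Checking the hypotheses.} The triple $(\Sph^{n+1},g,V_a)$ is static, and $V_a>0$ on $\overline\Omega$ by assumption. On $S_R$ we have $\mathcal B^{S_R}_{V_a}=0$, so $T$ is $V_a$-convex. By \eqref{eq:spherical-field-identities} and \eqref{eq:spherical-boundary-fields}:
\begin{itemize}
\item $X_a$ is conformal Killing with factor $V_a$ and satisfies $g(X_a,N)=0$ on $T$;
\item $Y:=\sin R\,Y_a$ is Killing and satisfies $g(Y,N)=V_a$ on $T$.
\end{itemize}
All of these fields are smooth on a neighbourhood of $\overline\Omega$, because $V_0$ and $V_a$ are global functions on $\Sph^{n+1}$ (restrictions of linear functions). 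Since $H_\Sigma=h_0$ is constant and $H_\Sigma>0$ by Corollary~\ref{cor:sphere}, Corollary~\ref{cor:substatic-Alexandrov} applies. It gives that $\Sigma$ is totally umbilical, for $\theta<\pi/2$ through Theorem~\ref{thm:substatic-HK} and for $\theta=\pi/2$ through Corollary~\ref{cor:free-boundary}.

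\emph{From umbilicity to a sphere.} Write $h^\Sigma=\lambda g_\Sigma$ with $\lambda=h_0/n$ constant. I would argue in the Euclidean model $\Sph^{n+1}\subset\R^{n+2}$, with position vector $x$. The vector $\nu$ together with $x$ spans the normal bundle of $\Sigma$ in $\R^{n+2}$. Then for every tangent vector $Z$,
\[
\bar D_Z(\nu+\lambda x)=\lambda Z-\lambda Z=0,
\]
where the sign of $\lambda$ depends on the orientation convention. Hence $\nu+\lambda x=e$ is a constant vector on the connected set $\Sigma$. It follows that $\langle x,e\rangle=\lambda$ is constant along $\Sigma$, so $\Sigma$ lies in the intersection of $\Sph^{n+1}$ with a hyperplane.

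This intersection is a geodesic sphere, since $\lambda=h_0/n>0$ forces $e\neq0$; I still need to verify this, and the hypothesis $n\ge2$ is not otherwise used. Alternatively, Codazzi alone shows that $\lambda$ is constant when $n\ge2$, and that is why $n\ge2$ appears in the statement. We already know $\lambda$ is constant because $H_\Sigma$ is constant.

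\emph{Expected obstacle.} The main difficulty is making sure the identities in \eqref{eq:spherical-boundary-fields} really hold on all of $T\subset S_R$. In particular, $g(X_a,N)=0$ needs
\[
g(\bar\nabla V_a,\partial_r)=\cos R\,\bigl(V_0(V_a)_r-V_a(V_0)_r\bigr),
\]
which holds at $r=R$ because $(V_a)_r=\cot R\,V_a$ and $(V_0)_r=-\sin R$. I would also confirm the orientation conventions, so that umbilicity with $h_0>0$ gives a small sphere with the correct side. I expect this bookkeeping to be the only real work.
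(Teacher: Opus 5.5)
Your proposal is correct and follows the paper's route. The paper gets umbilicity exactly by feeding $V_a$, $X_a$ and $\sin R\,Y_a$ into Corollary~\ref{cor:substatic-Alexandrov} via \eqref{eq:spherical-field-identities} and \eqref{eq:spherical-boundary-fields}, and it leaves the inclusion in a geodesic sphere to the standard classification of connected umbilical hypersurfaces, which your Euclidean-model argument supplies.

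One small correction to that argument concerns the sign. With the paper's convention $h^\Sigma(Z,W)=g(\bar\nabla_Z\nu,W)$, the constant vector is $e=\nu-\lambda x$. Also, $e\neq0$ does not come from $\lambda>0$: since $\nu\perp x$, we have $|e|^2=1+\lambda^2$ for any $\lambda$. Hence the hyperplane $\langle x,e\rangle=-\lambda$ always cuts $\Sph^{n+1}$ in a round $n$-sphere.
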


\begin{remark}\label{rem:hyperbolic-application}
In $\HH^{n+1}$, take instead
\[
V_0=\cosh r,\qquad V_a=\sinh r\,g_O(a,\omega),\qquad
Y_a=V_0\bar\nabla V_a-V_a\bar\nabla V_0,
\]
and, for $R>0$, set
\[
X_a=\bar\nabla V_a-\cosh R\,Y_a,\qquad U_a=-\sinh R\,V_0.
\]
Now $\bar\nabla^2V=Vg$ and $\mathcal Q_V=0$. On $S_R=\{r=R\}$,
we have $h^{S_R}=\coth R\,g_{S_R}$ and
$(V_a)_N=\coth R\,V_a$. The field identities
\eqref{eq:spherical-field-identities} and
\eqref{eq:spherical-boundary-fields} hold with $\sin R$ replaced by
$\sinh R$. Hence the same applications, on domains contained in
$\{r\leq R,\ V_a>0\}$, recover the capillary Heintze--Karcher inequality
and the CMC umbilicity conclusion of \cite[Section~6]{ChenPyo2026}.
The formulas \eqref{eq:spherical-final} and \eqref{eq:spherical-CMC-HK}
hold with these potentials and fields and with $\sin R$ replaced by
$\sinh R$.
\end{remark}

\section*{Acknowledgements}
The author was partially supported by the Brazilian National Council for
Scientific and Technological Development (CNPq), grants 409513/2023-7 and
406078/2025-4.

\section*{AI disclosure}
Large language models were used for copy-editing and for proofreading the
exposition. They were also used to suggest the notation adopted in the proof
of Theorem~\ref{thm:substatic-HK}, which considerably improved its
presentation.

\printbibliography

\end{document}